\documentclass[11pt,twoside]{amsart}
\usepackage{amssymb,amsmath}
\usepackage{amscd}
\usepackage{amsthm}
\usepackage{latexsym}
\usepackage[noadjust]{cite}
\usepackage{mathrsfs}
\usepackage{indentfirst}
\usepackage{graphicx}
\usepackage{graphicx}
\usepackage{subfigure}
\usepackage{color}
\usepackage{enumitem}
\allowdisplaybreaks[4]

\def\dfrac{\displaystyle\frac}

\numberwithin{equation}{section}

\newtheorem{theorem}{Theorem}[section]
\newtheorem{lemma}[theorem]{Lemma}
\newtheorem{proposition}[theorem]{Proposition}

\newtheorem{corollary}[theorem]{Corollary}

\usepackage[colorlinks=true,
linkcolor=blue,      
anchorcolor=red,  
citecolor=blue,        
linktocpage,pdfpagelabels,
bookmarksnumbered,bookmarksopen]{hyperref}
\usepackage[hyperpageref]{backref}

\makeatletter
\newcommand{\itemcustom}[2]{%
	\refstepcounter{enumi}%
	\item[#1]%
	\protected@edef\@currentlabel{#1}%
	\label{#2}%
}
\makeatother

\makeatletter
\providecommand\@dotsep{5}
\def\listtodoname{List of Todos}
\def\listoftodos{\@starttoc{tdo}\listtodoname}
\makeatother

\newcommand{\vp}{\varphi}

\newcommand{\eps}{\varepsilon}

\def\N{\mathbb{N}}
\def\R{\mathbb{R}}

\def\Qc{\mathcal{Q}}

\def\C{\mathcal{C}}

\def\RN{\mathbb{R}^N}

\def\n{\nabla}

\def\a{\alpha}

\def\l{\lambda}

\def\t{\theta}
\def\irn{\int_{\RN}}

\newcommand{\weakto}{\rightharpoonup}
\newcommand{\nequiv}{\not\equiv}

\newcommand{\cC}{{\mathcal C}}

\newcommand{\cF}{{\mathcal F}}

\newcommand{\cH}{{\mathcal H}}
\newcommand{\cI}{{\mathcal I}}

\newcommand{\cM}{{\mathcal M}}
\newcommand{\cN}{{\mathcal N}}

\newcommand{\cP}{{\mathcal P}}
\newcommand{\cQ}{{\mathcal Q}}

\title[Ground state solutions to Born-Infeld-Choquard problem]{Ground state solutions to Born-Infeld-Choquard problem}

\author[J. Mederski]{Jaros\l aw Mederski}
\author[X. Zeng]{
	Xiangjian Zeng}

\address{J. Mederski \hfill\break  
	\newline\indent Institute of Mathematics,
	\newline\indent Polish Academy of Sciences  
	\newline\indent ul. \'Sniadeckich 8, 00-656 Warsaw, Poland
	\newline\indent and
	\newline\indent 
	Faculty of Mathematics and Computer Science,		\newline\indent 
	Nicolas Copernicus University, \newline\indent ul. Chopina 12/18,		 87-100 Toruń, Poland
}
\email{\href{mailto:jmederski@impan.pl}{jmederski@impan.pl}}

\address{X. Zeng
	\hfill\break  \newline\indent
	\newline\indent Institute of Mathematics,
	\newline\indent Polish Academy of Sciences  
	\newline\indent ul. \'Sniadeckich 8, 00-656 Warsaw, Poland
	\newline\indent and
	\newline\indent Faculty of Mathematics, Informatics and Mechanics 
	\newline\indent University of Warsaw 
	\newline\indent ul. Banacha 2, 02-097 Warsaw, POLAND}
\email{\href{mailto:xzeng@impan.pl}{xzeng@impan.pl}}

\subjclass[2020]{35J93, 35J20, 35R09, 35B38, 49J52.}
\keywords{Born-Infeld-Choquard equation, nonlocal nonliearity, nonsmooth analysis, ground state solutions, radial symmetry}

\begin{document}
	\begin{abstract} 
		In this paper, we investigate the existence and qualitative properties of ground state solutions for the nonlocal Born-Infeld-Choquard problem 
		\begin{equation*}
			\begin{cases}
				-{\rm div}\left(\dfrac{\n u}{\sqrt{1-|\n u|^2}}\right)+ \omega u=\big(I_\alpha\ast |u|^{p}\big)|u|^{p-2}u, & \hbox{in }\RN,\; N\geq 3,
				\\[5mm]
				u(x)\to 0, &\hbox{as }|x|\to +\infty.
			\end{cases}
		\end{equation*}
		where $p>\frac{N+\a}{N}$, $\omega=0,1$ and $0<\a<N$.
		The equation is driven by the mean curvature operator in Lorentz-Minkowski space, motivated by the Born-Infeld nonlinear electromagnetic theory, and is coupled with a Choquard-type nonlocal nonlinearity. Due to the inherent relativistic gradient constraint $|\nabla u| \le 1$, the associated energy functional lacks standard $\mathcal{C}^1$ regularity, preventing the direct use of classical variational techniques.  We employ a non-smooth critical point theory on appropriate Pohožaev-type manifold to establish the existence of ground state solutions. Such non-smooth critical point theorem is abstract and we further show that it can be employed for strongly indefinite problem as well.  We also demonstrate that these solutions  are radially symmetric, and monotonously decay to zero at infinity. 
		
	\end{abstract}
	\maketitle

	\section{Introduction}

	To resolve the infinite energy problem of point charges in classical Maxwell theory, Born and Infeld \cite{Bnat,B,BInat,BI} introduced a nonlinear electrodynamics model. By inherently restricting the maximum field strength, their theory ensures that the electromagnetic field generated by a point charge has finite energy. Mathematically, this framework is governed by the Born-Infeld operator:
	\begin{equation*}
		\mathcal{Q}(u)=-{\rm div}\left(\frac{\nabla u}{\sqrt{1-|\nabla u|^2}}\right).
	\end{equation*}
	This operator naturally enforces the physical field bound via the mathematical condition $|\nabla u| < 1$. Furthermore, it holds profound geometric significance in classical relativity, where it represents the mean curvature operator in Lorentz-Minkowski space \cite{BS,CY}.
	In last years many authors focused their attention on problems related to  $\Qc$ in the whole $\RN$, with $N\ge 1$. In particular, some results for
	$\mathcal{Q}(u)=\rho$ in $\RN$
	can be found in \cite{BCF,BDP,BDPR,Bon-Iac2,Bon-Iac3,H,K,K-corr,DMT_AMS}, under different assumptions on $\rho$, where $\rho$ can be considered as a prescribed charge source.

	Much less is known, however, in the presence of a nonlinearity, and the case of a nonlocal nonlinearity seems to remain open. For equations of local type,
	\begin{equation}\label{cong}
		-{\rm div}\left(\dfrac{\nabla u}{\sqrt{1-|\nabla u|^2}}\right)=g(u), \qquad \hbox{in } \mathbb{R}^N,
	\end{equation}
	where $g$ is a nonlinear function of $u$, one of the first papers dealing with this kind of problem by variational methods is \cite{BDD}. In that work, Bonheure, Derlet and De Coster considered the case $g(s)=|s|^{p-2}s$, with $p>2^*=\frac{2N}{N-2}$ and $N\ge 3$. By means of suitable truncation arguments and ODE methods in the radial setting, they obtained the existence of finite-energy solutions. We also mention \cite{A,A2,P,BCP,MedPom,BalMedPom}, where \eqref{cong} was studied again by means of ODE techniques in the radial setting.
	
	Later, in \cite{BIMM}, Byeon et al. developed a monotonicity trick for nonsmooth functionals, inspired by the framework introduced in \cite{Szu}. They proved the existence of infinitely many radial solutions of \eqref{eq}, as well as  nonradial solutions when $N=4$ or $N\geq 6$, see also \cite{BIMM2024}. More recently, an approach not relying on symmetry assumptions was developed in \cite{BIM}, yielding the existence of a radial ground state solution and nonradial solutions for all $N\geq 4$.
	
	If the local nonlinearity in \eqref{cong} is replaced by a nonlocal one, the resulting problem becomes the so-called \emph{Born--Infeld--Choquard} problem, which reads as follows:
	\begin{equation}\label{eq}
		\begin{cases}
			-{\rm div}\left(\dfrac{\nabla u}{\sqrt{1-|\nabla u|^2}}\right)+ \omega u
			=\big(I_\alpha\ast |u|^{p}\big)|u|^{p-2}u, & \hbox{in } \mathbb{R}^N,\; N\geq 3,
			\\[5mm]
			u(x)\to 0, & \hbox{as } |x|\to +\infty,
		\end{cases}
	\end{equation}
	where $p>\frac{N+\alpha}{N}$, $0<\alpha<N$ and $I_\alpha(x):=\frac{C_{N,\alpha}}{|x|^{N-\alpha}}$ is the Riesz potential.
	Here, we assume that $\omega\in\{0,1\}$: the case $\omega=1$ is called the \emph{positive-mass case}, while the case $\omega=0$ is called the \emph{zero-mass case}.
	
	If the Born--Infeld operator in \eqref{eq} is replaced by the Laplacian, one obtains the well-known Choquard equation, also called the Choquard--Pekar equation in the case $N=3$, $\omega=1$, $\alpha=2$, and $p=2$:
	\begin{equation*}
		-\Delta u+u = \big(I_\alpha\ast |u|^p\big)|u|^{p-2}u\qquad \hbox{in } \mathbb{R}^N.
	\end{equation*}
	This equation arises, for instance, as an approximation to the Hartree--Fock theory of plasma \cite{Pekar,Lieb}. 
	A variational approach to this case was presented by Lieb \cite{Lieb} and Lions \cite{Lions}.
	More generally, for $N\geq 3$ and
	$
	\frac{N+\alpha}{N}<p<\frac{N+\alpha}{N-2},
	$
	Moroz and Van Schaftingen \cite{MVJFA,MVTAMS} investigated the existence, regularity, and qualitative properties of ground state solutions to the generalized Choquard problem, see also \cite{MVJFPTA} and references therein.
	
	Throughout this paper, we assume that $N\geq 3$ and $0<\alpha<N$. Unlike problem \eqref{cong}, however, problem \eqref{eq} cannot be treated in a straightforward way by using the variational methods available in the literature. More precisely, equation \eqref{eq} is, at least formally, the Euler--Lagrange equation associated with the energy functional
	\begin{equation*}
		\mathcal{I}(u):=
		\int_{\mathbb{R}^N}\left(1-\sqrt{1-|\nabla u|^2}\right)\,dx
		+\frac{\omega}{2}\int_{\mathbb{R}^N}|u|^2\,dx
		-\frac{1}{2p}\int_{\mathbb{R}^N}\big(I_\alpha\ast |u|^{p}\big)|u|^{p}\,dx.
	\end{equation*}
	However, the gradient part of this functional, namely
	\begin{equation*}
		\Psi_0(u):= 
		\int_{\mathbb{R}^N}\left(1-\sqrt{1-|\nabla u|^2}\right)\,dx,
	\end{equation*}
	is well defined only under the constraint $|\nabla u|\leq 1$ a.e. in $\mathbb{R}^N$. Consequently, the associated energy functional is not of class $\mathcal{C}^1$ on its natural domain.
	
	This lack of smoothness prevents the direct application of classical variational techniques. Indeed, the presence of the constraint $|\nabla u|\leq 1$ a.e. in $\mathbb{R}^N$, together with the singular behavior of the integrand on the set where $|\nabla u|=1$, requires different and nonstandard strategies.
	
	Moreover, the  presence non-local nonlinearity makes the variational framework more delicate than in the local case e.g. \cite{BIMM,BIM}.
	
	We define the functional space $X_\omega$ by 
	\begin{equation*}
		X_\omega:=\overline{\cC_{0}^{\infty}(\RN)}^{\lVert\cdot\rVert_{X_\omega}},\quad\text{where }\lVert u\rVert_{X_\omega}:=\sqrt{\omega\lVert u\rVert_{2}^2+\lVert \nabla u\rVert_{2}^2+\lVert \nabla u\rVert_{r}^2}.
	\end{equation*}
	with $r>\max\{2^*,N\}$ fixed. Here we denote the $L^{p}$-norm by $\lVert\cdot\rVert_{p}$ and $2^*=\frac{2N}{N-2}$. In view of the singularity, we define the {\em effective domain} of $\Psi_0$, and hence of $\mathcal{I}$, as 
	\begin{equation*}
		D(\Psi_0):= \{u\in X_\omega: \Psi_0(u)<\infty\}=\{u\in X_\omega: |\n u| \leq 1\text{ a.e. on }\RN\}.
	\end{equation*}
	Although $\mathcal{I}$ depends on $\omega$, we omit the notation $\mathcal{I}_\omega$ whenever this does not lead to confusion.
	
	Our aim is to investigate the {\em ground state solutions} to problem \eqref{eq}, that is, the nontrivial least energy solutions to \eqref{eq}. To make this notion precise, we need a preliminary illustration. 
	Let
	\begin{equation}\label{nonlocal functional}
		\cF_{\a,p}(u):= \irn\big(I_\alpha\ast |u|^{p}\big)|u|^{p}  \,dx,\quad\hbox{ for }u\in X_\omega.
	\end{equation}
	Then we introduce a {\em Pohožaev functional} related to our problem:
	\begin{equation}\label{Pohozaev functional}
		\begin{aligned}
			\cP_\omega(u):&=  N\int_{\mathbb{R}^N} 1 - \sqrt{1 - |\nabla u|^2}\, dx + \frac{N+2}{2}\omega\irn |u|^2\, dx - \frac{N+\alpha+2p}{2p} \irn\big(I_\alpha\ast |u|^{p}\big)|u|^{p}  \,dx\\
			& =N\Psi_{0}(u) +\frac{N+2}{2}\omega\lVert u\rVert_{2}^2 - \frac{N+\alpha+2p}{2p}\cF_{\a,p}(u)
		\end{aligned}
	\end{equation}
	and the {\em Pohožaev set}
	\begin{equation*}
		\cM_\omega:=\{ u\in D(\Psi_{0})\backslash\{0\} : \cP_\omega(u)=0 \}.
	\end{equation*}
	Therefore, we now define the {\em ground state level } by
	\begin{equation*}
		c_\omega:= \inf_{u\in \cM_\omega} \cI (u).
	\end{equation*}
	If $c_\omega$ can be achieved by some $u_\omega$ in $\cM_\omega$, then we will show in Theorems \ref{Thm 1} and \ref{thm 0mass&sobolev ineq} that it is indeed a solution to \eqref{eq} and will be called a {\em ground state solution} to \eqref{eq}. As we shall see, a ground state solution has the least energy among all nontrivial solutions of \eqref{eq}, which belong to $\mathcal{M}_\omega$; see Lemma \ref{Pohozaev lemma} below.
	
	Here, we emphasize that, although the functional $\mathcal{I}$ is not well defined on the whole space $X_\omega$, it can still be treated within the nonsmooth variational framework due to Szulkin \cite{Szu}; see also \cite{BIMM}. In this setting, critical points in the convex sense are shown to be weak solutions of \eqref{eq}. It follows from \cite[Proposition 2.10]{BIMM} for the linear problem involving $\cQ$.
	
	More precisely, let $\Psi:X_\omega\to (-\infty,+\infty]$ be a lower semicontinuous convex functional and let $\Phi\in \mathcal{C}^1(X_\omega,\mathbb{R})$. We say that $u\in X_\omega$ is a {\em critical point} of $\Psi-\Phi$ if
	\begin{equation}\label{critical point}
		\Psi(v)-\Psi(u)-\Phi'(u)(v-u) \geq 0
		\qquad \text{for all } v\in X_\omega.
	\end{equation}
	In the case of \eqref{eq}, we take
	\begin{equation*}
		\Phi(u):= \frac{1}{2p}\mathcal{F}_{\alpha,p}(u) - \frac{\omega}{2}\lVert u\rVert_{2}^{2},
		\qquad
		\Psi(u):=\Psi_0(u).
	\end{equation*}
	We shall see that the functional $\mathcal{F}_{\alpha,p}$, and hence $\Phi$, is of class $\mathcal{C}^1$ on the underlying functional space, see Proposition \ref{differentiable of nonlocal}.
	
	Our first main result in the positive mass case is stated as follows.
	\begin{theorem}\label{Thm 1}
		Let $p>\frac{N+\alpha}{N}$. Then every minimizing sequence $(u_n)_{n\in \mathbb{N}}$ for $c_1$ admits, up to translations, a subsequence that converges strongly to some $u_1\in \mathcal{M}_1$. Moreover, $u_1$ is a  ground state solution to \eqref{eq} with $\omega=1$.
	\end{theorem}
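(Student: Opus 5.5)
The plan has four steps: fiber scaling onto $\cM_1$, boundedness and nonvanishing of a minimizing sequence, a splitting argument that upgrades weak to strong convergence, and a nonsmooth deformation argument showing the minimizer is a critical point.

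\textbf{Step 1: fiber map.} For $u\in D(\Psi_0)\setminus\{0\}$ consider the dilations $u_t(x):=t\,u(x/t)$ with $t>0$. Then $|\n u_t(x)|=|\n u(x/t)|\le 1$, so $u_t\in D(\Psi_0)$, and
\[
\cI(u_t)=t^N\Psi_0(u)+\frac{t^{N+2}}{2}\|u\|_2^2-\frac{t^{N+\a+2p}}{2p}\cF_{\a,p}(u).
\]
Since $N<N+2<N+\a+2p$ (here $p>\frac{N+\a}{N}>1$), the function $t\mapsto \cI(u_t)$ has a unique critical point $t_u$, which is a global maximum. Moreover, $\frac{d}{dt}\cI(u_t)\big|_{t=1}=\cP_1(u)$. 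Hence each ray meets $\cM_1$ exactly once, and
\[
c_1=\inf_{u\neq0}\max_{t>0}\cI(u_t).
\]
On $\cM_1$ we have
\[
\cI(u)=\cI(u)-\tfrac{1}{N+\a+2p}\cP_1(u)=a\Psi_0(u)+b\|u\|_2^2
\]
with $a,b>0$. This identity gives boundedness of $\Psi_0(u_n)$ and $\|u_n\|_2$ along a minimizing sequence. Since $\Psi_0(u)\ge \frac12\|\n u\|_2^2$ and $|\n u_n|\le1$, it also gives boundedness in $X_1$, because $\|\n u\|_r^r\le\|\n u\|_2^2$. Next, the Hardy--Littlewood--Sobolev inequality combined with Gagliardo--Nirenberg, using $L^\infty$ bounds available from $|\n u|\le 1$ together with $L^2$, yields
\[
\cF_{\a,p}(u)\le C\|u\|_{X_1}^{2p}
\]
and, on $\cM_1$, a lower bound $\|u\|\ge\delta>0$. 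This gives $c_1>0$ and $\liminf\cF_{\a,p}(u_n)>0$.

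\textbf{Step 2: nonvanishing and translation.} If $\sup_y\int_{B_1(y)}u_n^2\to0$, then Lions' lemma gives $u_n\to0$ in $L^q$ for $2<q<\infty$ (here $u_n$ is bounded in $W^{1,\infty}\cap H^1$). Hence $\cF_{\a,p}(u_n)\to0$, since $2Np/(N+\a)\in(2,\infty)$, a contradiction. So after translations $u_n\weakto u\neq0$ in $X_1$, locally uniformly by Arzelà--Ascoli, with $|\n u|\le1$.

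\textbf{Step 3: strong convergence (main obstacle).} I would use a Brezis--Lieb splitting for $\cF_{\a,p}$ (nonlocal version, as in Moroz--Van Schaftingen) and weak lower semicontinuity of the convex $\Psi_0$ and of $\|\cdot\|_2$. I expect $\Psi_0$ to be the main obstacle: it is not a norm, so no Brezis--Lieb identity is available for it. Instead I would argue as follows.
\begin{itemize}[label=$\bullet$]
\item First show $\cP_1(u)\le0$. Suppose $\cP_1(u)>0$. Using the splitting $v_n=u_n-u$, this would force $\cP_1(v_n)<0$ asymptotically. Then the scaling $t_{v_n}<1$ gives
\[
c_1\le\cI((v_n)_{t_{v_n}})\le a\Psi_0(v_n)+b\|v_n\|_2^2+o(1).
\]
\item To compare $\Psi_0(v_n)$ with $\Psi_0(u_n)-\Psi_0(u)$, I would use the pointwise superadditivity-type inequality for $f(\xi)=1-\sqrt{1-|\xi|^2}$ (which is convex with $f(0)=0$) on a localization where $\n u$ is small, i.e. outside large balls, together with the local strong convergence of gradients. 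Establishing this comparison is the hardest step.
\item Once $\cP_1(u)\le0$, there is $t_u\le1$ with $u_{t_u}\in\cM_1$. Then
\[
c_1\le a t_u^N\Psi_0(u)+bt_u^{N+2}\|u\|_2^2\le\liminf\big(a\Psi_0(u_n)+b\|u_n\|_2^2\big)=c_1.
\]
This forces $t_u=1$ and convergence of $\|u_n\|_2$ and $\Psi_0(u_n)$. Uniform convexity of $f$ on compacts together with $\|\n u_n\|_2\to\|\n u\|_2$ then gives strong convergence in $X_1$; the $L^r$ part is handled by interpolation with $|\n u_n|\le1$.
\end{itemize}

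\textbf{Step 4: $u_1$ is a critical point.} I would show that the minimizer $u_1$ satisfies \eqref{critical point}, using a nonsmooth deformation/Lagrange-multiplier argument on $\cM_1$ based on Szulkin's framework. Suppose \eqref{critical point} fails for some $v\in D(\Psi_0)$. Consider the convex combinations $w_s=u_1+s(v-u_1)$, which stay in $D(\Psi_0)$, rescaled by $t_{w_s}$; the map $s\mapsto t_{w_s}$ is continuous and $t_{w_0}=1$. By convexity of $\Psi_0$ and $C^1$ regularity of $\Phi$ (Proposition~\ref{differentiable of nonlocal}), for small $s>0$,
\[
\cI\big((w_s)_{t_{w_s}}\big)\le \cI\big((u_1)_{t_{w_s}}\big)+s\big(\Psi(v)-\Psi(u_1)-\Phi'(u_1)(v-u_1)\big)+o(s)<c_1,
\]
using that $\cI((u_1)_t)\le\cI(u_1)$ for all $t$. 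This contradicts the definition of $c_1$. Hence $u_1$ is a critical point, and therefore a weak solution by \cite[Proposition 2.10]{BIMM}.
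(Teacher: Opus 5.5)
Steps 1 and 2 are fine. The last bullet of Step 3 coincides with the paper once you have a nonzero weak limit $u\in D(\Psi_0)$ of translates with $\cP_1(u)\le 0$. The gap is in how you produce such a $u$: you apply the fiber projection and a Br\'ezis--Lieb-type comparison to $v_n=u_n-u$, and both fail.

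First, $v_n$ need not lie in $D(\Psi_0)$. You only know $|\nabla v_n|\le 2$, so $\Psi_0(v_n)$ may be $+\infty$. Since $t\,v(\cdot/t)$ has the same gradient range as $v$, the quantities $\cP_1(v_n)$, $t_{v_n}$ and $\cI((v_n)_{t_{v_n}})$ are then meaningless.

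Second, you need $\Psi_0(u_n)\ge\Psi_0(u)+\Psi_0(v_n)+o(1)$, both to get $\cP_1(v_n)<0$ and to get $a\Psi_0(v_n)+b\|v_n\|_2^2\le c_1-a\Psi_0(u)-b\|u\|_2^2+o(1)<c_1$. This inequality is false under weak convergence, even when $v_n\in D(\Psi_0)$. Suppose that on some region $\nabla u\equiv 0.1e_1$, while $\nabla u_n$ is a fine laminate taking the values $0.15e_1$ and $-0.85e_1$ on volume fractions $0.95$ and $0.05$. Then $\nabla v_n\in\{0.05e_1,-0.95e_1\}$ and $\nabla v_n\weakto 0$. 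With $f(\xi)=1-\sqrt{1-|\xi|^2}$, the average of $f(\nabla u_n)-f(\nabla v_n)$ there is negative (about $-0.001$), whereas $f(\nabla u)\approx 0.005$.

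Superadditivity would require a.e.\ convergence of $\nabla u_n$. The local strong convergence of gradients you invoke has no source: a minimizing sequence on $\cM_1$ satisfies no equation, and Arzel\`a--Ascoli only gives local uniform convergence of $u_n$. Obtaining it would essentially be the strong convergence you are trying to prove.

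The missing idea is that $\Psi_0$ never has to be evaluated on a remainder. The paper extracts \emph{all} profiles (Lemma~\ref{profile decomposition}): $u_n(\cdot+y_n^i)\weakto\tilde u_i$ with $|y_n^i-y_n^j|\to\infty$. For the local part it uses only the lower semicontinuous splitting $\liminf_n\big[N\Psi_0(u_n)+\frac{N+2}{2}\|u_n\|_2^2\big]\ge\sum_i\big[N\Psi_0(\tilde u_i)+\frac{N+2}{2}\|\tilde u_i\|_2^2\big]$. This comes from restricting to the disjoint balls $B(y_n^i,R)$ and using weak lower semicontinuity of the convex integrand on each ball.

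An exact splitting $\cF_{\a,p}(u_n)\to\sum_i\cF_{\a,p}(\tilde u_i)$ is needed only for the nonlocal term. There Br\'ezis--Lieb requires just a.e.\ convergence of $u_n$, and the vanishing lemma disposes of the final remainder. Combined with $\cP_1(u_n)=0$, this gives $\sum_i\cP_1(\tilde u_i)\le 0$, so some profile satisfies $\cP_1(\tilde u_j)\le 0$. Being a weak limit in the weakly closed convex set $D(\Psi_0)$, $\tilde u_j$ can legitimately be projected onto $\cM_1$, and your last bullet then concludes exactly as in the paper. With only one extracted profile you cannot exclude $\cP_1(u)>0$ without information on the remainder; iterating the extraction is what supplies it.

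Your Step 4, by contrast, is correct and shorter than the paper's route through Lemma~\ref{abstract critical point}. You compare $\cI((w_s)_{t_s})$ with $\cI((u_1)_{t_s})$ at the \emph{same} dilation parameter. Convexity of $\Psi_0$ and the explicit homogeneities then give a bound $s\,\beta(t_s)+o(s)$ with $\beta$ continuous and $\beta(1)<0$. You only need $t_s\to1$ and $\cI((u_1)_t)\le\cI(u_1)$, so no control of $(1-t_s)/s$, and hence no condition (A3), is required.
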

	
	Next, we show the qualitative properties of the ground state solutions to \eqref{eq}.
	
	\begin{theorem}\label{thm of qualitative}
		Let $p>\frac{N+\a}{N}$. If $u$ is a ground state solution to \eqref{eq} with $\omega=1$, then either $u>0$ or $u<0$ on $\RN$. Moreover, it is radially symmetric and decays monotonically to zero, i.e., there is a single-variable function $v:(0,+\infty)\to\R$ such that $u(x)= v(|x-x_0|)$ for some $x_0\in \RN$ and $v(r)\to 0$ monotonously as $r\to +\infty$.
	\end{theorem}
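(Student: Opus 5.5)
We argue through the variational characterization of $u$ as a minimizer of $\cI$ on $\cM_1$, and then apply the Moroz--Van Schaftingen symmetrization argument for Choquard ground states. Fix a ground state $u\in\cM_1$ with $\cI(u)=c_1$. First we note how the fibering map $t\mapsto u(\cdot/t)$ (or the scaling used to project onto $\cM_1$) acts on the three terms $\Psi_0$, $\|\cdot\|_2^2$ and $\cF_{\a,p}$. For any $v\in D(\Psi_0)\setminus\{0\}$ with $\Psi_0(v)\le\Psi_0(u)$, $\|v\|_2\le\|u\|_2$ and $\cF_{\a,p}(v)\ge\cF_{\a,p}(u)$, this projection gives $\max_t \cI(v_t)\le \max_t\cI(u_t)=c_1$. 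The last equality holds because $u$ maximizes its own fiber, since the Pohožaev identity is the fiber's critical point condition. Hence $v_t\in\cM_1$ attains $c_1$. The inequality is strict if $\cF_{\a,p}(v)>\cF_{\a,p}(u)$.

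\emph{Sign.} We take $v=|u|$. Then $|\n|u||=|\n u|$ a.e., so $|u|\in D(\Psi_0)$, and all three functionals are unchanged. So $|u|$ is also a ground state, hence a critical point in the sense of \eqref{critical point}, and therefore a weak solution of \eqref{eq} by Theorem~\ref{Thm 1}. By regularity of solutions of $\cQ$ (the solution is $\mathcal{C}^{1}$ or better, and $\|\n u\|_\infty<1$ on compacts, using the results of \cite{BIMM,BIM} quoted in the paper), the equation for $w=|u|\ge0$ is locally uniformly elliptic:
\[
-{\rm div}(a(x)\n w)+w=(I_\a\ast w^p)w^{p-1}\ge0 .
\]
The strong maximum principle then gives $|u|>0$. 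Since $u$ is continuous and never vanishes, $u$ has a constant sign.

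\emph{Symmetry.} Assume $u>0$ and let $u^*$ be its Schwarz symmetrization. Pólya--Szegő for convex integrands gives $\Psi_0(u^*)\le\Psi_0(u)$ and $|\n u^*|\le1$ a.e., so $u^*\in D(\Psi_0)$. Also $\|u^*\|_2=\|u\|_2$, and Riesz's rearrangement inequality gives $\cF_{\a,p}(u^*)\ge\cF_{\a,p}(u)$. By the first paragraph, $u^*$, after projection, is a minimizer. This forces $\cF_{\a,p}(u^*)=\cF_{\a,p}(u)$, since otherwise the energy would drop below $c_1$. The equality case of Riesz's inequality, with the strictly symmetric decreasing kernel $I_\a$ (Lieb's theorem), then gives $u^p=(u^*)^p(\cdot-x_0)$, that is, $u=u^*(\cdot-x_0)$. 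Thus $u$ is radial about $x_0$ and nonincreasing in $r$. It decays to zero because of the boundary condition, or because $u\in L^2$ is radially monotone.

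\emph{Main obstacle.} The hard step is the projection in the first paragraph. Because $\Psi_0$ is not homogeneous under dilation, we must check precisely that $\sup_t\cI(v_t)$ is attained at the unique $t$ with $v_t\in\cM_1$, and that this supremum is monotone in the data $(\Psi_0,\|\cdot\|_2^2,\cF_{\a,p})$. Under $v_t=v(\cdot/t)$ the terms scale as $t^N\Psi_0(v)$, $t^N\|v\|_2^2$ and $t^{N+\a}\cF_{\a,p}(v)$, and the map keeps $|\n v_t|\le1$ only for $t\ge1$. So we would instead use the combined scaling $t\,v(\cdot/t)$, which preserves the gradient bound and matches the exponents $N$, $N+2$, $N+\a+2p$ in \eqref{Pohozaev functional}. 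With it, $\cI(t v(\cdot/t))=t^N\Psi_0(v)+\frac{t^{N+2}}2\|v\|_2^2-\frac{t^{N+\a+2p}}{2p}\cF_{\a,p}(v)$, whose derivative at $t=1$ is exactly $\cP_1(v)$. Uniqueness of the critical point of this one-variable function follows from $N<N+2<N+\a+2p$. Monotonicity in the three coefficients is then immediate.
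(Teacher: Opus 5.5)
Your proof is correct. The sign part is the same as the paper's: $|u|$ has the same $\Psi_0$, the same $L^2$ norm and the same $\cF_{\a,p}$, so it is again a minimizer on $\cM_1$, hence a solution, and the strong maximum principle gives $|u|>0$.

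For the symmetry you take a genuinely different route. The paper never uses Schwarz symmetrization. It polarizes $u$ with respect to an arbitrary closed half-space $H$. Since $u_H$ is a pointwise rearrangement of $u$ and of $|\n u|$, one gets $\Psi_0(u_H)=\Psi_0(u)$ exactly, and the constraint $|\n u_H|\le 1$ is automatic. The polarization inequality for the Riesz kernel gives $\cF_{\a,p}(u_H)\ge\cF_{\a,p}(u)$, and projecting onto $\cM_1$ with $t_H<1$ rules out strict inequality. The paper then concludes with the two Moroz--Van Schaftingen lemmas: equality forces $u_H\in\{u,u\circ\sigma_H\}$, and this for every $H$ forces $u$ to be radially nonincreasing about some $x_0$.

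You replace the family of polarizations and the characterization lemma by a single symmetrization plus Lieb's strict form of Riesz's rearrangement inequality. That theorem applies because $I_\a$ is strictly symmetric decreasing and $\cF_{\a,p}(u)<\infty$.

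The price is the Pólya--Szegő inequality for the integrand $s\mapsto 1-\sqrt{1-s^2}$, which is $+\infty$ for $s>1$ and has unbounded slope at $s=1$. You also need that symmetrization does not increase the Lipschitz constant. To make this rigorous, use $\|\n u\|_\infty\le 1-\delta$ from \cite[Proposition 2.10]{BIMM}, as in Lemma \ref{Pohozaev lemma}. This lets you replace the integrand by a finite convex nondecreasing extension beyond $1-\delta$. You should also note that $u^*\in X_1$, since the $L^2$ and $L^r$ norms of the gradient do not increase.

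A nice feature of your argument is that equality is extracted only from the nonlocal term. You therefore never need the delicate equality case of Pólya--Szegő. Your pointwise-in-$t$ comparison $\cI(v_t)\le\cI(u_t)$, strict when $\cF_{\a,p}(v)>\cF_{\a,p}(u)$, is a slightly cleaner way to run the projection than the paper's $t_H<1$ computation, and the two are equivalent.
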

	
	We are also interested in the nonlocal version for the Sobolev-type inequality, which is a counterpart to the local one that has been studied in \cite{BIM,BDD}. Specifically, the following type of inequality 
	\begin{equation}\label{Sobolev-type inequality}
		\irn (1-\sqrt{1-|\n u|^2})\, dx \geq C_{p,\alpha}\left(\irn (I_{\a}*|u|^{p})|u|^{p}\, dx\right)^{\frac{N}{N+\a+2p}},
	\end{equation}
	will be investigated. More precisely, the best constant $C_{p,\alpha}$ can be characterized by the minimizer over a suitable Pohožaev constraint $\cM_0$, which one will see that it in fact can be attained. Moreover, such minimizer is a ground state solution to \eqref{eq} with $\omega=0$,
	where $p>\frac{N+\a}{N-2}$.

	Our last main result concerns the zero mass case.
	\begin{theorem}\label{thm 0mass&sobolev ineq}
		Let  $p>\frac{N+\alpha}{N-2}$. Then every minimizing sequence $(u_n)_{n\in \N}$ for $c_0$ has, up to translation, a convergent subsequence converging strongly to some $u_0$ in $\cM_0$, which is a classical ground state solution to \eqref{eq} with $\omega=0$. Moreover, it holds that
		\begin{enumerate}[label={\rm (\roman*)}, ref={\rm (\roman*)}]
			\item for any $u\in X_0$ the inequality \eqref{Sobolev-type inequality} holds,
			where $$C_{p,\alpha}= \frac{(N+\a+2p)c_{0}^{\frac{N+2p}{N+\a+2p}}}{(2Np)^{\frac{N}{N+\a+2p}}(\a+2p)^{\frac{\a+2p}{N+\a+2p}}}.$$
			\item  every optimizer $v$ has a form that $v(x)= tu(\frac{x}{t})$ with $t>0$, where $u$ is a minimizer for $c_0$. 
		\end{enumerate}
		
	\end{theorem}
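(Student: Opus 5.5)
The argument rests on the scaling $u_t(x):=t\,u(x/t)$, $t>0$. It preserves $D(\Psi_0)$ because $\nabla u_t(x)=(\nabla u)(x/t)$, so $|\nabla u_t|\le 1$ whenever $|\nabla u|\le 1$. It also gives
\[
\Psi_0(u_t)=t^N\Psi_0(u),\qquad \cF_{\a,p}(u_t)=t^{N+\a+2p}\cF_{\a,p}(u).
\]
With $\omega=0$, $\cP_0(u)=N\Psi_0(u)-\frac{N+\a+2p}{2p}\cF_{\a,p}(u)$ is exactly $t\frac{d}{dt}\cI(u_t)|_{t=1}$. For $u\neq0$ the map $t\mapsto \cI(u_t)=t^N\Psi_0(u)-\frac{t^{N+\a+2p}}{2p}\cF_{\a,p}(u)$ therefore has a unique critical point $t_u$, which is a strict maximum, and $u_{t_u}\in\cM_0$.

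On $\cM_0$ we have $\cF_{\a,p}=\frac{2Np}{N+\a+2p}\Psi_0$, hence $\cI=\frac{\a+2p}{N+\a+2p}\Psi_0$. Writing $A=\Psi_0(u)$ and $B=\cF_{\a,p}(u)$, we get $t_u^{\a+2p}=\frac{2NpA}{(N+\a+2p)B}$ and
\[
c_0\le \cI(u_{t_u})=\frac{\a+2p}{N+\a+2p}\,t_u^NA .
\]
Substituting $t_u$ and raising to the power $\frac{\a+2p}{N+\a+2p}$ should give (i) with exactly the stated constant $C_{p,\a}$; this is routine algebra. Equality in (i) holds iff $u_{t_u}$ attains $c_0$, which gives (ii) once $c_0$ is attained and positive.

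Existence. I first show $c_0>0$. For $u\in\cM_0$ a Hardy--Littlewood--Sobolev bound gives $\cF_{\a,p}(u)\lesssim\|u\|_{2Np/(N+\a)}^{2p}$. Since $p>\frac{N+\a}{N-2}$, the exponent $q=\frac{2Np}{N+\a}>2^*$. Using $|\nabla u|\le1$, interpolate $\|u\|_q$ between $\|u\|_{2^*}\lesssim\|\nabla u\|_2$ and $\|u\|_\infty$, bounding $\|u\|_\infty$ via Morrey/Gagliardo--Nirenberg with $\|\nabla u\|_r\le\|\nabla u\|_2^{2/r}$. Together with $\Psi_0(u)\ge\frac12\|\nabla u\|_2^2$, this gives $\Psi_0\lesssim\Psi_0^{\gamma}$ on $\cM_0$ for some $\gamma>1$, so $\Psi_0$ is bounded below on $\cM_0$.

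Next, a minimizing sequence is bounded in $X_0$, since $\|\nabla u_n\|_r^r\le\|\nabla u_n\|_2^2\lesssim\Psi_0(u_n)$ and $\Psi_0(u_n)$ is bounded. Because $\cF_{\a,p}(u_n)$ is bounded away from zero, a Lions-type lemma for $L^q$ with $2^*<q<\infty$ rules out vanishing, so after translations $u_n\weakto u_0\neq0$.

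Passing to the limit is the main obstacle: $\Psi_0$ is only weakly lower semicontinuous, and the nonlocal term needs a Brezis--Lieb splitting. I would apply the nonlocal Brezis--Lieb lemma for $\cF_{\a,p}$ together with a Brezis--Lieb-type inequality $\Psi_0(u_n)\ge\Psi_0(u_0)+\Psi_0(u_n-u_0)+o(1)$; if the latter is unavailable, convexity and lower semicontinuity alone may suffice. Setting $v_n=u_n-u_0$, the case $\cP_0(u_0)>0$ would force $\cP_0(v_n)<0$ asymptotically, so $t_{v_n}<1$ and the levels combine to exceed $c_0$, a contradiction. The case $\cP_0(u_0)<0$ gives $t_{u_0}<1$ and
\[
\cI(u_{0,t_{u_0}})<\tfrac{\a+2p}{N+\a+2p}\Psi_0(u_0)\le \liminf \cI(u_n)=c_0,
\]
again a contradiction. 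Hence $\cP_0(u_0)=0$, and $\Psi_0(u_0)\le c_0\frac{N+\a+2p}{\a+2p}$ with equality forces $\Psi_0(u_n)\to\Psi_0(u_0)$. Strong convergence of $\nabla u_n$ in $L^2$, and in $L^r$ by interpolation with the $L^\infty$ bound, follows from uniform convexity of $s\mapsto1-\sqrt{1-s^2}\ge s^2/2$.

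Finally, to see that $u_0$ is a critical point in the sense of \eqref{critical point}, I would adapt the paper's abstract nonsmooth minimization-on-Pohožaev-manifold result, presumably the one used for Theorem \ref{Thm 1}. Concretely, for $v\in D(\Psi_0)$, test the convex combination $w_s=u_0+s(v-u_0)$, rescale it onto $\cM_0$ by $t_{w_s}$ (with $t_{w_s}\to1$ as $s\to0$), and use minimality of $\cI$ on $\cM_0$ together with the maximality of $t\mapsto\cI(w_{s,t})$ at $t_{w_s}$. Dividing by $s$ and using convexity of $\Psi_0$ yields the variational inequality. Then \cite[Proposition 2.10]{BIMM} and elliptic regularity give a classical solution, and by Lemma \ref{Pohozaev lemma} all nontrivial solutions lie in $\cM_0$, so $u_0$ is a ground state.
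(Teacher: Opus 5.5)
Your existence argument has a genuine gap in the case $\cP_0(u_0)>0$, which is the actual dichotomy case. There you need $\Psi_0(v_n)$ and $t_{v_n}$ for $v_n=u_n-u_0$, together with the splitting $\Psi_0(u_n)\ge\Psi_0(u_0)+\Psi_0(v_n)+o(1)$. Neither is available.

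\begin{itemize}
\item The effective domain $D(\Psi_0)$ is not stable under subtraction. From $|\nabla u_n|\le1$ and $|\nabla u_0|\le1$ you only get $|\nabla v_n|\le 2$, and $|\nabla v_n|>1$ on a set of positive measure can happen. An example is where a far-away bump of $u_n$ has slope $1$ and $\nabla u_0\neq0$ points the other way. Then $\Psi_0(v_n)=+\infty$, $t_{v_n}$ is undefined, and your splitting inequality is false.
\item Even when $v_n\in D(\Psi_0)$, weak convergence in $X_0$ gives no a.e.\ convergence of $\nabla u_n$. For the non-quadratic integrand $1-\sqrt{1-|\xi|^2}$, a Br\'ezis--Lieb splitting fails for oscillating laminate gradients with asymmetric distribution.
\end{itemize}

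Your fallback, lower semicontinuity, yields only $\liminf\Psi_0(u_n)\ge\Psi_0(u_0)$. That says nothing about the energy carried off by the remainder, so with a single profile the case $\cP_0(u_0)>0$ is not excluded.

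The missing idea is the paper's full profile decomposition, Lemma \ref{profile decomposition}. One keeps extracting profiles until the remainder vanishes. Then Lemmas \ref{vanishing lemma} and \ref{Brezis-Lieb} give $\cF_{\a,p}(u_n)\to\sum_i\cF_{\a,p}(\tilde u_i)$. The gradient term is only bounded from below, $\liminf_n\Psi_0(u_n)\ge\sum_i\Psi_0(\tilde u_i)$, by lower semicontinuity on the disjoint balls $B(y_n^i,R)$. Remainders enter only through $\cF_{\a,p}$ and norms, never through $\Psi_0$.

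With this, $\cP_0(u_n)=0$ forces $\cP_0(\tilde u_j)\le0$ for some $j$. Your own rescaling argument from the case $\cP_0(u_0)<0$ then gives $c_0\le\frac{\a+2p}{N+\a+2p}t_j^N\Psi_0(\tilde u_j)\le c_0$, hence $t_j=1$ and $\tilde u_j$ is a minimizer. After that, the following parts of your proposal go through:
\begin{itemize}
\item your strong-convergence step;
\item your interpolation proof of $c_0>0$, which is a sound substitute for the paper's mountain-pass comparison;
\item your derivation of (i)--(ii).
\end{itemize}

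Two smaller remarks.

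First, your critical-point sketch omits the one delicate point of Lemma \ref{abstract critical point}, namely the control $|\tilde t_{w_s}-1|=O(s)$, which is where (A3) enters. Since you invoke that lemma, this is not a gap. In this homogeneous setting it can even be bypassed by comparing at equal scale: $\cI_0((u_0)_{t_s})\le\cI_0(u_0)\le\cI_0((w_s)_{t_s})$ with $t_s=\tilde t_{w_s}\to1$. Dividing the difference by $t_s^N$ gives $0\le\Psi_0(w_s)-\Psi_0(u_0)-\frac{t_s^{\a+2p}}{2p}\big(\cF_{\a,p}(w_s)-\cF_{\a,p}(u_0)\big)$. Convexity of $\Psi_0$ and $s\to0^+$ then yield the variational inequality.

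Second, the ``routine algebra'' in (i) actually produces $c_0^{\frac{\a+2p}{N+\a+2p}}$. You can check this at a minimizer, where $\Psi_0=\frac{N+\a+2p}{\a+2p}c_0$ and $\cF_{\a,p}=\frac{2Np}{\a+2p}c_0$. So the printed exponent $\frac{N+2p}{N+\a+2p}$ appears to be a misprint, and your computation will not reproduce it.
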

	
	We would like to mention that if $\alpha=0$, then $C_{p,\alpha}$ coincides with the constant obtained in the local version of \eqref{Sobolev-type inequality} in \cite{BIM}.

	We can also say more about the quantitative properties with such minimizers.
	
	\begin{theorem}\label{thm:zeroradial}
		Let $p>\frac{N+\alpha}{N-2}$. Then any minimizer for $c_0$ or optimizer for \eqref{Sobolev-type inequality} has constant sign, that is, it is either positive or negative on whole $\RN$. Moreover, it is radially symmetric and decays monotonically to zero.
	\end{theorem}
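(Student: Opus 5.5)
The plan is to follow the argument for Theorem \ref{thm of qualitative} and adapt it to the zero-mass setting. The idea is to show that any minimizer for $c_0$ keeps its minimizing property under two operations: taking absolute values and Schwarz symmetrization. Constant sign, symmetry and monotone decay are then extracted from the Euler--Lagrange equation. First, optimizers of \eqref{Sobolev-type inequality} reduce to minimizers for $c_0$. By Theorem \ref{thm 0mass&sobolev ineq}(ii) every optimizer has the form $v(x)=tu(x/t)$ with $u$ a minimizer. Dilation of this kind preserves constant sign, radial symmetry about a translated center and monotone decay, so it suffices to treat a minimizer $u\in\cM_0$ with $\cI(u)=c_0$.

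\textbf{Constant sign.} Since $|\n|u||=|\n u|$ a.e., we have $\Psi_0(|u|)=\Psi_0(u)$ and $\cF_{\a,p}(|u|)=\cF_{\a,p}(u)$. Hence $|u|\in\cM_0$ and $\cI(|u|)=c_0$, so by Theorem \ref{thm 0mass&sobolev ineq} the function $|u|$ is a classical solution of \eqref{eq} with $\omega=0$, and it is nonnegative and nontrivial. Set $w=|u|$. Because $w\in C^2$ and $|\n w|<1$, the equation can be written in nondivergence form as a uniformly elliptic equation on compact sets:
\begin{equation*}
-\sum_{i,j}a_{ij}(\n w)\partial_{ij}w=(I_\a\ast w^p)w^{p-1}\ge 0.
\end{equation*}
The strong maximum principle then gives $w>0$ everywhere. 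Thus $u$ never vanishes, and since $u$ is continuous, $u$ has constant sign. One could also argue directly: the right-hand side $(I_\a\ast|u|^p)$ is strictly positive, and a Harnack inequality applies to the linear operator obtained by freezing coefficients.

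\textbf{Radial symmetry.} Take $u>0$ and let $u^*$ be its Schwarz symmetrization. Pólya--Szegő for the convex integrand $1-\sqrt{1-s^2}$ gives $|\n u^*|\le1$ and $\Psi_0(u^*)\le\Psi_0(u)$. Riesz's rearrangement inequality gives $\cF_{\a,p}(u^*)\ge\cF_{\a,p}(u)$. Next rescale $u^*$ onto $\cM_0$ by $u_t=tu^*(\cdot/t)$. This map preserves the gradient bound, and
\begin{equation*}
\cI(u_t)=t^N\Psi_0(u^*)-\tfrac{1}{2p}t^{N+\a+2p}\cF_{\a,p}(u^*),
\end{equation*}
so there is a unique maximizing $t^*\le1$. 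Moreover $c_0=\max_t\cI(tu(\cdot/t))$, and comparing the two fibering maps shows $\cI(u^*_{t^*})\le c_0$. Equality is therefore forced in Riesz's inequality. By Lieb's strict rearrangement inequality for the Riesz kernel, the equality case in $\cF_{\a,p}$ gives $u^p=(u^*)^p(\cdot-x_0)$, so $u$ is a translate of $u^*$. This makes $u$ radial and nonincreasing in $|x-x_0|$. Strict monotonicity, if needed, comes from the ODE $-(r^{N-1}v'/\sqrt{1-v'^2})'=r^{N-1}(I_\a\ast v^p)v^{p-1}>0$, which forces $v'<0$ for $r>0$. Finally, $v\to0$ because $u\in X_0$ is radial and nonincreasing.

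The main obstacle is the equality step. We must justify $c_0=\max_t\cI(tu(\cdot/t))$ for $u\in\cM_0$ and check that equality in $\cI$ really forces equality in Riesz's inequality, not merely in Pólya--Szegő. Here I would first verify that the scaling argument gives $t^*=1$, and hence $\cF_{\a,p}(u^*)=\cF_{\a,p}(u)$, before invoking Lieb's strict equality case.
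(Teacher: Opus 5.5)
Your proof is correct. The constant-sign step is the same as the paper's (Proposition \ref{constant sign}): $|u|$ is again a minimizer, hence a classical solution by Lemma \ref{minimizer=critical point0mass} and the regularity argument of Theorem \ref{Thm 1}, and the strong maximum principle gives $|u|>0$. For symmetry, however, you take a different route.

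The paper uses polarizations, via Proposition \ref{symmetry of ground state} transferred verbatim to $\omega=0$. For every half-space $H$ one has the \emph{equality} $\Psi_0(u_H)=\Psi_0(u)$, because $u_H$ only reflects the values of $u$ and $|\n u|$. If the polarization inequality for $\cF_{\a,p}$ were strict, then $\cP_0(u_H)<0$, so $t_H<1$ and $\cI_0((u_H)_{t_H})<c_0$. Hence equality holds, and Lemmas \ref{polarization inequality} and \ref{symmetrizaton} turn ``$u_H\in\{u,u\circ\sigma_H\}$ for all $H$'' into radial symmetry with a nonincreasing profile.

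You use Schwarz symmetrization together with Lieb's strict rearrangement inequality for the strictly decreasing Riesz kernel. This gives $u=u^*(\cdot-x_0)$ in one stroke and bypasses Lemma \ref{symmetrizaton}. The price is that you need a P\'olya--Szeg\H{o} inequality for the convex but extended-valued integrand $1-\sqrt{1-s^2}$. That includes $|\n u^*|\le 1$ a.e., which follows for instance from $\lVert\n u^*\rVert_q\le\lVert\n u\rVert_q$ for all $q\ge 2$ and letting $q\to\infty$. It also includes $u^*\in X_0$. Polarization gives all of this for free.

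The step you flag as the main obstacle is in fact immediate. Since $\cI_0((u^*)_t)\le\cI_0(u_t)$ for every $t>0$, the chain $c_0\le\cI_0((u^*)_{t^*})\le\cI_0(u_{t^*})\le\cI_0(u)=c_0$ forces $t^{*N}\big(\Psi_0(u)-\Psi_0(u^*)\big)+\frac{t^{*(N+\a+2p)}}{2p}\big(\cF_{\a,p}(u^*)-\cF_{\a,p}(u)\big)=0$. Both terms are nonnegative, so $\cF_{\a,p}(u^*)=\cF_{\a,p}(u)$ follows directly, and uniqueness of the fiber maximum also gives $t^*=1$.
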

	
	We emphasize that both Theorems \ref{thm of qualitative} and \ref{thm:zeroradial} remain valid in the local case $\alpha=0$. In particular, they yield the radial symmetry of all ground state solutions, a property which was not established in \cite{BIM}.
	
	The paper is organized as follows. In Section \ref{sec preliminaries}, we collect the basic notation, recall the functional framework, and establish some preliminary properties of the nonlocal term and of the nonsmooth Born--Infeld functional. In particular, we prove the differentiability of the Choquard-type functional and derive a Poho\v{z}aev identity for weak solutions. Section \ref{sec CCP and PD} is devoted to the concentration-compactness analysis. We prove a vanishing lemma, a nonlocal Br\'ezis--Lieb type decomposition, and a profile decomposition adapted to the nonlocal setting. In Section \ref{sec Positive mass case}, we study the positive-mass case \(\omega=1\). We analyze the fibering map associated with the natural scaling, prove the existence of a minimizer on the Poho\v{z}aev set \(\mathcal M_1\), and show that such a minimizer is a ground state solution. Section \ref{sec Qualitative properties} contains the qualitative analysis of ground state solutions, including their constant sign, radial symmetry, and monotonicity properties. In Section \ref{sec zero mass}, we turn to the zero-mass case \(\omega=0\). We prove the existence of a ground state solution, establish the associated nonlocal Sobolev-type inequality, and characterize the best constant and its optimizers. Finally, in the Appendix \ref{Appendix}, we prove an abstract result, Lemma \ref{abstract critical point}, showing that constrained minimizers on a suitable constrained set are critical points. The argument is inspired by \cite{BIM}, but its formulation is adapted to both local and nonlocal problems. We also demonstrate that such abstract lemma can be employed for strongly indefinite Schr\"{o}dinger equations of Szulkin and Weth's settings \cite{SW} as well, see Theorem \ref{ThNehariPankov}.  We expect that it will be useful in a broader class of nonsmooth variational settings.

	.
	
	\section{Preliminaries}\label{sec preliminaries}
	
	We write $A\lesssim B$ if there exists a constant $C>0$, independent of the relevant parameters, such that $A\leq CB$.
	
	We first recall from \cite{BIMM} the basic embedding properties of our working space.
	
	\begin{proposition}\label{embedding}
		Let $r>\max\{N,2^*\}$. Then $X_1$ and $X_0$ are reflexive Banach space, $X_1\subset X_0$ and satisfy the following continuous embeddings:
		\begin{enumerate}[label={\rm (\roman*)}, ref={\rm (\roman*)}]
			\item\label{embedding of W1r} $X_0\hookrightarrow W^{1,q}(\RN)$, for every $q\in [2^*,r]$; and $X_1\hookrightarrow W^{1,q}(\RN)$, for every $q\in [2,r]$
			\item\label{embedding of C0} $X_1\text{ and }X_0\hookrightarrow \cC_{0}(\RN) : = \{u\in \cC(\RN):\lim\limits_{|x|\to \infty}|u(x)| =0 \}$;
			\item\label{embedding of Lp} $X_1\hookrightarrow L^{q}(\RN)$,  for every $q\in [2,\infty]$; and $X_0\hookrightarrow L^{q}(\RN)$,  for every  $q\in [2^*,\infty]$
		\end{enumerate}
	\end{proposition}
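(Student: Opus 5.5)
We treat $X_\omega$ as the completion of $\cC_0^\infty(\RN)$ under a norm equivalent to $\lVert \nabla u\rVert_2+\lVert \nabla u\rVert_r$ (plus $\lVert u\rVert_2$ when $\omega=1$), and derive each claim from standard Sobolev and Morrey inequalities applied to test functions, then pass to the completion.

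\emph{Reflexivity and inclusion.} The map $u\mapsto(\sqrt{\omega}\,u,\nabla u,\nabla u)$ is an isometry from $X_\omega$ onto a closed subspace of $L^2(\RN)\times L^2(\RN;\RN)\times L^r(\RN;\RN)$, with the product norm $\sqrt{\lVert\cdot\rVert_2^2+\lVert\cdot\rVert_2^2+\lVert\cdot\rVert_r^2}$. This product is reflexive since $1<2,r<\infty$, and closed subspaces of reflexive spaces are reflexive. Since $\lVert u\rVert_{X_0}\le\lVert u\rVert_{X_1}$ on $\cC_0^\infty(\RN)$, the identity extends to a continuous map $X_1\to X_0$. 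This map is injective: if $u_n\to u$ in $X_1$ with $u_n\to 0$ in $X_0$, then $\nabla u_n\to0$ in $L^2$ and $u_n\to u$ in $L^2$, so $\nabla u=0$ and $u\in L^2$, hence $u=0$. Thus $X_1\subset X_0$.

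\emph{Embeddings (i) and (iii).} For $u\in\cC_0^\infty(\RN)$, Sobolev gives $\lVert u\rVert_{2^*}\lesssim\lVert\nabla u\rVert_2$. Interpolation gives $\lVert\nabla u\rVert_q\le\lVert\nabla u\rVert_2^{\theta}\lVert\nabla u\rVert_r^{1-\theta}\lesssim\lVert u\rVert_{X_0}$ for every $q\in[2,r]$. For $q\in[2^*,r]$, the Gagliardo--Nirenberg--Sobolev inequality bounds $\lVert u\rVert_q$ by $\lVert u\rVert_{2^*}$ together with $\lVert\nabla u\rVert_{q'}$ for a suitable $q'\le r$. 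For the $L^\infty$ bound we use the Morrey-type Gagliardo--Nirenberg inequality, valid since $r>N$: $\lVert u\rVert_\infty\lesssim\lVert u\rVert_{2^*}^{1-\theta}\lVert\nabla u\rVert_r^{\theta}$. Interpolation between $L^{2^*}$ and $L^\infty$ then gives every $q\in[2^*,\infty]$ for $X_0$. For $X_1$ we additionally have $\lVert u\rVert_2$, and interpolation between $L^2$ and $L^\infty$ covers $[2,\infty]$, which also yields $u\in L^q$ for $q\in[2,r]$. By density, all these estimates extend to the completions.

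\emph{Embedding (ii).} Morrey's inequality gives $|u(x)-u(y)|\lesssim\lVert\nabla u\rVert_r|x-y|^{1-N/r}$, and we already have the uniform bound $\lVert u\rVert_\infty\lesssim\lVert u\rVert_{X_0}$. A Cauchy sequence $u_n\in\cC_0^\infty(\RN)$ in $X_\omega$ is therefore Cauchy in sup norm. Its uniform limit lies in the closed space $\cC_0(\RN)$, and it coincides a.e.\ with the $X_\omega$-limit, which gives the continuous embedding.

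\emph{Main obstacle.} The most delicate point is making the $L^\infty$ estimate in the zero-mass space $X_0$ rigorous, because there $u$ itself is controlled only in $L^{2^*}$. This requires a version of Gagliardo--Nirenberg with the endpoints $L^{2^*}$ and $\dot W^{1,r}$, $r>N$, which we would quote from the literature, or prove by a local Morrey estimate on unit balls combined with the $L^{2^*}$ bound on each ball. Alternatively, all of the above may simply be cited from \cite{BIMM}.
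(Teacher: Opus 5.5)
Your argument is correct. The paper does not prove this proposition; it only recalls it from \cite{BIMM}, so you are supplying a self-contained proof where the paper relies on a citation. Your route is the standard one: an isometric embedding of $X_\omega$ into $L^2\times L^2(\RN;\RN)\times L^r(\RN;\RN)$ for reflexivity, Sobolev plus H\"older interpolation for the gradient and the $L^{2^*}$ bound, and an $L^\infty$ bound coming from $r>N$. It also makes visible where each hypothesis enters. The condition $r>N$ is used only for the $L^\infty$ and $\cC_0$ statements, while $r>2^*$ only makes the range $[2^*,r]$ in (i) nonempty. Citing \cite{BIMM} is, of course, shorter.

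A few tidy-ups. The step you flag as the main obstacle is already fully settled by the local estimate you mention, so you do not need to quote a Gagliardo--Nirenberg inequality. For $u\in\cC_0^\infty(\RN)$ and $B=B(x,1)$, Morrey's inequality on $B$ gives $|u(x)|\le |u(x)-u_B|+|u_B|\lesssim \lVert\nabla u\rVert_{L^r(B)}+\lVert u\rVert_{L^{2^*}(B)}\lesssim\lVert u\rVert_{X_0}$.

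For (ii), the H\"older part of Morrey is not needed. The uniform bound alone makes $X_\omega$-Cauchy sequences of test functions uniformly Cauchy, and $\cC_0(\RN)$ is closed under uniform limits. Your sentence invoking Gagliardo--Nirenberg--Sobolev for $q\in[2^*,r]$ is superseded by the interpolation between $L^{2^*}$ and $L^\infty$ and can be dropped.

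Finally, when you say the estimates ``extend by density'', you are implicitly using two facts:
\begin{enumerate}
\item the abstract completion maps injectively into $L^{2^*}(\RN)$ (resp.\ $L^2(\RN)$);
\item the limit of $\nabla u_n$ in $L^2(\RN)\cap L^r(\RN)$ is the distributional gradient of the limit function.
\end{enumerate}
This is what makes $X_\omega$ a genuine function space and (i) meaningful. It is proved exactly like your injectivity argument for $X_1\to X_0$, and it is worth stating explicitly.
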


	Next, we show that the nonlocal part of the functional is of class $\cC^1$. 
	\begin{proposition}\label{differentiable of nonlocal}
		Let  $p\geq\frac{N+\alpha}{N}$ in the case of $X_1$ and $p\geq \frac{N+\a}{N-2}$ in the case of $X_0$. Then $I_{\a}\ast |u|^{p}\in L^{\infty}(\RN)$ for every $u\in X_\omega$. Moreover, the functional $\cF_{\a,p}(u)$ defined in \eqref{nonlocal functional} is of class $\cC^1$.
	\end{proposition}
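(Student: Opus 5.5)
The plan is to split the Riesz potential into near-field and far-field parts and control each with Proposition \ref{embedding}; $\cC^1$ regularity then follows from the Hardy--Littlewood--Sobolev (HLS) inequality combined with the Nemytskii-type continuity of $u\mapsto |u|^p$ and $u\mapsto |u|^{p-2}u$.

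\emph{Step 1: $I_\a\ast|u|^p\in L^\infty$.} Fix $x$ and write $I_\a = I_\a\chi_{B_1}+I_\a\chi_{B_1^c}$. The function $I_\a\chi_{B_1}$ lies in $L^1$ because $\a>0$, so
\[
\bigl|(I_\a\chi_{B_1})\ast|u|^p\bigr|\lesssim \lVert u\rVert_\infty^p,
\]
which is finite by Proposition \ref{embedding}\ref{embedding of Lp}. For the far part, $I_\a\chi_{B_1^c}\in L^{s'}$ whenever $s'(N-\a)>N$, i.e.\ $s'>\frac{N}{N-\a}$, or equivalently $s<\frac{N}{\a}$. Hölder then gives the bound $\lVert u\rVert_{ps}^p$, and we need $ps\ge 2$ (for $X_1$) or $ps\ge 2^*$ (for $X_0$) with $s<N/\a$; we may also take $s\ge1$ large. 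So we require $p\cdot\frac{N}{\a}>2$ in the case of $X_1$. Since $p\ge\frac{N+\a}{N}>1$, we have $pN/\a>N/\a>1$, but this is not automatically $>2$. We can, however, choose any $q\in[2,\infty]$: we need some $s\in[1,N/\a)$ with $ps\ge 2$, and $s=\min\{N/\a-\delta,\ \text{large}\}$ works provided $pN/\a>2$. If that fails, I would instead use the HLS route: $I_\a\ast|u|^p\in L^{Nt/(N-\a t)}$ for $|u|^p\in L^t$ with $1<t<N/\a$, which combined with interpolation again needs $pt\ge2$. So the threshold condition $p\ge\frac{N+\a}{N}$ must be used more carefully, presumably splitting at a radius and using $u\in L^2$ through $p>1$. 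This exponent bookkeeping near the endpoint $p=\frac{N+\a}{N}$ (respectively $p=\frac{N+\a}{N-2}$ for $X_0$, where only $q\ge 2^*$ is available) is the step I expect to be the main obstacle. In particular, for $X_0$ the condition $p\ge\frac{N+\a}{N-2}$ gives $p\cdot\frac{2N}{N+\a}\ge 2^*$, which is exactly what makes $|u|^p\in L^{2N/(N+\a)}$ admissible.

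\emph{Step 2: finiteness and differentiability.} HLS gives
\[
\cF_{\a,p}(u)\lesssim \lVert u\rVert_{2Np/(N+\a)}^{2p},
\]
and $\frac{2Np}{N+\a}\ge 2$ (respectively $\ge 2^*$) under the hypotheses, so $\cF_{\a,p}$ is finite and continuous on $X_\omega$ by Proposition \ref{embedding}. For the Gâteaux derivative, compute
\[
\cF_{\a,p}'(u)[v]=2p\irn (I_\a\ast|u|^p)|u|^{p-2}uv\,dx,
\]
differentiating under the integral by dominated convergence. This uses $p>1$ and the mean value theorem $\bigl||u+tv|^p-|u|^p\bigr|\le p(|u|+|v|)^{p-1}|t||v|$, together with the symmetry of the bilinear HLS form.

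\emph{Step 3: continuity of the derivative.} Show $u\mapsto \cF_{\a,p}'(u)$ is continuous from $X_\omega$ to $X_\omega^*$. Bound
\[
|\cF_{\a,p}'(u_n)[v]-\cF_{\a,p}'(u)[v]|
\]
by splitting it into $(I_\a\ast(|u_n|^p-|u|^p))|u_n|^{p-2}u_n v$ and $(I_\a\ast|u|^p)(|u_n|^{p-2}u_n-|u|^{p-2}u)v$. Apply HLS with exponent $\frac{2N}{N+\a}$ on each factor and Hölder. Since $u_n\to u$ in $L^{2Np/(N+\a)}$, the Nemytskii maps $|u|^p\colon L^{2Np/(N+\a)}\to L^{2N/(N+\a)}$ and $|u|^{p-2}u\colon L^{2Np/(N+\a)}\to L^{2Np/((N+\a)(p-1))}$ are continuous, which yields convergence uniformly in $\lVert v\rVert_{X_\omega}\le1$. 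Gâteaux differentiability with continuous derivative then gives $\cF_{\a,p}\in\cC^1$.
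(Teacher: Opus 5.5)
Your route is the paper's route: split the Riesz kernel at radius $1$, bound the near part by $\lVert u\rVert_\infty^p$ and the far part by H\"older, then get the $\mathcal{C}^1$ property from HLS and the continuity of the Nemytskii maps $u\mapsto|u|^p$ and $u\mapsto|u|^{p-2}u$. Steps 2 and 3 are correct, and more detailed than the paper's sketch.

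The gap is in Step 1, which you do not finish. You claim that $pN/\alpha>2$ is ``not automatically'' true and leave the endpoint as an open obstacle. That claim is false. Since $0<\alpha<N$ and $p\ge\frac{N+\alpha}{N}$,
\[
p\,\frac{N}{\alpha}\;\ge\;\frac{N+\alpha}{\alpha}\;=\;1+\frac{N}{\alpha}\;>\;2.
\]
Similarly, for $X_0$ we get $p\frac{N}{\alpha}\ge\frac{N(N+\alpha)}{\alpha(N-2)}>\frac{2N}{N-2}=2^*$. So there is no endpoint difficulty, and neither of your fallbacks is needed. The ``HLS route'' would not work anyway: HLS only places $I_\alpha\ast|u|^p$ in Lebesgue spaces with finite exponent, so it cannot give an $L^\infty$ bound.

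To close Step 1, take the exponent you already isolate at the end of Step 1 and use in Step 2, namely $s=\frac{2N}{N+\alpha}$. Because $\alpha<N$, we have $1<s<\frac{N}{\alpha}$ and $s'=\frac{2N}{N-\alpha}$. Since $(N-\alpha)s'=2N>N$, it follows that $I_\alpha\chi_{B_1^c}\in L^{s'}$, and uniformly in $x$,
\[
\bigl|\bigl((I_\alpha\chi_{B_1^c})\ast|u|^p\bigr)(x)\bigr|
\lesssim\bigl\lVert |u|^p\bigr\rVert_{\frac{2N}{N+\alpha}}
=\lVert u\rVert_{\frac{2Np}{N+\alpha}}^{p}.
\]
The hypothesis on $p$ is exactly $\frac{2Np}{N+\alpha}\ge2$ for $X_1$ (resp.\ $\ge2^*$ for $X_0$). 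Hence the right-hand side is finite by the embeddings $X_1\hookrightarrow L^q$ for $q\in[2,\infty]$ and $X_0\hookrightarrow L^q$ for $q\in[2^*,\infty]$. Combined with your near-field estimate, this gives $\lVert I_\alpha\ast|u|^p\rVert_\infty\lesssim\lVert u\rVert_\infty^p+\lVert u\rVert_{2Np/(N+\alpha)}^p$, which is precisely the paper's argument. With this short verification inserted, your proposal is complete.
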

	\begin{proof}
		In view of Proposition \ref{embedding} \ref{embedding of Lp}, we have $|u|^{p}\in L^{r}(\R^N)$ for any $r\in [\frac{2N}{N+\a},\infty]$. Since $\frac{2N}{N+\a}<\frac{N}{\a}$, the standard property of Riesz's potential (see \cite{DuP,LiebLoss}) leads to $I_{\a}\ast |u|^{p}\in L^{s}(\RN)$, where $s\in [\frac{2N}{N-\a},\infty)$. Moreover, since $|u|^{p}\in L^\infty(\mathbb{R}^N)\cap L^{\frac{2N}{N+\alpha}}(\mathbb{R}^N)$, we obtain that 
		\begin{equation}\label{eq:Linfty}
			I_\alpha*|u|^{p}\in L^\infty(\mathbb{R}^N).
		\end{equation}
		Indeed, setting $f:=|u|^p$, for every $x\in\mathbb{R}^N$, we write
		\[
		|I_\alpha*f(x)|
		\leq
		C_{N,\alpha}
		\left(
		\int_{B_1(x)}
		\frac{|f(y)|}{|x-y|^{N-\alpha}}\,dy
		+
		\int_{\mathbb{R}^N\setminus B_1(x)}
		\frac{|f(y)|}{|x-y|^{N-\alpha}}\,dy
		\right).
		\]
		For the local part, using $f\in L^\infty(\mathbb{R}^N)$, we have
		\[
		\int_{B_1(x)}
		\frac{|f(y)|}{|x-y|^{N-\alpha}}\,dy
		\leq
		\|f\|_\infty
		\int_{B_1(x)}
		\frac{1}{|x-y|^{N-\alpha}}\,dy\lesssim \|f\|_\infty.
		\]
		For the far-field part, we apply Hölder's inequality with
		$r=\frac{2N}{N+\alpha}$,
		$r'=\frac{2N}{N-\alpha}$, namely
		\[
		\begin{aligned}
			\int_{\mathbb{R}^N\setminus B_1(x)}
			\frac{|f(y)|}{|x-y|^{N-\alpha}}\,dy
			&\leq
			\|f\|_{\frac{2N}{N+\alpha}}
			\left(
			\int_{\mathbb{R}^N\setminus B_1(x)}
			\frac{1}{|x-y|^{(N-\alpha)r'}}\,dy
			\right)^{1/r'} 	\lesssim\|f\|_{\frac{2N}{N+\alpha}}.
		\end{aligned}
		\]
		Therefore,
		\[
		\|I_\alpha*f\|_\infty
		\lesssim
		\|f\|_\infty+
		\|f\|_{\frac{2N}{N+\alpha}}.
		\]
		and the proof of \eqref{eq:Linfty} is complete.

		It remains to verify the differentiability. Put
		\[
		s:=\frac{2N}{N+\a}.
		\]
		By Proposition \ref{embedding} \ref{embedding of Lp}, the map $u\mapsto |u|^p$ sends $X_\omega$ continuously into $L^s(\RN)$. Indeed, if $u_n\to u$ in $X_\omega$, then $u_n\to u$ in $L^{ps}(\RN)$, and
		\[
		\big\||u_n|^p-|u|^p\big\|_s
		\leq C\big(\lVert u_n\rVert_{ps}^{p-1}+\lVert u\rVert_{ps}^{p-1}\big)
		\lVert u_n-u\rVert_{ps}\to0.
		\]
		Hence, by the Hardy-Littlewood-Sobolev inequality,
		\[
		\cF_{\a,p}(u)=\int_{\RN}(I_\a*|u|^p)|u|^p\,dx
		\]
		is continuous on $X_\omega$. Moreover, since the Nemytskii map
		$u\mapsto |u|^{p-2}u$ is continuous from $X_\omega$ into
		$L^{\frac{2N}{(N+\a)(p-1)}}(\RN)$, for any $v\in X_\omega$ we have
		\begin{equation}\label{derivative of nonlocal}
			\cF_{\a,p}'(u)v
			=2p\int_{\RN}(I_\a*|u|^p)|u|^{p-2}uv\,dx.
		\end{equation}
		The continuity of $u\mapsto \cF_{\a,p}'(u)$ follows again from the above
		continuity of $u\mapsto |u|^p$, the Hardy-Littlewood-Sobolev inequality
		and Hölder's inequality. Therefore $\cF_{\a,p}\in \cC^1(X_\omega,\R)$.
	\end{proof}
	
	We next prove the Pohožaev-type identity for \eqref{eq}.
	
	\begin{lemma}\label{Pohozaev lemma}
		Any weak solution to 
		\begin{equation*}
			-{\rm div}\left(\dfrac{\n u}{\sqrt{1-|\n u|^2}}\right)+ \omega u=\big(I_\alpha\ast |u|^{p}\big)|u|^{p-2}u,\quad  u\in X_{\omega};\; N\geq 3,
		\end{equation*}
		where $p\geq\frac{N+\alpha}{N}$, if $\omega=1$; or $p\geq\frac{N+\a}{N-2}$,  if $\omega=0$, satisfies the following formula 
		\begin{equation}\label{Pohozaev}
			\int_{\mathbb{R}^N} \frac{|\nabla u|^2}{\sqrt{1 - |\nabla u|^2}}\,dx 
			=N\int_{\mathbb{R}^N}\left( 1 - \sqrt{1 - |\nabla u|^2}+\frac{\omega}{2}|u|^2\right)\,dx -  \frac{N+\alpha}{2p} \irn\big(I_\alpha\ast |u|^{p}\big)|u|^{p}  \,dx.
		\end{equation}
		Therefore, any  nontrivial weak solution to \eqref{eq} lies in $\cM_\omega$.
	\end{lemma}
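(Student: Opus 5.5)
The plan is to test the weak formulation of the equation against the dilation generator $x\cdot\nabla u$, in the form of difference quotients along the family $u_t(x):=u(x/t)$. A direct multiplication by $x\cdot\nabla u$ would need second derivatives and integrability of $x\cdot\nabla u$, neither of which is available. The constraint $|\nabla u|\le 1$ also creates a difficulty: $u_t$ has $|\nabla u_t|\le 1/t$, so $u_t$ is admissible for $t\ge1$ but not obviously for $t<1$. For this reason I would run the argument through the variational inequality \eqref{critical point} rather than through a genuine weak formulation. By \cite[Proposition 2.10]{BIMM} and the discussion before Theorem \ref{Thm 1}, weak solutions in $X_\omega$ correspond to critical points of $\Psi_0-\Phi$ in Szulkin's sense. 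First I would check that a weak solution $u$ satisfies $|\nabla u|<1$ a.e. with $\frac{|\nabla u|^2}{\sqrt{1-|\nabla u|^2}}\in L^1$. This follows from the weak formulation tested with $u$ itself, using that $(I_\a*|u|^p)|u|^{p-2}u\,u\in L^1$ by Proposition \ref{differentiable of nonlocal}. Since $u$ is a critical point, for every $v\in D(\Psi_0)$ we have $\Psi_0(v)-\Psi_0(u)\ge \Phi'(u)(v-u)$.

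Next take $v=u_t$ with $t>1$, and also $t<1$ when admissible. Scaling gives
\[
\Psi_0(u_t)=t^N\irn\Big(1-\sqrt{1-t^{-2}|\nabla u|^2}\Big)dx,\qquad \|u_t\|_2^2=t^N\|u\|_2^2,\qquad \cF_{\a,p}(u_t)=t^{N+\a}\cF_{\a,p}(u).
\]
The term $\Phi'(u)(u_t-u)$ is harder, because it is not a scaling of a known quantity. Instead I would use the convexity of $\Psi_0$ together with the differentiability of $t\mapsto\Phi(u_t)$. The fibre $t\mapsto\Phi(u_t)=\frac{t^{N+\a}}{2p}\cF_{\a,p}(u)-\frac{\omega t^N}{2}\|u\|_2^2$ is smooth. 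Moreover, $t\mapsto u_t$ is differentiable at $t=1$ in the spaces where $\Phi$ is $\mathcal C^1$, namely $L^2\cap L^{q}$. There the derivative is $-x\cdot\nabla u$ in a weak/distributional sense, which is justified by the continuity of dilations in $L^q$. Hence $\Phi'(u)(u_t-u)=\Phi(u_t)-\Phi(u)+o(t-1)$.

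Dividing the inequality by $t-1>0$ and letting $t\to1^+$ gives
\[
\frac{d}{dt}\Big|_{t=1^+}\Psi_0(u_t)\ \ge\ \frac{d}{dt}\Big|_{t=1}\Phi(u_t)=\frac{N+\a}{2p}\cF_{\a,p}(u)-\frac{N\omega}{2}\|u\|_2^2 .
\]
By monotone/dominated convergence, using the integrability established in the first step, the left-hand side equals
\[
N\Psi_0(u)-\irn\frac{|\nabla u|^2}{\sqrt{1-|\nabla u|^2}}\,dx .
\]
This gives one inequality in \eqref{Pohozaev}. For the reverse inequality, $t<1$ would be needed, and the set where $|\nabla u|$ is close to $1$ makes $u_t$ inadmissible. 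I expect this to be the main obstacle. I would handle it by using the weak equation directly against the truncated test function $\varphi_R\,(x\cdot\nabla u)$ after mollification. Alternatively, I would prove regularity first: $u\in\mathcal C^1$ with $\|\nabla u\|_\infty<1$, which holds because the right-hand side is bounded by Proposition \ref{differentiable of nonlocal}, so the local regularity results for $\cQ$ apply, and $u$ decays at infinity. Once $\|\nabla u\|_\infty<1$, $u_t$ is admissible for $t$ near $1$ on both sides, $\Psi_0$ is differentiable along the curve, and the limit $t\to1^-$ gives the opposite inequality, hence equality, i.e. \eqref{Pohozaev}.

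Finally, combine \eqref{Pohozaev} with the Nehari identity obtained by testing with $u$:
\[
\irn\frac{|\nabla u|^2}{\sqrt{1-|\nabla u|^2}}\,dx+\omega\|u\|_2^2=\cF_{\a,p}(u).
\]
Eliminating the gradient integral yields
\[
\cF_{\a,p}(u)-\omega\|u\|_2^2=N\Psi_0(u)+\frac{N\omega}{2}\|u\|_2^2-\frac{N+\a}{2p}\cF_{\a,p}(u),
\]
that is, $\cP_\omega(u)=0$. Therefore every nontrivial weak solution lies in $\cM_\omega$.
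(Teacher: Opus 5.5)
Your argument has a genuine gap at the step $\Phi'(u)(u_t-u)=\Phi(u_t)-\Phi(u)+o(t-1)$. You justify it by saying that $t\mapsto u_t=u(\cdot/t)$ is differentiable at $t=1$ into $L^2\cap L^{q}$ "by continuity of dilations". Continuity of dilations only gives $\|u_t-u\|\to0$. Differentiability at $t=1$ in $L^q$ is equivalent to $x\cdot\nabla u\in L^q$, which is a global decay property. Neither $u\in X_\omega$ nor the local regularity ($u\in W^{2,q}_{\rm loc}$, $\|\nabla u\|_\infty\le 1-\delta$) provides it, and you never prove it.

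For a general $u\in X_\omega$ the claim is in fact false. Since
\[
\frac{2}{t-1}\int_{\mathbb{R}^N}u\,(u_t-u)\,dx=\frac{t^N-1}{t-1}\|u\|_2^2-\frac{\|u_t-u\|_2^2}{t-1},
\]
the mass part of your claim needs $\|u_t-u\|_2^2=o(|t-1|)$. Take a fixed bump $\phi$ supported in the unit ball with $|\nabla\phi|\le1$, and set $u=\sum_{k\ge1}2^{-k/2}\phi(\cdot-2^ke_1)\in X_1$. Then $\|u_t-u\|_2^2\ge\frac12(t-1)\|\phi\|_2^2$ for $t=1+2^{-m}$ with $m$ large.

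Nor can the remainder be dropped by a sign argument. $\mathcal{F}_{\alpha,p}$ is convex, being the monotone positive definite form $f\mapsto\int(I_\alpha*f)f$ on $f\ge0$ composed with the pointwise convex map $u\mapsto|u|^p$. Hence $\mathcal{F}_{\alpha,p}(u_t)-\mathcal{F}_{\alpha,p}(u)-\mathcal{F}_{\alpha,p}'(u)(u_t-u)\ge0$. This is the unfavourable direction both for your $t>1$ inequality and for the $t<1$ one; only the concave mass term has a harmless remainder. So not even your one-sided inequality for $t>1$ is established.

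Conversely, what you single out as the main obstacle is not one. \cite[Proposition 2.10(ii)]{BIMM} gives $\|\nabla u\|_\infty\le1-\delta$, so $u_t\in D(\Psi_0)$ for $t$ near $1$ on both sides.

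The paper never dilates globally. With $u\in W^{2,q}_{\rm loc}$ and $\|\nabla u\|_\infty\le1-\delta$ in hand, it tests the equation with the compactly supported function $\varphi_\varepsilon=\eta_\varepsilon(x)\,x\cdot\nabla u$, where $\eta_\varepsilon=\eta(\varepsilon\,\cdot)$ and $|x||\nabla\eta(x)|\le C$.
\begin{itemize}
\item The gradient term is expanded pointwise and integrated by parts. The limit $\varepsilon\to0$ follows by dominated convergence, using $|\nabla u|^2/\sqrt{1-|\nabla u|^2}\le C_\delta|\nabla u|^2$.
\item The nonlocal term is treated as in Moroz--Van Schaftingen. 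Write $|u|^{p-2}u\,x\cdot\nabla u=\frac1p\,x\cdot\nabla|u|^p$, integrate by parts and symmetrize the Riesz kernel. Since $(\eta_\varepsilon(x)x-\eta_\varepsilon(y)y)\cdot(x-y)/|x-y|^2$ is bounded uniformly in $\varepsilon$, only $|u|^p\in L^{2N/(N+\alpha)}$ is needed, and no decay of $x\cdot\nabla u$ ever enters.
\item The mass term is handled the same way.
\end{itemize}
Your fallback with the truncated test function $\varphi_R\,(x\cdot\nabla u)$ is exactly this argument, and it must be used for every term, not only to recover the reverse inequality. Alternatively, your variational route would go through if you first proved a decay estimate such as $x\cdot\nabla u$ belonging to the relevant Lebesgue spaces; that is extra work the cutoff avoids. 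Your Nehari identity and the final elimination giving $\mathcal{P}_\omega(u)=0$ are correct and match the paper.
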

	\begin{proof}
		Let $u$ be a weak solution to the equation in this Lemma.  Note from Proposition \ref{embedding} that $u\in L^{\infty}(\RN)$ and from Proposition \ref{differentiable of nonlocal} that $I_\a* |u|^{p}\in L^{\infty}(\RN)$. Then by \cite[Proposition 2.10]{BIMM}, we know that \( u \in W^{2,q}_{\mathrm{loc}}(\mathbb{R}^N) \) for each \( q \in [2, \infty) \), and \( |\nabla u| < 1 \) on \( \mathbb{R}^N \). Let $\eta\in \cC_{0}^{\infty}(\RN)$ such that $\eta\equiv 1$ when $|x|\leq 1$, $\eta\equiv 0$ when $|x|\geq 2$, and $|x| \cdot |\nabla \eta(x)|\leq C$ for every $x\in \RN$. For any $\varepsilon>0$, set $\eta_{\varepsilon}(x): =\eta(\varepsilon x)$. By regularity of $u$, we know that $\vp_{\varepsilon}:=\eta_{\varepsilon}(x)x\cdot \nabla u \in W^{1,q}_{\rm{loc}}(\RN)$ for each $q\in [2,\infty)$. Testing \eqref{eq} by $\vp_{\varepsilon}$ leads to
		\begin{equation*}
			\irn \frac{\nabla u\cdot\nabla \vp_{\varepsilon}}{\sqrt{1-|\nabla u|^2}} \, dx=\irn  \big(I_{\a} \ast |u|^{p}\big)\big(|u|^{p-2}u\vp_{\varepsilon}\big) \, dx-\omega\irn u\vp_{\varepsilon}\, dx.
		\end{equation*}  
		
		Notice that
		\[
		\frac{\nabla u \cdot \nabla \vp_\varepsilon}{\sqrt{1 - |\nabla u|^2}}
		= \frac{(\nabla u\cdot \nabla \eta_{\eps})(\nabla u \cdot x)}{\sqrt{1 - |\nabla u|^2}}
		+ \frac{\eta_\varepsilon |\nabla u|^2}{\sqrt{1 - |\nabla u|^2}}
		+ \eta_\varepsilon x \cdot \nabla \left( 1 - \sqrt{1 - |\nabla u|^2} \right).
		\]
		Integrating by part leads to
		\begin{equation*}
			\begin{aligned}
				\irn \frac{\nabla u\cdot\nabla \vp_{\varepsilon}}{\sqrt{1-|\nabla u|^2}} \, dx=&\irn \big[\frac{(\nabla u\cdot \nabla \eta_{\eps})(\nabla u \cdot x)}{\sqrt{1 - |\nabla u|^2}}+ \frac{\eta_\varepsilon |\nabla u|^2}{\sqrt{1 - |\nabla u|^2}}\big]dx
				\\
				&- \irn\big( x \cdot \nabla \eta_\varepsilon+N \eta_\eps \big) \left( 1 - \sqrt{1 - |\nabla u|^2} \right)dx 
			\end{aligned}
		\end{equation*}
		Since 
		\[
		\rho:=(I_\alpha*|u|^p)|u|^{p-2}u-\omega u
		\in L^\infty(\mathbb R^N)
		\]
		and $\mathcal{Q}(u)=\rho$,
		by \cite[Proposition 2.10(ii)]{BIMM}, there exists
		\(\delta\in(0,1)\) such that
		\[
		\|\nabla u\|_\infty\leq 1-\delta.
		\]
		Consequently,
		\[
		\frac{|\nabla u|^2}{\sqrt{1-|\nabla u|^2}}
		\leq C_\delta |\nabla u|^2\in L^1(\mathbb R^N).
		\]
		Therefore, using Lebesgue's dominated convergence theorem we get 
		\begin{equation}\label{eq:LebPoho}
			\lim_{\eps\to 0^+}\irn \frac{\nabla u\cdot\nabla \vp_{\varepsilon}}{\sqrt{1-|\nabla u|^2}} \, dx= \irn \frac{|\nabla u|^2}{\sqrt{1 - |\nabla u|^2}}\, dx-N\irn \left( 1 - \sqrt{1 - |\nabla u|^2} \right)dx .
		\end{equation}
		Next, it suffices to follow the proof in \cite[Proposition 3.1]{MVJFA} to obtain 
		\[
		\lim_{\eps\to 0^+} \irn \big(I_{\a} \ast |u|^{p}\big)\big(|u|^{p-2}u\vp_{\varepsilon}\big) \, dx=-\frac{N+\a}{2p}\irn \big(I_{\a}\ast |u|^{p}\big)|u|^{p}\, dx
		\]		
		and
		\begin{equation*}
			\lim_{\eps\to 0^+}\omega\irn u\vp_{\varepsilon}\, dx =-\omega\frac{N}{2} \irn |u|^2 \, dx,
		\end{equation*}
		where, in the case $\omega=0$, this term is absent.
		Then \eqref{Pohozaev} holds. Moreover, testing \eqref{eq}  by $u$ and combining it with Pohožaev identity \eqref{Pohozaev}, it is not difficult to deduce $\cP_\omega(u)=0$.   
	\end{proof}

	\begin{corollary}\label{cor:nonexistence}
		Assume that either
		\[
		\omega=1
		\quad\text{and}\quad
		1<p\leq \frac{N+\alpha}{N},
		\]
		or
		\[
		\omega=0
		\quad\text{and}\quad
		1<p\leq \frac{N+\alpha}{N-2}.
		\]
		Let \(u\in X_\omega\) be a weak solution to \eqref{eq} such that
		\begin{equation}\label{eq:uinL1}
			\frac{|\nabla u|^2}{\sqrt{1-|\nabla u|^2}}\in L^1(\mathbb R^N).
		\end{equation}
		Then \(u=0\).
	\end{corollary}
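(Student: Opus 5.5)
We combine the Pohožaev identity \eqref{Pohozaev} with the Nehari-type identity obtained by testing \eqref{eq} with $u$. These are two linear relations among the integrals
\[
A:=\irn \frac{|\n u|^2}{\sqrt{1-|\n u|^2}}\,dx,\quad B:=\Psi_0(u),\quad C:=\lVert u\rVert_2^2,\quad F:=\cF_{\a,p}(u),
\]
and the target is a sign contradiction.

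\textbf{Step 1: the two identities.} Testing with $u$ is legitimate because $u\in X_\omega$ and \eqref{eq:uinL1} gives $A<\infty$. This yields
\[
A+\omega C=F.
\]
The Pohožaev identity is
\[
A=NB+\frac{N\omega}{2}C-\frac{N+\a}{2p}F.
\]
In the proof of Lemma \ref{Pohozaev lemma}, the bound $\lVert\n u\rVert_\infty\le 1-\delta$ was used only to apply dominated convergence in \eqref{eq:LebPoho}. Hypothesis \eqref{eq:uinL1} provides exactly that domination: $|x\cdot\n\eta_\eps|\le C$ and $1-\sqrt{1-|\n u|^2}\le|\n u|^2\le A$-integrand. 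So the identity holds under the present assumptions, whatever the range of $p$. The regularity result \cite[Proposition 2.10]{BIMM} still needs $I_\a*|u|^p\in L^\infty$. For $p$ below the thresholds of Proposition \ref{differentiable of nonlocal}, I would check this directly using $u\in L^\infty$ together with the local integrability of $I_\a$. This is the one point where I expect some care is needed.

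\textbf{Step 2: eliminate $F$ and use the pointwise inequality.} Substituting $F=A+\omega C$ into the Pohožaev identity gives
\[
\Bigl(1+\frac{N+\a}{2p}\Bigr)A=NB+\omega C\Bigl(\frac N2-\frac{N+\a}{2p}\Bigr).
\]
Pointwise, $t:=|\n u|^2\in[0,1)$ satisfies $1-\sqrt{1-t}\le \frac{t}{2\sqrt{1-t}}$ (indeed $1-\sqrt{1-t}=\frac{t}{1+\sqrt{1-t}}$). Hence $B\le A/2$.

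\textbf{Step 3, case $\omega=1$, $p\le \frac{N+\a}{N}$.} Here $\frac N2-\frac{N+\a}{2p}\le 0$. Using $B\le A/2$,
\[
\Bigl(1+\frac{N+\a}{2p}-\frac N2\Bigr)A\le -\Bigl(\frac{N+\a}{2p}-\frac N2\Bigr)C\le 0.
\]
Since $\frac{N+\a}{2p}\ge \frac N2$, the coefficient on the left is at least $1$, so $A\le 0$ and hence $A=0$. Then $\n u=0$, so $u=0$.

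\textbf{Step 3, case $\omega=0$, $p\le\frac{N+\a}{N-2}$.} The identity reduces to $\bigl(1+\frac{N+\a}{2p}\bigr)A=NB$. Here $B\le A/2$ is too weak, since $N/2>1$. Instead I would use the sharper pointwise bound $1-\sqrt{1-t}\le t/2\le \frac{t}{2\sqrt{1-t}}\cdot\sqrt{1-t}$. This gives $NB\le \frac N2\int |\n u|^2$ and $A\ge\int|\n u|^2$, with strict inequality whenever $\n u\ne 0$ on a set of positive measure. Therefore
\[
\Bigl(1+\frac{N+\a}{2p}\Bigr)A\le \frac N2\int|\n u|^2\le \frac N2 A .
\]
The condition $p\le\frac{N+\a}{N-2}$ is equivalent to $1+\frac{N+\a}{2p}\ge\frac N2$. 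Combined with the strictness in $\int|\n u|^2<A$ when $u\ne0$, this forces $A=0$, hence $u=0$.

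The same sharper bound also gives a cleaner argument in the $\omega=1$ case. The main obstacle is justifying Step 1 under the weak hypothesis \eqref{eq:uinL1}, that is, the regularity needed for the test function $\eta_\eps x\cdot\n u$ and the limits of the nonlocal terms following \cite{MVJFA}.
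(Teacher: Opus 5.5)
Your $\omega=1$ case is correct. Because the coefficient $1+\frac{N+\alpha}{2p}-\frac N2$ is at least $1$, it does not even need the equality case of $B\le A/2$. The $\omega=0$ case, however, rests on a false inequality. The bound $1-\sqrt{1-t}\le t/2$ is reversed: $\sqrt{1-t}\le 1-\frac t2$ on $[0,1]$, so $1-\sqrt{1-t}\ge \frac t2$, which is exactly the lower bound in \eqref{equivalent of Psi0}. Hence $NB\le\frac N2\int_{\mathbb R^N}|\nabla u|^2\,dx$ fails. In fact $NB\ge \frac N2\int_{\mathbb R^N}|\nabla u|^2\,dx$, strictly unless $\nabla u=0$ a.e. For the same reason, drop your closing remark that this ``sharper bound'' cleans up the $\omega=1$ case.

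The repair is the bound you discarded as too weak. From $(1+\frac{N+\alpha}{2p})A=NB\le\frac N2A$ you get $(1+\frac{N+\alpha}{2p}-\frac N2)A\le0$, and $p\le\frac{N+\alpha}{N-2}$ says exactly that this coefficient is nonnegative. For $p<\frac{N+\alpha}{N-2}$ the coefficient is positive and $A=0$. At $p=\frac{N+\alpha}{N-2}$ it vanishes, and what you need is the equality case: $\frac A2-B=\frac12\int_{\mathbb R^N}\frac{(1-\sqrt{1-|\nabla u|^2})^2}{\sqrt{1-|\nabla u|^2}}\,dx$, so $NB=\frac N2A$ forces $\nabla u=0$ a.e. This is the paper's argument. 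It treats both values of $\omega$ at once through \eqref{eq:nonexistence-combined}: the coefficient of $A$ is at least $\frac N2$, the $\omega$-term is nonnegative, and the identity $\frac{t}{\sqrt{1-t}}-2(1-\sqrt{1-t})=\frac{(1-\sqrt{1-t})^2}{\sqrt{1-t}}$ concludes.

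A minor point on Step 1, which the paper also only sketches. Knowing $u\in L^\infty$ and that $I_\alpha$ is locally integrable controls only the near-field part of $I_\alpha*|u|^p$. The far field needs $|u|^p\in L^s$ for some $s<N/\alpha$, and $u\in X_\omega$ alone does not give this for small $p$. So finiteness of the nonlocal terms has to be part of what ``weak solution'' means here.
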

	\begin{proof}
		Suppose that \(u\in X_\omega\) is a weak solution to \eqref{eq}. By the regularity result, see for instance \cite[Proposition 2.10]{BIMM}, and by a standard approximation argument, we may test \eqref{eq} by \(u\). This gives
		\begin{equation}\label{Nehari}
			\int_{\mathbb R^N}
			\frac{|\nabla u|^2}{\sqrt{1-|\nabla u|^2}}\,dx
			+
			\omega\int_{\mathbb R^N}|u|^2\,dx
			=
			\int_{\mathbb R^N}
			\big(I_\alpha * |u|^p\big)|u|^p\,dx.
		\end{equation}
		Thanks to the additional assumption \eqref{eq:uinL1},
		the proof of Lemma \ref{Pohozaev lemma} can be repeated in the present range of \(p\). Hence the Poho\v{z}aev identity holds, namely
		\begin{equation}\label{Pohozaev critical exp}
			\int_{\mathbb R^N}
			\frac{|\nabla u|^2}{\sqrt{1-|\nabla u|^2}}\,dx
			=
			N\int_{\mathbb R^N}
			\left(
			1-\sqrt{1-|\nabla u|^2}
			+\frac{\omega}{2}|u|^2
			\right)\,dx
			-
			\frac{N+\alpha}{2p}
			\int_{\mathbb R^N}
			\big(I_\alpha * |u|^p\big)|u|^p\,dx .
		\end{equation}
		Combining \eqref{Nehari} and \eqref{Pohozaev critical exp}, we obtain
		\begin{equation}\label{eq:nonexistence-combined}
			\begin{aligned}
				0
				&=
				\frac{N+\alpha+2p}{2p}
				\int_{\mathbb R^N}
				\frac{|\nabla u|^2}{\sqrt{1-|\nabla u|^2}}\,dx
				-
				N\int_{\mathbb R^N}
				\left(1-\sqrt{1-|\nabla u|^2}\right)\,dx  \\
				&\quad
				+
				\frac{N+\alpha-Np}{2p}
				\omega
				\int_{\mathbb R^N}|u|^2\,dx .
			\end{aligned}
		\end{equation}
		If \(\omega=1\), then \(p\leq \frac{N+\alpha}{N}\), and therefore
		\[
		\frac{N+\alpha-Np}{2p}\geq0,
		\qquad
		\frac{N+\alpha+2p}{2p}\geq\frac N2.
		\]
		Thus, from \eqref{eq:nonexistence-combined},
		\[
		0
		\geq
		\frac N2
		\int_{\mathbb R^N}
		\left[
		\frac{|\nabla u|^2}{\sqrt{1-|\nabla u|^2}}
		-
		2\left(1-\sqrt{1-|\nabla u|^2}\right)
		\right]\,dx.
		\]
		If \(\omega=0\), then \(p\leq \frac{N+\alpha}{N-2}\), and hence
		\[
		\frac{N+\alpha+2p}{2p}\geq\frac N2.
		\]
		Again, \eqref{eq:nonexistence-combined} yields the same inequality:
		\[
		0
		\geq
		\frac N2
		\int_{\mathbb R^N}
		\left[
		\frac{|\nabla u|^2}{\sqrt{1-|\nabla u|^2}}
		-
		2\left(1-\sqrt{1-|\nabla u|^2}\right)
		\right]\,dx.
		\]
		For \(t\in[0,1)\), we have
		\[
		\frac{t}{\sqrt{1-t}}
		-
		2(1-\sqrt{1-t})
		=
		\frac{(1-\sqrt{1-t})^2}{\sqrt{1-t}}
		\geq0.
		\]
		Taking \(t=|\nabla u|^2\), we infer that \(\nabla u=0\) a.e. in \(\mathbb R^N\). Since \(u\in X_\omega\hookrightarrow C_0(\mathbb R^N)\), it follows that \(u\equiv0\).
	\end{proof}
	
	Note that, in particular, Corollary \ref{cor:nonexistence} shows the nonexistence of nontrivial solutions in the Hardy--Littlewood--Sobolev critical cases, namely when
	\[
	p=\frac{N+\alpha}{N}
	\quad\text{if } \omega=1,
	\qquad
	p=\frac{N+\alpha}{N-2}
	\quad\text{if } \omega=0.
	\]	
	Indeed, in this case, condition \eqref{eq:uinL1} is automatically satisfied; see Lemma \ref{Pohozaev lemma}.
	
	\section{Concentration-compactness and profile decomposition}\label{sec CCP and PD}

	Throughout this section, we assume that
	\[
	p>\frac{N+\alpha}{N}
	\quad\text{if } \omega=1,
	\qquad
	p>\frac{N+\alpha}{N-2}
	\quad\text{if } \omega=0.
	\]

	\begin{lemma}\label{vanishing lemma}
		Suppose that $(u_n)_{n\in\N}\subset X_\omega$ is bounded and there exists $r>0$ such that
		\begin{equation*}
			\limsup_{n\to\infty}\sup_{y\in\RN} \int_{B(y,r)} |u_n|^q\, dx =0,
		\end{equation*}
		where 
		\begin{equation}\label{exp in two cases}
			q=\begin{cases}
				2\qquad\text{if }\omega=1,\\
				2^*\qquad \text{if }\omega=0.
			\end{cases}
		\end{equation}
		Then it holds that
		\begin{equation*}
			\lim_{n\to \infty}\irn (I_{\a}* |u_n|^p) |u_n|^{p}\, dx=0.
		\end{equation*}
	\end{lemma}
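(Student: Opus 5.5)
The plan is to reduce the nonlocal statement to a local Lions-type vanishing result via the Hardy--Littlewood--Sobolev inequality. With $s:=\frac{2N}{N+\alpha}$, HLS gives
\[
\irn (I_\alpha*|u_n|^p)|u_n|^p\,dx \lesssim \big\||u_n|^p\big\|_{s}^2=\lVert u_n\rVert_{ps}^{2p}.
\]
So it suffices to show $u_n\to 0$ in $L^{ps}(\RN)$. The hypotheses on $p$ place $ps$ strictly inside the admissible range. If $\omega=1$, then $p>\frac{N+\alpha}{N}$ gives $ps>\frac{2N+2\alpha}{N+\alpha}=2$. If $\omega=0$, then $p>\frac{N+\alpha}{N-2}$ gives $ps>\frac{2N}{N-2}=2^*$. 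In both cases $ps\in(q,\infty)$, with $q$ as in \eqref{exp in two cases}.

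The key step is a Lions lemma adapted to $X_\omega$: if $(u_n)$ is bounded in $X_\omega$ and $\sup_y\int_{B(y,r)}|u_n|^q\to0$, then $u_n\to0$ in $L^t(\RN)$ for every $t\in(q,\infty)$. I would prove it as follows.

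\begin{itemize}
\item By Proposition \ref{embedding}, $(u_n)$ is bounded in $W^{1,\tilde q}(\RN)$ for some $\tilde q$ with $\tilde q> N$; take $\tilde q=r$, the fixed exponent in $\lVert\cdot\rVert_{X_\omega}$. It is also bounded in $L^q\cap L^\infty$.
\item Cover $\RN$ by balls $B(y_k,r)$ with bounded overlap.
\item On each ball, Gagliardo--Nirenberg/Morrey interpolation gives, for some $\theta\in(0,1)$,
\[
\lVert u\rVert_{L^\infty(B)}\lesssim \lVert u\rVert_{L^q(B)}^{\theta}\lVert u\rVert_{W^{1,r}(B)}^{1-\theta}.
\]
Hence $\sup_x|u_n(x)|\to0$.
\item Combine this with boundedness in $L^q$:
\[
\lVert u_n\rVert_t^t\le \lVert u_n\rVert_\infty^{t-q}\lVert u_n\rVert_q^q\to0 .
\]
\end{itemize}

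For $\omega=0$, the boundedness of $u_n$ in $L^{2^*}$ comes from $X_0\hookrightarrow L^{2^*}$ (Proposition \ref{embedding}\ref{embedding of Lp}). For $\omega=1$ it is simply boundedness in $L^2$. An alternative route is the classical Lions lemma: split $\int_{B}|u|^{t}\le \lVert u\rVert_{L^q(B)}^{t-\sigma}\lVert u\rVert_{L^{\sigma'}(B)}^{\ldots}$ and sum over balls, using the local Sobolev embedding $W^{1,2}(B)\hookrightarrow L^{2^*}(B)$. The $L^\infty$ route is cleaner here, since $r>N$.

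The main obstacle is the uniform local estimate on balls. One must check that the constant in the Morrey/Gagliardo--Nirenberg inequality on $B(y,r)$ is independent of $y$, which holds by translation invariance. One must also check that the interpolation is correct: a Morrey-type bound $\lVert u\rVert_{L^\infty(B)}\lesssim \lVert u\rVert_{L^q(B)}+\lVert\n u\rVert_{L^r(B)}$ alone would not give smallness. To get a genuine product form, I would apply the Gagliardo--Nirenberg inequality to $u\chi$ for a cutoff $\chi$ adapted to each ball, or argue via the oscillation: by Morrey's estimate $u_n$ is uniformly Hölder continuous. If $|u_n(x_n)|\ge\delta$, then $|u_n|\ge\delta/2$ on $B(x_n,\rho)$ for a fixed $\rho$ (take $\rho\le r$), contradicting $\int_{B(x_n,r)}|u_n|^q\to0$.

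This Hölder-continuity contradiction argument is the step I would actually use, since it avoids the interpolation inequality altogether. Once $\lVert u_n\rVert_\infty\to0$, the interpolation bound together with HLS concludes the proof.
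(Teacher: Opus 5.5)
Your argument is correct, but it reaches the key vanishing step by a different mechanism than the paper. Both proofs start from the same Hardy--Littlewood--Sobolev reduction to $\lVert u_n\rVert_{2Np/(N+\a)}\to0$, using $2Np/(N+\a)>q$. The paper then quotes a quantitative Lions-type interpolation in the spirit of \cite[Lemma 1.21]{Willem},
\[
\irn |u_n|^{\frac{2Np}{N+\a}}\,dx\leq C\Big(\sup_{y\in\RN}\int_{B(y,r)}|u_n|^q\,dx\Big)^{1-\t}\lVert u_n\rVert_{X_\omega}^{\t s}.
\]
This comes from H\"older interpolation and local Sobolev embeddings on balls, summed over a covering, and it does not rely on the gradient exponent exceeding $N$.

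You instead use the fact that the $X_\omega$-norm controls $\lVert \n u\rVert_{r}$ with $r>N$. Morrey's inequality then makes $(u_n)$ equi-H\"older continuous, and your contradiction argument gives $\lVert u_n\rVert_\infty\to0$. The bound $\lVert u_n\rVert_t^t\leq \lVert u_n\rVert_\infty^{t-q}\lVert u_n\rVert_q^q$ finishes the proof, with $\lVert u_n\rVert_q$ bounded by the norm when $\omega=1$ and by the Sobolev inequality when $\omega=0$. This is more elementary, and it even gives vanishing in $L^\infty$.

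What the paper's product-form estimate buys is reusability. It is applied again in Lemma \ref{profile decomposition} to bound $\cF_{\a,p}(v_n^m)\leq C\delta_m^{(1-\t)(N+\a)/N}$ uniformly in $m$, which a purely qualitative ``$\to0$'' statement does not provide. Your route can be made quantitative at no cost. If $\lVert\n u\rVert_r\leq M$ and $|u(x_0)|=A$, then $|u|\geq A/2$ on a ball of radius $\rho\sim\min\{R,(A/M)^{r/(r-N)}\}$. Hence $\sup_y\int_{B(y,R)}|u|^q\,dx\gtrsim A^q\rho^N$. This bounds $\lVert u\rVert_\infty$ by a positive power of that supremum, with constants depending only on $M$, $R$ and a bound for $\lVert u\rVert_q$.

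Finally, you use the letter $r$ both for the ball radius in the hypothesis and for the exponent in $\lVert\cdot\rVert_{X_\omega}$. Call the radius something else (e.g.\ $R$, with $\rho\leq R$) to avoid confusion.
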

	\begin{proof}
		In view of Proposition \ref{embedding}, the sequence $(u_n)$ is bounded in $L^s(\mathbb R^N)$ for every $s\geq2$ if $\omega=1$, and for every $s\geq2^*$ if $\omega=0$. Note that $\frac{2Np}{N+\a}>q$. Then, arguing as in \cite[Lemma 1.21]{Willem},  one has 
		\begin{equation}\label{fine estimate for nonlocal Lp}
			\irn |u_n|^{\frac{2Np}{N+\a}}\leq C\left(\sup_{y\in\RN} \int_{B(y,r)} |u_n|^q\, dx\right)^{1-\t} \lVert u_n\rVert_{X_\omega}^{\t s}
		\end{equation}
		for some fixed $s>\frac{2Np}{N+\a}$, some positive constant $C$ and $\t$.
		Thus, Hardy-Littlewood-Sobolev inequality gives
		\begin{equation*}
			\begin{aligned}
				\irn (I_{\a}* |u_n|^p) |u_n|^{p}\, dx&\leq C\left(\irn |u_n|^{\frac{2Np}{N+\a}}\right)^{\frac{N+\a}{N}}\\
				&\leq C\left(\sup_{y\in\RN} \int_{B(y,r)} |u_n|^q\, dx\right)^{\frac{(1-\t)(N+\a)}{N}} \lVert u_n\rVert_{X_\omega}^{\frac{(N+\a)\t s}{N}}\to 0
			\end{aligned}
		\end{equation*}
		as $n\to \infty$.
	\end{proof}
	
	The following is a variant version of the 
	Br\'{e}zis-Lieb lemma \cite{BL}.
	\begin{lemma}\label{Brezis-Lieb}
		Suppose that $(u_n)$ is bounded in $X_\omega$ and $u_n\to u_0$ a.e. on $\RN$. Then we have
		\begin{equation*}
			\lim_{n\to\infty}\left(\irn (I_{\a}* |u_n|^p) |u_n|^{p}\, dx - \irn (I_{\a}* |u_n-u_0|^p)|u_n-u_0|^{p}\, dx \right) =\irn (I_{\a}*|u_0|^{p})|u_0|^{p}\, dx 
		\end{equation*}
	\end{lemma}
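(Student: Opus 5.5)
The plan is to reduce everything to the classical Brézis–Lieb lemma in $L^{s}$, with $s:=\frac{2N}{N+\a}$, combined with the Hardy–Littlewood–Sobolev (HLS) inequality. This is the standard route of Moroz–Van Schaftingen for Choquard nonlocal terms. Write $v_n:=u_n-u_0$. By Proposition \ref{embedding}, $(u_n)$ is bounded in $L^{ps}(\RN)$, since $ps=\frac{2Np}{N+\a}>q$ with $q$ as in \eqref{exp in two cases}. By Fatou's lemma, $u_0\in L^{ps}$ and $(v_n)$ is bounded in $L^{ps}$ as well.

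\emph{Step 1: local Brézis–Lieb.} Apply the Brézis–Lieb lemma in its general form, as in \cite{BL}, to $j(t)=|t|^p$: for $f_n\to 0$ a.e. and bounded in $L^{ps}$,
\[
\big||f_n+u_0|^p-|f_n|^p\big|\le \eps|f_n|^p+C_\eps|u_0|^p .
\]
With $f_n=v_n$ and exponent $s$, this gives
\[
|u_n|^p-|v_n|^p\to |u_0|^p \quad\text{strongly in } L^{s}(\RN).
\]
The ingredients are the truncated functions $(||u_n|^p-|v_n|^p-|u_0|^p|-\eps|v_n|^p)^+$, dominated convergence, and the uniform $L^{s}$ bound on $|v_n|^p$.

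\emph{Step 2: algebraic splitting.} Write $a_n:=|u_n|^p$, $b_n:=|v_n|^p$, $a:=|u_0|^p$, and use the bilinear form $\langle f,g\rangle:=\irn (I_\a*f)g\,dx$, which is bounded on $L^s\times L^s$ by HLS. Then
\[
\langle a_n,a_n\rangle-\langle b_n,b_n\rangle=\langle a_n-b_n,a_n-b_n\rangle+2\langle a_n-b_n,b_n\rangle .
\]
By Step 1 and HLS, the first term tends to $\langle a,a\rangle=\irn(I_\a*|u_0|^p)|u_0|^p$. For the second term,
\[
\langle a_n-b_n,b_n\rangle=\langle a_n-b_n-a,b_n\rangle+\langle a,b_n\rangle .
\]
The first piece tends to $0$ by HLS, Step 1, and the boundedness of $b_n$ in $L^s$.

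\emph{Step 3 (main point): $\langle a,b_n\rangle\to0$.} Since $b_n$ is bounded in the reflexive space $L^s$ and $b_n\to0$ a.e., every subsequence has a further subsequence converging weakly in $L^s$, and the a.e. limit identifies the weak limit as $0$. Hence $b_n\weakto 0$ in $L^s$ for the whole sequence. The map $g\mapsto\langle a,g\rangle=\irn (I_\a*a)g$ is a continuous linear functional on $L^s$, because $I_\a*a\in L^{s'}=L^{\frac{2N}{N-\a}}$ by HLS. Therefore $\langle a,b_n\rangle\to0$. Combining Steps 1–3 yields the claim.

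The only delicate point is Step 1, namely checking that the Brézis–Lieb conclusion holds in $L^{s}$ with power $p$ rather than in the usual form. This follows from the $\eps$-inequality above together with Fatou's lemma, so it is routine.
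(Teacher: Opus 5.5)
Your proposal is correct and follows essentially the same route as the paper. Both arguments use the same bilinear splitting $\langle a_n,a_n\rangle-\langle b_n,b_n\rangle=\langle a_n-b_n,a_n-b_n\rangle+2\langle a_n-b_n,b_n\rangle$, the Br\'{e}zis--Lieb convergence $|u_n|^p-|u_n-u_0|^p\to|u_0|^p$ in $L^{2N/(N+\alpha)}$, the Hardy--Littlewood--Sobolev inequality, and the weak convergence $|u_n-u_0|^p\rightharpoonup 0$ in $L^{2N/(N+\alpha)}$ to dispose of the cross term. The differences are cosmetic: you prove the $L^{2N/(N+\alpha)}$ form of Br\'{e}zis--Lieb by the $\eps$-truncation argument, which the paper only cites, and you split the cross term explicitly rather than pairing a sequence converging strongly in $L^{2N/(N-\alpha)}$ against a weakly null one.
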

	\begin{proof}
		From Proposition \ref{embedding} \ref{embedding of Lp} and the assumptions, we know that $(|u_n|^{p})$ is bounded in $L^{\frac{2N}{N+\a}}(\RN)$.
		Note that, for every $n\in \N$,
		\begin{equation*}
			\begin{aligned}
				\irn &(I_{\a}* |u_n|^p) |u_n|^{p}\, dx - \irn (I_{\a}* |u_n-u_0|^p)|u_n-u_0|^{p}\, dx \\
				&= \irn (I_{\a}*( |u_n|^p- |u_n-u_0|^{p}))( |u_n|^{p}- |u_n-u_0|^{p})\, dx \\
				& + 2 \irn (I_{\a}* (|u_n|^{p}-|u_n-u_0|^p))|u_n-u_0|^{p}\, dx.
			\end{aligned}
		\end{equation*}
		By classical Br\'{e}zis-Lieb Lemma (see, for example, \cite[Lemma 1.32]{Willem}), one has $|u_n|^p- |u_n-u_0|^{p}\to |u_0|^{p}$ strongly in $L^{\frac{2N}{N+\a}}(\RN)$ as $n\to\infty$. Then the Hardy-Littlewood-Sobolev inequality implies that $I_{\a}*( |u_n|^p- |u_n-u_0|^{p})\to I_{\a}*|u_0|^{p}$ strongly in $L^{\frac{2N}{N-\a}}(\RN)$ as $n\to\infty$. Since \(u_n-u_0\to0\) a.e. on $\RN$ and \((u_n-u_0)\) is bounded in
		\(L^{\frac{2Np}{N+\alpha}}(\R^N)\), we have
		\[
		|u_n-u_0|^p\rightharpoonup0
		\quad\text{weakly in }
		L^{\frac{2N}{N+\alpha}}(\R^N).
		\] as $n\to\infty$, which reaches the conclusion.
	\end{proof}
	
	Define that 
	\begin{equation}\label{auxillary Psi1}
		\Psi_1(u)=\Psi_0(u)+\omega\frac{\lVert u\rVert_{2}^{2}}{2}=\int_{\mathbb{R}^N}\left( 1 - \sqrt{1 - |\nabla u|^2}\right)\, dx+\omega\frac{\lVert u\rVert_{2}^{2}}{2}.
	\end{equation}
	Next we show the profile decomposition for the functional.
	
	\begin{lemma}\label{profile decomposition}
		Suppose that $(u_n)_{n\in \N}$ is a bounded sequence in $X_\omega$. Then there exists $k\in \N\cup \{+\infty\}$, $(y_{n}^{i})_{n\in\N}\subset\RN$ and $\tilde{u}_{i}\in X_\omega$ for $0\leq i < k+1$\footnote{To avoid ambiguity, we make the convention that if $k=+\infty$, then $k+1=+\infty$.} such that
		\begin{equation*}
			\begin{aligned}
				u_{n}(\cdot+y_{n}^{i})&\weakto \tilde{u}_{i}\qquad\text{{\rm weakly in }}X_\omega,\, \text{{\rm as }}n\to\infty; \\
				\lim_{n\to \infty}|y_{n}^{i}-y_{n}^{j}|&= +\infty\qquad\text{{\rm for }}i\neq j;\\
				\liminf_{n\to\infty}\Psi_1(u_n)&\geq \sum_{i=0}^{k}\Psi_1(\tilde{u}_{i})\quad\left(\liminf_{n\to\infty}\Psi_0(u_n)\geq \sum_{i=0}^{k}\Psi_0(\tilde{u}_{i})\right);\\
				\lim_{n\to \infty}\cF_{\a,p}(u_n) &= \sum_{i=0}^{k}\cF_{\a,p}(\tilde{u}_{i}),
			\end{aligned}
		\end{equation*}
		where $\cF_{\a,p}$ is defined by \eqref{nonlocal functional}.
	\end{lemma}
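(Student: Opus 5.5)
We will follow the standard iterative extraction scheme (as in the profile decompositions of \cite{BIM} and of Mederski's earlier works), with the nonlocal term handled by Lemma \ref{Brezis-Lieb} and Lemma \ref{vanishing lemma}.

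\emph{Extraction of profiles.} Set $y_n^0:=0$ and let $\tilde u_0$ be a weak limit of a subsequence of $u_n$ in $X_\omega$; this exists because $X_\omega$ is reflexive (Proposition \ref{embedding}). By the local compactness of $X_\omega\hookrightarrow L^q_{\rm loc}$ (from $X_\omega\hookrightarrow W^{1,r}_{\rm loc}$ with $r>N$), we also have $u_n\to\tilde u_0$ a.e. Inductively, suppose $\tilde u_0,\dots,\tilde u_{i-1}$ and $(y_n^j)_{j<i}$ have been found. Put $v_n^{i}:=u_n-\sum_{j<i}\tilde u_j(\cdot-y_n^j)$ and
\[
\delta_i:=\limsup_{n\to\infty}\sup_{y\in\RN}\int_{B(y,1)}|v_n^i|^q\,dx,
\]
with $q$ as in \eqref{exp in two cases}. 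If $\delta_i=0$, stop with $k=i-1$. Otherwise choose $y_n^i$ with $\int_{B(y_n^i,1)}|v_n^i|^q\ge\delta_i/2$. Then $v_n^i(\cdot+y_n^i)\weakto \tilde u_i\neq0$, by local compactness. Since $v_n^i(\cdot+y_n^j)\weakto0$ for $j<i$, it follows that $|y_n^i-y_n^j|\to\infty$. The orthogonality of the translations also gives $u_n(\cdot+y_n^i)\weakto\tilde u_i$. A diagonal subsequence is taken at each step.

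\emph{The inequality for $\Psi_1$.} This is the step I expect to be the main obstacle, because the integrand $1-\sqrt{1-|\xi|^2}$ is not quadratic: it is convex, with value $+\infty$ outside the unit ball, so no exact Brezis–Lieb splitting is available. I would not try to split $\Psi_0$ directly. Instead I would use weak lower semicontinuity on disjoint regions.

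Fix $K<k+1$ finite and $R>0$. For $n$ large the balls $B(y_n^i,R)$, $i\le K$, are disjoint. Since the integrand is nonnegative,
\[
\Psi_1(u_n)\ge\sum_{i=0}^K\int_{B(0,R)}\Big(1-\sqrt{1-|\nabla u_n(\cdot+y_n^i)|^2}+\tfrac\omega2|u_n(\cdot+y_n^i)|^2\Big)dx .
\]
The functional $w\mapsto\int_{B_R}(1-\sqrt{1-|\nabla w|^2})$ is convex and lower semicontinuous on $W^{1,2}(B_R)$, hence weakly lower semicontinuous. Taking $\liminf_n$ and then letting $R\to\infty$ and $K\to k$ gives
\[
\liminf_{n\to\infty}\Psi_1(u_n)\ge\sum_{i=0}^k\Psi_1(\tilde u_i).
\]
The same argument applies verbatim to $\Psi_0$. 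A consequence is that $\sum_i\Psi_1(\tilde u_i)<\infty$. Since $\Psi_1(w)\gtrsim\|\nabla w\|_2^2+\omega\|w\|_2^2$, and since $\delta_i\lesssim\|\tilde u_i\|_{L^q}^q$ (bounded below via the local-compactness estimate), we obtain $\delta_i\to0$ whenever $k=\infty$. When $\omega=0$ this last bound uses the Sobolev embedding, $\|\tilde u_i\|_{2^*}\lesssim\|\nabla\tilde u_i\|_2$.

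\emph{Convergence of $\cF_{\a,p}$.} Applying Lemma \ref{Brezis-Lieb} iteratively, after translating by $y_n^i$ (where $v_n^i(\cdot+y_n^i)\to\tilde u_i$ a.e. and the sequence is bounded), yields
\[
\cF_{\a,p}(u_n)=\sum_{i=0}^{K}\cF_{\a,p}(\tilde u_i)+\cF_{\a,p}(v_n^{K+1})+o(1).
\]
Here I use that $\cF_{\a,p}$ is translation invariant and that $v_n^{i+1}(\cdot+y_n^i)=v_n^i(\cdot+y_n^i)-\tilde u_i$ up to terms converging to zero a.e. Moreover $\sup_y\int_{B(y,1)}|v_n^{K+1}|^q$ is eventually bounded by $\delta_{K+1}\to0$. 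The estimate \eqref{fine estimate for nonlocal Lp} with HLS then gives
\[
\limsup_n\cF_{\a,p}(v_n^{K+1})\le C\,\delta_{K+1}^{(1-\t)(N+\a)/N}\to0,
\]
or $=0$ outright if the process stopped, by Lemma \ref{vanishing lemma}. The tail $\sum_{i>K}\cF_{\a,p}(\tilde u_i)$ is controlled by the summability of $\|\tilde u_i\|^2$ and interpolation. Together these give
\[
\lim_{n\to\infty}\cF_{\a,p}(u_n)=\sum_{i=0}^{k}\cF_{\a,p}(\tilde u_i).
\]
A technical point to handle carefully is that $(v_n^i)$ remains bounded in $X_\omega$ uniformly in $i$, which is needed to apply \eqref{fine estimate for nonlocal Lp} with uniform constants.
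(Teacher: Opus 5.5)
Your scheme is the paper's: the same iterative extraction driven by the concentration quantities $\delta_i$, the same disjoint-ball weak lower semicontinuity argument for $\Psi_1$ and $\Psi_0$, and the same iterated use of Lemma~\ref{Brezis-Lieb} with \eqref{fine estimate for nonlocal Lp} for $\cF_{\a,p}$. What is missing is the step you flag and leave open. When $k=+\infty$, the estimate $\limsup_n\cF_{\a,p}(v_n^{K+1})\le C\delta_{K+1}^{(1-\t)(N+\a)/N}$ requires $\limsup_n\lVert v_n^{K+1}\rVert_{X_\omega}\le C$ with $C$ independent of $K$. Without this, the remainder is not shown to vanish. There is also a defect in your proof that $\delta_i\to0$. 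It uses $\sum_i\Psi_1(\tilde u_i)<\infty$, hence $\liminf_n\Psi_1(u_n)<\infty$, which is not a hypothesis of the lemma. If $u_n\notin D(\Psi_0)$, then $\Psi_1(u_n)=+\infty$, yet the conclusion for $\cF_{\a,p}$ must still hold.

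Both points are fixed by applying your disjoint-ball argument to the pieces of the norm instead of to $\Psi_1$. The functionals $\omega\lVert\cdot\rVert_{L^2(B_R)}^2$, $\lVert\n\cdot\rVert_{L^2(B_R)}^2$ and $\lVert\n\cdot\rVert_{L^r(B_R)}^r$ are weakly lower semicontinuous, so you get
$\sum_i(\omega\lVert\tilde u_i\rVert_2^2+\lVert\n\tilde u_i\rVert_2^2)\le\sup_n\lVert u_n\rVert_{X_\omega}^2$ and $\sum_i\lVert\n\tilde u_i\rVert_r^r\le\sup_n\lVert u_n\rVert_{X_\omega}^r$.

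\begin{itemize}
\item The first bound gives $\delta_i\lesssim\lVert\tilde u_i\rVert_q^q\to0$, as you intended. The paper instead gets $\sum_i\delta_i<\infty$ from the classical Br\'ezis--Lieb splitting of $\lVert u_n\rVert_q^q$, which also needs only boundedness.
\item For the uniform bound, take finitely many profiles with $|y_n^i-y_n^j|\to\infty$. The quantities $\lVert\cdot\rVert_2^2$, $\lVert\n\cdot\rVert_2^2$ and $\lVert\n\cdot\rVert_r^r$ of $\sum_{i\le K}\tilde u_i(\cdot-y_n^i)$ converge to the sums of the corresponding quantities of the $\tilde u_i$ (approximate by compactly supported functions). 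Hence $\limsup_n\lVert\sum_{i\le K}\tilde u_i(\cdot-y_n^i)\rVert_{X_\omega}^2\le2\sup_n\lVert u_n\rVert_{X_\omega}^2$, and so $\limsup_n\lVert v_n^{K+1}\rVert_{X_\omega}\le(1+\sqrt2)\sup_n\lVert u_n\rVert_{X_\omega}$.
\item Work with these powers and not with $\lVert\cdot\rVert_{X_\omega}$ itself, which is not additive over separated profiles. For $f\neq0$, $\lVert f+f(\cdot-y_n)\rVert_{X_\omega}$ tends to at most $\sqrt2\lVert f\rVert_{X_\omega}$. The paper's write-up of this step asserts additivity of the norm itself, which is not literally correct; the power-wise version is what makes the step work.
\item The tail $\sum_{i>K}\cF_{\a,p}(\tilde u_i)$ needs no interpolation. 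The iterated Br\'ezis--Lieb identity and $\cF_{\a,p}\ge0$ already give $\sum_{i\le K}\cF_{\a,p}(\tilde u_i)\le\liminf_n\cF_{\a,p}(u_n)$ for every $K$.
\end{itemize}
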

	\begin{proof}
		Suppose that, up to a subsequence, $u_n\weakto \tilde{u}_{0}$ weakly in $X_\omega$ and $u_n(x)\to \tilde{u}_{0}(x)$ a.e. on $\RN$. Let $v_{n}^{0}:=u_n-\tilde{u}_0$ and 
		\begin{equation*}
			\delta_{0}:= \limsup_{n\to\infty}\sup_{y\in\RN}\int_{B(y,1)} |v_{n}^{0}|^q\, dx,
		\end{equation*}
		where $q$ satisfies \eqref{exp in two cases}. When $\delta_{0}=0$, we then deduce by Lemma \ref{vanishing lemma} and \ref{Brezis-Lieb}  that $\cF_{\a,p}(u_n)\to \cF_{\a,p}(\tilde{u}_0)$ as $n\to \infty$. Moreover, observe that the functionals $\Psi_1$ and $\Psi_0$ are convex and lower semicontinuous, it holds that $\liminf\limits_{n\to\infty}\Psi_1(u_n)\geq\Psi_1(\tilde{u}_0)$ and $\liminf\limits_{n\to\infty}\Psi_0(u_n)\geq\Psi_0(\tilde{u}_0)$. Thus, it is sufficient to take $k=0$ and $y_{n}^{0}=0$ for all $n\in \N$ to obtain the assertion. 
		
		Otherwise, if $\delta_{0}>0$, one may find a sequence $(y_{n}^{1})_{n\in\N}\subset\RN$ satisfying $|y_{n}^{1}|\to+\infty$ such that 
		\[
		\int_{B(y_{n}^{1},1)}|v_{n}^{0}|^{q}\, dx\to \delta_0, \qquad\text{as }n\to \infty.
		\]
		Hence, we may suppose that, up to a subsequence, $v_{n}^{0}(\cdot+y_{n}^{1})\weakto \tilde{u}_{1}\not\equiv 0$ weakly in $X_\omega$ and $v_{n}^{0}(x+y_{n}^{1})\to \tilde{u}_{1}(x)$ a.e. on $\RN$. Since $v_{n}^{0}= u_{n}-\tilde{u}_0$,  we obtain  that $u_{n}(\cdot+y_{n}^{1})\weakto \tilde{u}_{1}$ weakly in $X_\omega$. Now we set $v_{n}^{1}:=v_{n}^{0}-\tilde{u}_1(\cdot-y_{n}^{1})=u_{n}-\tilde{u}_0-\tilde{u}_1(\cdot-y_{n}^{1})$ and 
		\begin{equation*}
			\delta_{1}:= \limsup_{n\to\infty}\sup_{y\in\RN}\int_{B(y,1)} |v_{n}^{1}|^q\, dx.
		\end{equation*}
		If $\delta_{1}=0$, we can conclude by letting $k=1$, $(y_{n}^{0})$, $(y_{n}^{1})$ as above. Indeed, note that Lemma \ref{Brezis-Lieb} gives
		\begin{equation*}
			\cF_{\a,p}(u_n)=\cF_{\a,p}(\tilde{u}_0)+\cF_{\a,p}(v_{n}^{0})+o(1)=\cF_{\a,p}(\tilde{u}_0)+\cF_{\a,p}(v_{n}^{1})+\cF_{\a,p}(\tilde{u}_{1})+o(1).
		\end{equation*}
		Again by  Lemma \ref{vanishing lemma}  we have $\cF_{\a,p}(u_{n})\to \cF_{\a,p}(\tilde{u}_1)+\cF_{\a,p}(\tilde{u}_0)$ as $n\to \infty$, which gives the assertion of nonlocal term. For $\Psi_1$, we first write, for any domain $\Omega\subset\RN$,
		\begin{equation*}
			\Psi_{\Omega} (u):=\int_{\Omega} \left(1 - \sqrt{1 - |\nabla u|^2}+\omega\frac{|u|^2}{2}\right)\, dx.
		\end{equation*}
		Then, it holds by the fact $|y_{n}^{1}-y_{n}^{0}|=|y_{n}^{1}|\to \infty$ as $n\to \infty$, and Fatou's lemma that
		\begin{equation*}
			\begin{aligned}
				\liminf_{n\to\infty} \Psi_1(u_n) \geq \liminf_{n\to\infty}\sum_{i=0}^{1} \Psi_{B(y_{n}^{i},R)}(u_n)=\liminf_{n\to\infty}\sum_{i=0}^{1} \Psi_{B(0,R)}(u_n(\cdot+y_{n}^{i}))\geq \sum_{i=0}^{1} \Psi_{B(0,R)}(\tilde{u}_{i})
			\end{aligned}
		\end{equation*}
		for any $R>0$. Therefore, we conclude it by letting $R\to\infty$. In addition the same reasoning is still valid for $\Psi_0$.
		
		On the other hand, when $\delta_1>0$, as above we can find another sequence $(y_{n}^{2})_{n\in\N}\subset\RN$ satisfying $|y_{n}^{2}|\to+\infty$ such that 
		\[
		\int_{B(y_{n}^{2},1)}|v_{n}^{1}|^q\, dx\to \delta_1, \qquad\text{as }n\to \infty.
		\]
		Furthermore, we assume that, up to a subsequence,  $v_{n}^{1}(\cdot+y_{n}^{2})\weakto \tilde{u}_{2}\not\equiv 0$ weakly in $X_\omega$ as well as $v_{n}^{1}(x+y_{n}^{2})\to \tilde{u}_{2}(x)$ a.e. on $\RN$, that is, $v_{n}^{0}(\cdot+y_{n}^{2})-\tilde{u}_{1}(\cdot+y_{n}^{2}-y_{n}^{1})\weakto \tilde{u}_{2}$ weakly in $X_\omega$ as $n\to\infty$. Since $\tilde{u}_{2}\nequiv0$, we get that $|y_{n}^{2}-y_{n}^{1}|\to\infty$ and  $u_{n}(\cdot+y_{n}^{2})\weakto \tilde{u}_2$ weakly in $X_\omega$ as $n\to \infty$.  Now, let $v_{n}^{2}:= v_{n}^{1}-\tilde{u}_{2}(\cdot-y_{n}^{2})$ and 
		\begin{equation*}
			\delta_{2}:= \limsup_{n\to\infty}\sup_{y\in\RN}\int_{B(y,1)} |v_{n}^{2}|^q\, dx.
		\end{equation*}
		If $\delta_{2}=0$, we set $k=2$ and thus prove the results as before. If not, that is, $\delta_{2}>0$, then we once more find $(y_{n}^{3})_{n\in\N}\subset \RN$ satisfying $|y_{n}^{3}|\to+\infty$ such that
		\[
		\int_{B(y_{n}^{3},1)}|v_{n}^{2}|^q\, dx\to \delta_2, \qquad\text{as }n\to \infty,
		\] 
		and we repeat the same procedure. In case that there exists some finite $k\in \N$ such that $\delta_{k}=0$, then the almost same discussion yields the conclusions.
		
		We now assume that $k=+\infty$. Based on the construction above, for any $i\in \N$ we have found sequence $(y_{n}^{i})_{n\in\N}\subset\RN$ and $\tilde{u}_{i}\in X_\omega\backslash\{0\}$ such that $u_{n}(\cdot+y_{n}^{i})\weakto \tilde{u}_{i}$ weakly in $X_\omega$ as $n\to\infty$ and $|y_{n}^{i}-y_{n}^{j}|\to \infty$ with $i\neq j$. Using the same notation above, for every finite $m\in \N$ and $R>0$, we exploit Fatou's lemma to obtain 
		\begin{equation}\label{lower semi estimate}
			\begin{aligned}
				\liminf_{n\to\infty} \Psi_1(u_n) \geq \liminf_{n\to\infty}\sum_{i=0}^{m} \Psi_{B(y_{n}^{i},R)}(u_n)=\liminf_{n\to\infty}\sum_{i=0}^{m} \Psi_{B(0,R)}(u_n(\cdot+y_{n}^{i}))\geq \sum_{i=0}^{m} \Psi_{B(0,R)}(\tilde{u}_{i}).
			\end{aligned}
		\end{equation}
		Hence the conclusion for $\Psi_1$ holds by letting $R\to\infty$ and $m\to \infty$.  To show the assertion for $\cF_{\a,p}$, first observe from the construction that  
		\begin{equation*}
			\begin{aligned}
				\int_{B(0,1)} |\tilde{u}_{i+1}|^q\, dx=\delta_{i}> \delta_{i+1}=\int_{B(0,1)} |\tilde{u}_{i+2}|^q\, dx, \qquad\text{for }i\in\N.
			\end{aligned}
		\end{equation*}
		We claim that $\delta_{j}\to 0$ as $j\to +\infty$. In fact, by classical Br\'{e}zis-Lieb lemma (e.g. \cite[Lemma 1.32]{Willem}), one has 
		\begin{equation*}
			\lVert u_n\rVert_{q}^q=\sum_{i=0}^{m}\lVert \tilde{u}_{i}\rVert_{q}^{q}+\lVert v_{n}^{m}\rVert_{q}^q+o(1)
		\end{equation*}
		for every $m\in\N$. Therefore, by Proposition \ref{embedding} \ref{embedding of Lp} we have 
		\begin{equation*}
			0<\sum_{i=0}^{m-1}\delta_{i} =\sum_{i=0}^{m-1}\int_{B(0,1)} |\tilde{u}_{i+1}|^q\, dx \leq  \lVert u_n\rVert_{q}^q-\lVert v_{n}^{m}\rVert_{q}^q-\lVert \tilde{u}_{0}\rVert_{q}^q+o(1)\leq \sup_{n\in\N} \lVert u_n\rVert_{X}<+\infty,
		\end{equation*}
		which implies that $\sum_{i=0}^{\infty}\delta_{i}<+\infty$ and $\delta_{j}\to 0$ as $j\to\infty$. 
		
		Next, our goal is to show that sequences $(v_{n}^{m})_{n\in\N}$ are bounded uniformly with respect to every $m\in\N$. To this aim, first note that for every $m\in\N$, 
		\begin{equation*}
			v_{n}^{m}(x)=u_n(x)-\sum_{i=0}^{m}\tilde{u}_{i}(x-y_{n}^{i}),\qquad |y_{n}^{i}-y_{n}^{j}|\to+\infty, \text{ as }n\to\infty.
		\end{equation*}
		It follows that
		\begin{equation*}
			\lim_{n\to\infty}\left\lVert \sum_{i=0}^{m}\tilde{u}_{i}(\cdot-y_{n}^{i})\right\rVert_{X}=\sum_{i=0}^{m}\lVert \tilde{u}_{i}\rVert_{X}.
		\end{equation*}
		On the other hand, from the weak lower semicontinuity of norm, similar reasoning of \eqref{lower semi estimate} yields
		\begin{equation*}
			\sum_{i=0}^{m}\lVert \tilde{u}_{i}\rVert_{X}\leq \liminf_{n\to\infty}\lVert u_n\rVert_{X}
		\end{equation*}
		for every $m\in\N$. Thus, 
		\begin{equation*}
			\limsup_{n\to\infty}\lVert v_{n}^{m}\rVert_{X}\leq \limsup_{n\to\infty}\lVert u_{n}\rVert_{X}+\limsup_{n\to\infty}\left\lVert \sum_{i=0}^{m}\tilde{u}_{i}(\cdot-y_{n}^{i})\right\rVert_{X}\leq 2\limsup_{n\to\infty}\lVert u_{n}\rVert_{X}<C,
		\end{equation*}
		where $C>0$ is independent from $m$. Then, from \eqref{fine estimate for nonlocal Lp} and Hardy-Littlewood-Sobolev inequality we have 
		\begin{equation*}
			\begin{aligned}
				\cF_{\a,p}(v_{n}^{m})&\leq C \left(\irn |v_{n}^{m}|^{\frac{2Np}{N+\a}}\right)^{\frac{N+\a}{N}}\\
				&\leq C\left(\sup_{y\in\RN} \int_{B(y,1)} |v_{n}^{m}|^q\, dx\right)^{\frac{(1-\t)(N+\a)}{N}} \lVert v_{n}^{m}\rVert_{X}^{\frac{(N+\a)\t s}{N}}\\
				&\leq C\delta_{m}^{\frac{(1-\t)(N+\a)}{N}}\to 0\qquad\text{as }m\to\infty.
			\end{aligned}
		\end{equation*}
		Thus, by Lemma \ref{Brezis-Lieb} we have 
		\begin{equation*}
			\cF_{\a,p}(u_n)= \sum_{i=0}^{m}\cF_{\a,p}(\tilde{u}_{i})+\cF_{\a,p}(v_{n}^{m})+o(1)
		\end{equation*}
		and get the desire by letting $m\to+\infty$. 
	\end{proof}
	
	\section{Existence result for positive mass case}\label{sec Positive mass case}
	
	This section is devoted to proving the existence of the ground state solution for $\omega=1$, which is the constrained minimizer on the Pohožaev set $\cM_1$. Throughout this section we always assume that $p>\frac{N+\a}{N}$. We first introduce the following scaling for any $u\in X_1$:
	\begin{equation}\label{scaling}
		u_{t}(x):= tu\left(\frac{x}{t}\right) \qquad\text{for }x\in\R^N.\; t\in (0,+\infty).
	\end{equation}
	It is easy to see that, for any $u\in D(\Psi_0)\backslash\{0\}$ and $t\in (0,+\infty)$, $u_{t}\in D(\Psi_0)\backslash\{0\}$. Moreover, we have
	\begin{equation}\label{fiber function}
		\begin{aligned}
			\cI(u_{t})&= t^N\int_{\mathbb{R}^N} 1 - \sqrt{1 - |\nabla u|^2}\, dx+\frac{t^{N+2}}{2}\lVert u\rVert_{2}^{2}-\frac{t^{N+\a+2p}}{2p}\cF_{\a,p}(u),\\
			\frac{d\cI(u_{t})}{dt}&=t^{N-1}\left(N\int_{\mathbb{R}^N} 1 - \sqrt{1 - |\nabla u|^2}\, dx+\frac{N+2}{2}t^{2}\lVert u\rVert_{2}^{2}-\frac{N+\a+2p}{2p}t^{\a+2p}\cF_{\a,p}(u)\right).
		\end{aligned}
	\end{equation}
	We observe that 
	\begin{equation*}
		\frac{d\cI(u_{t})}{dt}\Big\rvert_{t=1}=\cP_1(u)\qquad\text{for }u\in D(\Psi_0)\backslash\{0\},
	\end{equation*}
	where $\cP_1$ is defined by \eqref{Pohozaev functional}. Next, we investigate the structure of functional under such scaling.
	\begin{lemma}\label{uniqueness of parametre on Pohozaev}
		For each $u\in D(\Psi_0)\backslash\{0\}$, there exists a unique $t_{u}\in (0,+\infty)$, which is a maximum of function $t\mapsto\cI(u_t)$, such that $\cP_1(u_{t_{u}})=0$, i.e., $u_{t_{u}}\in \cM_1$. Therefore, it holds that $\cP_1(u_t)>0$ for $0<t<t_u$ and $\cP_1(u_t)<0$ for $t>t_u$. Moreover, the map $u\mapsto t_{u}$ is continuous on $D(\Psi_0)\backslash\{0\}$.
	\end{lemma}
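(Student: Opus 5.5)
The plan is to work directly with the fiber map from \eqref{fiber function}. Fix $u\in D(\Psi_0)\backslash\{0\}$ and write $A:=\Psi_0(u)$, $B:=\lVert u\rVert_2^2$ and $C:=\cF_{\a,p}(u)$. We have $A<\infty$. Also $B>0$ since $u\neq0$, and $C>0$ since $I_\a>0$ and $|u|^p\not\equiv0$; $A\ge0$ (in fact $A>0$, but only $B,C>0$ are needed). Then
\[
\frac{d}{dt}\cI(u_t)=t^{N-1}h(t),\qquad h(t):=NA+\frac{N+2}{2}Bt^{2}-\frac{N+\a+2p}{2p}Ct^{\a+2p}.
\]
Scaling gives $u_t(x)=tu(x/t)$, so $\nabla u_t(x)=\nabla u(x/t)$ and $\Psi_0(u_t)=t^N\Psi_0(u)$, and similarly for the other terms. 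Hence $\cP_1(u_t)=h(t)$, as one checks by substituting into \eqref{Pohozaev functional}. It therefore suffices to show that $h$ has exactly one positive zero, with $h>0$ before it and $h<0$ after it.

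Since $\a+2p>2$ (because $p>\frac{N+\a}{N}>1$), I would write
\[
h(t)=NA+t^2g(t),\qquad g(t):=\frac{N+2}{2}B-\frac{N+\a+2p}{2p}Ct^{\a+2p-2}.
\]
The function $g$ is strictly decreasing, starts at $g(0^+)>0$ and tends to $-\infty$. So $t^2g(t)$ is positive on $(0,t_0)$ and negative and strictly decreasing on $(t_0,\infty)$, where $t_0$ is the zero of $g$. Thus $h>0$ on $(0,t_0]$, and on $(t_0,\infty)$ the function $h$ is strictly decreasing from $h(t_0)=NA\ge0$ to $-\infty$. This gives a unique zero $t_u\ge t_0$. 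If $A=0$, the zero is $t_u=t_0$, which still works.

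A cleaner alternative is to note that $h'(t)=t\,k(t)$, where $k(t)=(N+2)B-c\,t^{\a+2p-2}$ is decreasing. So $h$ increases and then decreases, with $h(0)=NA\ge0$ and $h\to-\infty$. This again gives exactly one zero, and the sign pattern follows. Since $t^{N-1}>0$, the map $t\mapsto\cI(u_t)$ increases on $(0,t_u)$ and decreases afterwards, so $t_u$ is its unique maximum point. In particular $u_{t_u}\in\cM_1$, because $u_{t_u}\in D(\Psi_0)\backslash\{0\}$.

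For continuity, let $u_n\to u$ in $D(\Psi_0)\backslash\{0\}$ with respect to the $X_1$ norm. The quantities $B_n\to B$ and $C_n\to C$ converge by Proposition \ref{embedding} and Proposition \ref{differentiable of nonlocal}. The delicate point is $A_n=\Psi_0(u_n)$, since $\Psi_0$ is only lower semicontinuous. I expect this to be the main obstacle. The first thing I would try is to prove continuity of $\Psi_0$ along norm-convergent sequences inside $D(\Psi_0)$. The integrand $1-\sqrt{1-s^2}$ on $[0,1]$ is bounded by $s^2$. Since $\nabla u_n\to\nabla u$ in $L^2$, after passing to a subsequence converging a.e., the generalized dominated convergence theorem (with dominants $|\nabla u_n|^2$ converging in $L^1$) gives $A_n\to A$. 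Applying this to every subsequence yields convergence of the full sequence.

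With $A_n\to A$, $B_n\to B$ and $C_n\to C$, where $B,C>0$, the functions $h_n$ converge to $h$ locally uniformly on $[0,\infty)$. Since $h$ changes sign strictly at its simple crossing $t_u$ (where $h'(t_u)<0$), the usual argument applies: for small $\eps>0$ we have $h(t_u-\eps)>0>h(t_u+\eps)$, so $h_n$ has the same signs at these two points for large $n$. Uniqueness of the zero of $h_n$ then forces $t_{u_n}\in(t_u-\eps,t_u+\eps)$, which proves continuity of $u\mapsto t_u$.
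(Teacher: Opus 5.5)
Your proposal is correct. The existence, uniqueness and sign-pattern part is the paper's own argument: your factorization $h'(t)=t\,k(t)$ with $k$ decreasing is exactly the paper's analysis of $d\cH(t,u)/dt$. It shows that $h$ first increases and then decreases, starting from $h(0^+)=N\Psi_0(u)$ and tending to $-\infty$.

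There are two harmless slips. First, substituting the scaling gives $\cP_1(u_t)=t^N h(t)$, not $h(t)$; since $t^N>0$, the zeros and signs are unchanged. Second, $\Psi_0(u)>0$ always holds, because $u\in C_0$ and $u\neq 0$ force $\nabla u\not\equiv 0$, so your aside about $A=0$ is unnecessary.

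You differ from the paper in how you prove continuity of $u\mapsto t_u$. The paper simply invokes the implicit function theorem at the transversal zero $d\cH/dt|_{t=t_u}<0$. You instead first prove that $\Psi_0$ restricted to $D(\Psi_0)$ is continuous in the $X_1$-norm, using $1-\sqrt{1-s^2}\le s^2$ and generalized dominated convergence. You then run the one-dimensional sign-change argument directly.

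That continuity step is exactly what the paper's appeal to the implicit function theorem tacitly needs. On $X_1$, $\Psi_0$ is only lower semicontinuous and not differentiable, so $\cH$ is not $C^1$ in $u$ and the classical theorem does not literally apply. Convexity alone gives no continuity either, because $D(\Psi_0)$ has empty interior in $X_1$. Even a continuous-dependence version of the theorem requires continuity of $u\mapsto\cH(t,u)$ on $D(\Psi_0)\setminus\{0\}$.

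Your route is therefore slightly longer but self-contained, and it makes explicit a point the paper glosses over. It uses only the sign pattern of $h$ and the pointwise convergence $h_n(t_u\pm\eps)\to h(t_u\pm\eps)$. The transversality $h'(t_u)<0$ that you mention is therefore not actually needed.
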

	\begin{proof}
		For any $u\in D(\Psi_{0})\backslash\{0\}$, set 
		\begin{equation*}
			\cH(t,u):= N\Psi_0(u)+\frac{N+2}{2}t^{2}\lVert u\rVert_{2}^{2}-\frac{N+\a+2p}{2p}t^{\a+2p}\cF_{\a,p}(u).
		\end{equation*}
		Then 
		\begin{equation*}
			\frac{d\cH(t,u)}{dt}=(N+2)t\lVert u\rVert_{2}^{2}-\frac{(N+\a+2p)(\a+2p)}{2p}t^{\a+2p-1}\cF_{\a,p}(u).
		\end{equation*}
		Since $\a+2p-1>1$, we get 
		\begin{equation*}
			\lim\limits_{t\to 0}\cH(t,u)=N\Psi_{0}(u)\qquad\text{and}\qquad\lim\limits_{t\to +\infty}\cH(t,u)=-\infty.
		\end{equation*}
		Moreover, there exists a unique $T>0$ such that $d\cH(t,u)/dt>0$ as $t\in (0,T)$; and $d\cH(t,u)/dt<0$ as $t\in (T,+\infty)$.  Thus, it is obvious that there exists a unique $t_{u}>T$ such that $\cH(t_{u},u)=0$, so that $\cP_1(u_{t_{u}})=0$ and $u_{t_{u}}\in \cM_1$. Noticing that $d\cH(t,u)/dt|_{t=t_{u}}< 0$, then we exploit implicit function theorem on $\cH(t,u)$ to deduce the continuity.
	\end{proof}
	
	According to above Lemma, we know that the set $\cM_1$ is not empty and $c_1$ is well-defined. Next we show that the constrained minimizer on $\cM_1$ implies a critical point of $\cI$.
	\begin{lemma}\label{minimizer=critical point}
		Let $u_1\in \cM_1$ satisfy $\cI(u_1)=c_1$. Then $u_1$ is a critical point of $\cI$.
	\end{lemma}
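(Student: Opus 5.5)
We have to show that a minimizer $u_1$ of $\cI$ on $\cM_1$ satisfies \eqref{critical point}, that is,
\[
\Psi_0(v)-\Psi_0(u_1)-\Phi'(u_1)(v-u_1)\ge 0 \qquad\text{for all } v\in X_1 .
\]
Since $\cM_1$ is not a smooth manifold (the set $D(\Psi_0)$ has no interior), Lagrange multipliers are not available. The plan is a direct variational comparison along convex segments combined with the fibering of Lemma \ref{uniqueness of parametre on Pohozaev}. This is the mechanism behind the abstract Lemma \ref{abstract critical point} in the Appendix, so one could also just check its hypotheses. Here we outline the direct argument.

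Fix $v\in D(\Psi_0)$; if $v\notin D(\Psi_0)$ the inequality is trivial. For $s\in(0,1]$ set $w_s:=u_1+s(v-u_1)\in D(\Psi_0)$, which uses the convexity of $D(\Psi_0)$. For $s$ small, $w_s\neq0$, and by Lemma \ref{uniqueness of parametre on Pohozaev} the number $t_s:=t_{w_s}$ is well defined, with $t_s\to t_{u_1}=1$ as $s\to0$ by continuity. Minimality of $u_1$ and the fact that $t=1$ maximizes $t\mapsto\cI((u_1)_t)$ give
\[
\cI\bigl((w_s)_{t_s}\bigr)\ge c_1=\cI(u_1)\ge \cI\bigl((u_1)_{t_s}\bigr).
\]
Using the scaling formula \eqref{fiber function}, this reads
\[
t_s^N\bigl(\Psi_0(w_s)-\Psi_0(u_1)\bigr)\ge \frac{t_s^{N+\a+2p}}{2p}\bigl(\cF_{\a,p}(w_s)-\cF_{\a,p}(u_1)\bigr)-\frac{t_s^{N+2}}{2}\bigl(\|w_s\|_2^2-\|u_1\|_2^2\bigr).
\]
By convexity of $\Psi_0$ we have $\Psi_0(w_s)-\Psi_0(u_1)\le s\bigl(\Psi_0(v)-\Psi_0(u_1)\bigr)$. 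On the other side, Proposition \ref{differentiable of nonlocal} gives $\cF_{\a,p}(w_s)-\cF_{\a,p}(u_1)=s\,\cF_{\a,p}'(u_1)(v-u_1)+o(s)$, and similarly for the $L^2$ term. Dividing by $s\,t_s^N$ and letting $s\to0^+$, with $t_s\to1$, yields
\[
\Psi_0(v)-\Psi_0(u_1)\ge \frac{1}{2p}\cF'_{\a,p}(u_1)(v-u_1)-\int_{\RN} u_1(v-u_1)\,dx=\Phi'(u_1)(v-u_1),
\]
which is exactly \eqref{critical point}.

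The main obstacle is justifying that only the continuity $t_s\to1$ is needed, rather than differentiability of $s\mapsto t_s$; differentiability is unavailable because $\Psi_0(w_s)$ need not be differentiable in $s$. The displayed inequality avoids this by comparing both sides at the same dilation $t_s$, so every coefficient is a power of $t_s$ tending to $1$. The one point to check carefully is that the remainder terms are $o(s)$ uniformly once they are multiplied by the bounded factors $t_s^{\a+2p}$ and $t_s^{2}$. This holds since these factors are bounded as $t_s\to1$ and $\cF_{\a,p}$ is $\cC^1$ on $X_1$. One also needs the continuity of $u\mapsto t_u$ in the $X_1$ topology along the segment $w_s\to u_1$; this follows from Lemma \ref{uniqueness of parametre on Pohozaev}, since $\Psi_0(w_s)\to\Psi_0(u_1)$ by convexity bounds together with lower semicontinuity.
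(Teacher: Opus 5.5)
Your argument is correct, and it takes a genuinely different and shorter route than the paper.

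The paper does not argue directly. It verifies the hypotheses of the abstract Lemma~\ref{abstract critical point}: (S1)--(S3) with $\alpha(t)=t^N$, (A1)--(A2) via Lemma~\ref{uniqueness of parametre on Pohozaev}, and the nondegeneracy condition (A3). It computes (A3) explicitly as $(\a+2p)\Psi_0(u_1)+\frac{(N+2)(\a+2p-2)}{2N}\|u_1\|_2^2>0$, using $p>\frac{N+\a}{N}$. Inside the abstract proof, the comparison is made along the fiber of the perturbed point, $\mathfrak I(v_n)-\mathfrak I(S_{t_n}v_n)=(1-t_n)\gamma_{v_n}'(\xi_n)$. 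One must therefore control the quotient $(1-t_n)/\tau_n$, and (A3) plus a case analysis are needed precisely to exclude $(1-t_n)/\tau_n\to\pm\infty$.

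Your idea is to apply the \emph{same} dilation $t_s$ to both $w_s$ and $u_1$, and to use that $t=1$ is the global maximum of the fiber through $u_1$. Then $t_s$ enters only through the coefficients $t_s^{N}$, $t_s^{N+2}$, $t_s^{N+\a+2p}$, so the mere convergence $t_s\to1$ suffices. You need no rate for $1-t_s$, no differentiability of $s\mapsto t_s$, and no condition like (A3). Your worry about uniformity is also unfounded: the $o(s)$ remainder is taken at the fixed point $u_1$ in the fixed direction $v-u_1$.

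What the paper's route buys is an abstract statement allowing general, possibly nonlinear, scalings $S_t$. Yours exploits the explicit power-law behaviour \eqref{fiber function} of each term of $\cI$. Your equal-dilation comparison does extend to the abstract setting whenever $S_t$ is linear and strongly continuous in $t$ near $1$: write $\mathfrak B(S_{t_s}w_s)-\mathfrak B(S_{t_s}u_0)=s\,\mathfrak B'(\zeta_s)S_{t_s}(v-u_0)$ with $\zeta_s\to u_0$. This covers both the dilation used here and the scaling $S_tu=tu$ of the Nehari--Pankov application, so in those cases (A3) is actually superfluous.
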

	\begin{proof}
		We apply Lemma \ref{abstract critical point}. Let $X=X_1$,
		\[
		\mathfrak B(u):=\frac{1}{2p}\cF_{\a,p}(u)-\frac12\lVert u\rVert_2^2,
		\qquad
		\mathfrak A(u):=\Psi_0(u),
		\]
		so that $\cI=\mathfrak A-\mathfrak B$. By Proposition
		\ref{differentiable of nonlocal}, $\mathfrak B\in \cC^1(X_1,\R)$, while
		$\mathfrak A$ is proper, convex and lower semicontinuous on $X_1$, and
		$D(\mathfrak A)=D(\Psi_0)$ is convex. Let $S_tu=u_t$ be the scaling
		in \eqref{scaling}. Then $S_1u=u$, $S_tu\nequiv0$ for $u\nequiv0$, and
		$S_t(S_ru)=S_{tr}u$ for all $t,r>0$. Moreover,
		\[
		\mathfrak A(S_tu)=\Psi_0(u_t)=t^N\Psi_0(u)=t^N\mathfrak A(u),
		\]
		so that {\rm (S1)--(S3)} in Lemma \ref{abstract critical point} hold
		with $\alpha(t)=t^N$. The $\cC^2$-regularity of the fiber maps follows from
		\eqref{fiber function}.
		
		We next identify the abstract functional $\mathfrak R$. Since
		\[
		\mathfrak R(t,u):=\frac{d}{dt}\mathfrak B(u_t)
		=\frac{N+\a+2p}{2p}t^{N+\a+2p-1}\cF_{\a,p}(u)
		-\frac{N+2}{2}t^{N+1}\lVert u\rVert_2^2,
		\]
		we have $\mathfrak R\in\cC^1((0,+\infty)\times X_1,\R)$. Moreover,
		\[
		\mathfrak P(u)=\left.\frac{d}{dt}\cI(u_t)\right|_{t=1}= N\mathfrak A(u)-\mathfrak R(1,u)=\cP_1(u),
		\qquad
		\mathfrak M=\cM_1.
		\]
		Thus {\rm (A1)} and {\rm (A2)} follow directly from Lemma
		\ref{uniqueness of parametre on Pohozaev}. It remains to check {\rm (A3)}
		at the minimizer $u_1$. Since $u_1\in\cM_1$, we have
		\[
		\frac{N+\a+2p}{2p}\cF_{\a,p}(u_1)
		=N\Psi_0(u_1)+\frac{N+2}{2}\lVert u_1\rVert_2^2.
		\]
		Consequently,
		\[
		\begin{aligned}
			\left.\frac{d}{dt}\left(\frac{\mathfrak R(t,u_1)}{\alpha'(t)}\right)\right|_{t=1}
			&=\frac{(N+\a+2p)(\a+2p)}{2Np}\cF_{\a,p}(u_1)
			-\frac{N+2}{N}\lVert u_1\rVert_2^2\\
			&=(\a+2p)\Psi_0(u_1)
			+\frac{(N+2)(\a+2p-2)}{2N}\lVert u_1\rVert_2^2>0,
		\end{aligned}
		\]
		because $u_1\nequiv0$ and $p>\frac{N+\a}{N}$ implies $\a+2p-2>0$. Hence
		{\rm (A3)} is satisfied.
		
		All assumptions of Lemma \ref{abstract critical point} are therefore
		satisfied, and hence $u_1$ is a critical point of $\cI$.
	\end{proof}
	
	Next, we are going to show the ground state level $c_1$ is positive by which identify it with mountain pass level.
	
	\begin{lemma}\label{positive energy level}
		Let $m_1$ be the mountain pass level defined by
		\begin{equation}\label{mountain pass level}
			m_1:=\inf_{\gamma\in \Gamma} \max_{\tau \in [0,1]} \cI(\gamma(\tau)),\qquad  \Gamma := \{\gamma\in \C\left([0,1]; X_1\right): \gamma(0)=0,\,  \cI(\gamma(1))<0\}.
		\end{equation}
		Then it holds that $c_1= m_1>0$.
	\end{lemma}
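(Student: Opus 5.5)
We will prove the three relations $m_1>0$, $c_1\ge m_1$ and $c_1\le m_1$ separately.

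\emph{Step 1: $m_1>0$.} First we show that $\cI$ has a mountain pass geometry near the origin in $X_1$. For $u\in D(\Psi_0)$ we have the elementary inequality $1-\sqrt{1-s}\ge s/2$ for $s\in[0,1]$, which gives
\[
\Psi_1(u)\ge \tfrac12\bigl(\lVert\n u\rVert_2^2+\lVert u\rVert_2^2\bigr)=\tfrac12\lVert u\rVert_{H^1}^2 .
\]
By the Hardy--Littlewood--Sobolev inequality, $\cF_{\a,p}(u)\lesssim \lVert u\rVert_{2Np/(N+\a)}^{2p}$. Since $p>\frac{N+\a}{N}$, the exponent $\frac{2Np}{N+\a}$ exceeds $2$. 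If $\frac{2Np}{N+\a}\le 2^*$, Sobolev gives $\cF_{\a,p}(u)\lesssim\lVert u\rVert_{H^1}^{2p}$. Otherwise, we interpolate between $L^2$ and $L^\infty$, using $\lVert u\rVert_\infty\lesssim\lVert u\rVert_{X_1}$ from Proposition \ref{embedding}.

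The delicate point is that $\lVert u\rVert_{X_1}$ also contains $\lVert\n u\rVert_r$. However, on $D(\Psi_0)$ we have $|\n u|\le1$, so $\lVert\n u\rVert_r^r\le\lVert\n u\rVert_2^2$. Hence on $D(\Psi_0)$ every norm involved is controlled by a power of $\lVert u\rVert_{H^1}$, and we obtain $\cF_{\a,p}(u)\lesssim \lVert u\rVert_{H^1}^{\beta}$ for some $\beta>2$ (using $2p>2$). It follows that there exist $\rho,\eta>0$ such that
\[
\cI(u)\ge\eta \quad\text{whenever } u\in D(\Psi_0),\ \lVert u\rVert_{H^1}=\rho ,
\]
and $\cI(u)>0$ for $0<\lVert u\rVert_{H^1}\le\rho$. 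Outside $D(\Psi_0)$ we have $\cI=+\infty$.

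Now take $\gamma\in\Gamma$. Then $\cI(\gamma(1))<0$ forces $\gamma(1)\in D(\Psi_0)$ with $\lVert\gamma(1)\rVert_{H^1}>\rho$. The map $\tau\mapsto\lVert\gamma(\tau)\rVert_{H^1}$ is continuous, because $X_1\hookrightarrow H^1$. So some $\tau$ satisfies $\lVert\gamma(\tau)\rVert_{H^1}=\rho$. At that $\tau$, either $\cI(\gamma(\tau))=+\infty$ or $\cI(\gamma(\tau))\ge\eta$. Therefore $m_1\ge\eta>0$.

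\emph{Step 2: $c_1\ge m_1$.} Let $u\in\cM_1$, so $t_u=1$ by Lemma \ref{uniqueness of parametre on Pohozaev}. By \eqref{fiber function}, $\cI(u_t)\to-\infty$ as $t\to\infty$, so we may choose $T$ large with $\cI(u_T)<0$. We claim that $t\mapsto u_t$ is continuous from $[0,T]$ into $X_1$, with $u_0:=0$. For $t\to0$ this holds because
\[
\lVert u_t\rVert_2^2=t^{N+2}\lVert u\rVert_2^2,\qquad \lVert\n u_t\rVert_q^q=t^N\lVert\n u\rVert_q^q ;
\]
continuity for $t>0$ follows by density of $\cC_0^\infty$. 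Then $\gamma(\tau):=u_{\tau T}$ belongs to $\Gamma$. Since $\max_{t>0}\cI(u_t)=\cI(u_{t_u})=\cI(u)$, we get $m_1\le\cI(u)$, and taking the infimum over $\cM_1$ gives $m_1\le c_1$.

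\emph{Step 3: $c_1\le m_1$.} We show that every $\gamma\in\Gamma$ crosses $\cM_1$. Since $\cI(\gamma(1))<0$, the point $\gamma(1)$ lies in $D(\Psi_0)\setminus\{0\}$. We use the identity
\[
\cP_1(u)-(N+\a+2p)\,\cI(u)=-\a\,\Psi_0(u)-\tfrac{\a+2p-2}{2}\lVert u\rVert_2^2\cdot\tfrac{?}{}\ldots
\]
More precisely, a direct linear combination shows that $\cP_1(u)\le (N+\a+2p)\cI(u)$ up to nonpositive terms, since the coefficients on $\Psi_0$ and $\lVert u\rVert_2^2$ come out as $N-(N+\a+2p)<0$ and $\frac{N+2}{2}-\frac{N+\a+2p}{2}<0$. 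This is where $p>\frac{N+\a}{N}$ enters, through $\a+2p>2$. Consequently $\cP_1(\gamma(1))<0$.

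Near $0$, Step 1's estimate applied to $\cP_1$ instead of $\cI$ gives $\cP_1(u)>0$ for $u\in D(\Psi_0)$ with $0<\lVert u\rVert_{H^1}$ small. Let $\tau_0:=\sup\{\tau:\gamma(\tau)=0\}$, and work on the interval $(\tau_0,1]$. On this interval $\gamma(\tau)$ lies in $D(\Psi_0)\setminus\{0\}$ wherever $\cI(\gamma(\tau))<\infty$; if the maximum of $\cI$ along $\gamma$ is infinite, there is nothing to prove. The function $\tau\mapsto\cP_1(\gamma(\tau))$ is continuous there: $\Psi_0$ is continuous on $D(\Psi_0)$ along $X_1$-convergent sequences because the integrand is dominated by $|\n u|^2$ when $|\n u|\le1$, and $\cF_{\a,p}$ is continuous by Proposition \ref{differentiable of nonlocal}. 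By the intermediate value theorem there is $\tau^*$ with $\gamma(\tau^*)\in\cM_1$. Hence
\[
c_1\le\cI(\gamma(\tau^*))\le\max_{\tau\in[0,1]}\cI(\gamma(\tau)),
\]
and taking the infimum over $\gamma\in\Gamma$ gives $c_1\le m_1$.

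The main obstacle is this continuity and sign bookkeeping: it requires the small-norm estimate for $\cP_1$ and the domination argument for $\Psi_0$ on $D(\Psi_0)$.
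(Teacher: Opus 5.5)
Your proposal is correct and follows the paper's three-step scheme. A small-norm lower bound gives $m_1>0$, the fiber path $\tau\mapsto u_{\tau T}$ gives $m_1\le c_1$, and an intermediate value argument for $\cP_1$ along each $\gamma\in\Gamma$ gives $c_1\le m_1$. Your version is in fact more careful than the paper's: measuring smallness in $\lVert u\rVert_{H^1}$ and interpolating with $L^2$ avoids the paper's bound $\frac14(\lVert\n u\rVert_2^2+\lVert\n u\rVert_r^r)+\frac12\lVert u\rVert_2^2\geq\frac14\lVert u\rVert_{X_1}^2$, which can fail when $\lVert\n u\rVert_r<1$, and your $\tau_0$ device together with continuity of $\Psi_0$ on $D(\Psi_0)$ justifies the intermediate value step that the paper leaves implicit; just replace the garbled display in Step 3 by $\cP_1(u)=(N+\a+2p)\cI(u)-(\a+2p)\Psi_0(u)-\frac{\a+2p-2}{2}\lVert u\rVert_2^2$.
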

	\begin{proof}
		From \eqref{fiber function} there holds that $\cI(u_t)\to -\infty$ as $t\to +\infty$ and $\cI(u_t)\to 0$ as $t\to 0$, for any nonzero $u\in D(\Psi_{0})$. Thus, the class $\Gamma$ is nonempty and $m_1$ is well-defined.
		
		We first prove $m_1>0$. By Hardy-Littlewood-Sobolev inequality and Proposition \ref{embedding} \ref{embedding of Lp}, one has 
		\begin{equation*}
			\cF_{\a,p}(u)\leq  C \left(\irn |u|^{\frac{2Np}{N+\a}}\right)^{\frac{N+\a}{N}}\leq C\lVert u\rVert_{X_1}^{2p}\qquad \text{for } u\in X_1.
		\end{equation*}
		If $u\notin D(\Psi_{0})$, it is natural to make the convention that $+\infty=\cI(u)>0$. Hence we only suppose that $u\in D(\Psi_{0})$, that is, $|\n u|\leq 1$ a.e. on $\RN$, which implies that $\lVert \n u\rVert_{r}^r\leq \lVert \n u\rVert_{2}^2$ for $r> \max\{N, 2^*\}$. Note that 
		\begin{equation}\label{equivalent of Psi0}
			\frac{1}{2}\lVert \n u \rVert_{2}^2\leq \Psi_0(u)\leq \lVert \n u \rVert_{2}^2\qquad \text{for all }u\in D(\Psi_{0})\backslash\{0\}.
		\end{equation} 
		Therefore, we have
		\begin{equation}\label{lower estimate of energy}
			\begin{aligned}
				\cI(u) &= \Psi_0(u)+\frac{1}{2}\lVert u\rVert_{2}^2- \frac{1}{2p}\cF_{\a,p}(u)\\
				&\geq \frac{1}{4}\left(\lVert \n u\rVert_{2}^2 + \lVert \n u\rVert_{r}^r\right) +\frac{1}{2}\lVert u\rVert_{2}^2 - C\lVert u\rVert_{X_1}^{2p}\\
				& \geq \frac{1}{4}\lVert u\rVert_{X_1}^{2} - C\lVert u\rVert_{X_1}^{2p}.
			\end{aligned}
		\end{equation} 
		Since $2p>2$, it thereby holds for sufficiently small $\beta>0$ that $\inf_{\lVert u\rVert_{X_1}=\beta}\cI(u)>0$. Therefore, $m_1>0$.
		
		Subsequently, we aim to show $c_1\geq m_1$.  For any $u\in \cM_1$, fix a $T_u>1$ such that $\cI(u_{T_u})<0$. Then we define the path $\gamma:[0,1]\mapsto X_1$ by $\gamma(\tau):= u_{\tau/T_u}=(\tau/T_u)u(T_u\cdot/\tau)$. Clearly, $\gamma\in \Gamma$ and $m_1\leq \max_{\tau \in [0,1]} \cI(\gamma(\tau)) = \cI(u)$ by Lemma \ref{uniqueness of parametre on Pohozaev}. According to arbitrary $u\in \cM_1$, we therefore obtain that 
		\begin{equation*}
			m_1\leq \inf_{u\in \cM_1}\cI(u)= c_1.
		\end{equation*}
		
		Finally, we focus on the proof for $c_1\leq m_1$, and it is sufficient to verify that $\gamma([0,1])\cup \cM_1\neq \emptyset$ for all $\gamma\in\Gamma$. First deduce analogously to \eqref{lower estimate of energy} that
		\begin{equation*}
			\begin{aligned}
				\cP_1(u)&= N\Psi_{0}(u) +\frac{N+2}{2}\lVert u\rVert_{2}^2 - \frac{N+\alpha+2p}{2p}\cF_{\a,p}(u)\\
				&\geq \frac{N}{4} \lVert u\rVert_{X_1}^{2} - C\lVert u\rVert_{X_1}^{2p}.
			\end{aligned}
		\end{equation*}
		Hence we get $\cP_1(\gamma(t))>0$ for sufficiently small $t$. On the other hand, one has
		\begin{equation*}
			\begin{aligned}
				\cP_1(\gamma(1))&= N\cI(\gamma(1)) + 2\left(\frac{\lVert \gamma(1)\rVert_{2}^2}{2}-\frac{\a+2p}{4p}\cF_{\a,p}(\gamma(1))\right)\\
				& \leq (N+2)\cI(\gamma(1)) -2\Psi_{0}(\gamma(1))<0.
			\end{aligned}
		\end{equation*}
		Therefore, by intermediate value theorem there exists some $t_0\in (0,1)$ such that $\cP_1(\gamma(t_0))=0$, that is, $\gamma(t_0)\in \cM_1$. Thus, we complete the proof.
	\end{proof}
	
	Now we are in the position to prove our first main result.
	\begin{proof}[Proof of Theorem \ref{Thm 1}]
		Given a minimizing sequence $(u_n)_{n\in\N}\subset \cM_1$ for $c_1$, from Lemma \ref{positive energy level} we have 
		\begin{equation*}
			\cI(u_n) = \frac{\a+2p}{N+\a+2p}\Psi_{0}(u_n)+\frac{\a+2p-2}{2(N+\a+2p)}\lVert u_n\rVert_{2}^2\to c_1>0
		\end{equation*}
		as $n\to\infty$, which implies that $\{\Psi_{0}(u_n)\}_{n\in\N}$ and $\{\lVert u_n\rVert_{2}\}_{n\in \N}$ are both bounded. Since all $(u_n)_{n\in\N}$ are in $D(\Psi_{0})$, then $\lVert \n u_n\rVert_{r}^r\leq \lVert \n u_n \rVert_{2}^2$ for all $n\in\N$. Therefore, we get from \eqref{equivalent of Psi0} that sequence $(u_n)_{n\in\N}$ is bounded in $X_1$. By Lemma \ref{profile decomposition}, there exists $k\in \N\cup \{+\infty\}$, $(y_{n}^{i})_{n\in\N}$ and $\tilde{u}_{i}\in X_1$ such that 
		\begin{equation}\label{splliting formulae}
			\begin{aligned}
				u_{n}(\cdot+y_{n}^{i})&\weakto \tilde{u}_{i}\qquad\text{{\rm weakly in }}X_1,\, \text{{\rm as }}n\to\infty; \\
				u_{n}(x+y_{n}^{i})&\to \tilde{u}_{i}(x)\qquad\text{{\rm a.e. on }}\RN,\, \text{{\rm as }}n\to\infty;\\ 
				\lim_{n\to \infty}|y_{n}^{i}-y_{n}^{j}|&= +\infty\qquad\text{{\rm for }}i\neq j;\\
				\liminf_{n\to\infty}\Psi(u_n)&\geq \sum_{i=0}^{k}\Psi(\tilde{u}_{i})\quad\left(\liminf_{n\to\infty}\Psi_0(u_n)\geq \sum_{i=0}^{k}\Psi_{0}(\tilde{u}_{i})\right);\\
				\lim_{n\to \infty}\cF_{\a,p}(u_n) &= \sum_{i=0}^{k}\cF_{\a,p}(\tilde{u}_{i}),
			\end{aligned}
		\end{equation}
		If $k=0$ and $\tilde{u}_0=0$, then 
		\begin{equation*}
			\cI(u_n) = \frac{\a+2p}{N+\a+2p}\Psi_{0}(u_n)+\frac{\a+2p-2}{2(N+\a+2p)}\lVert u_n\rVert_{2}^2\to 0
		\end{equation*}
		and $c_1=0$, which gives a contradiction. Hence, either $k>0$ or $k=0$ with $\tilde{u}_0\neq0$. Noticing that $D(\Psi_{0})$ is convex and closed, then it is weakly closed and all $\tilde{u}_{i}$ are located in $D(\Psi_{0})$.
		
		Next, we set $J:=\{i: \tilde{u}_i\nequiv 0\}$. The above assumption implies that $J\neq \emptyset$. By \eqref{splliting formulae} and Br\'{e}zis-Lieb Lemma, the Pohožaev identity $\cP_1(u_n)=0$ yields  
		\begin{equation*}
			\begin{aligned}
				\sum_{i\in J}N\Psi_{0}(\tilde{u}_i)\leq \liminf_{n\to \infty} N\Psi_{0}(u_n)&=\liminf_{n\to \infty}\left[ \frac{N+\a+2p}{2p}\cF_{\a,p}(u_n)-\frac{N+2}{2}\lVert u_n\rVert_{2}^2\right]\\
				&=\frac{N+\a+2p}{2p}\sum_{i\in J}\cF_{\a,p}(\tilde{u}_i)-\frac{N+2}{2}\sum_{i\in J}\lVert \tilde{u}_i\rVert_{2}^2.
			\end{aligned}
		\end{equation*}
		Hence, There exists $j\in J$ such that $\cP_1(\tilde{u}_j)\leq 0$. From Lemma \ref{uniqueness of parametre on Pohozaev} the parameter $t_j$ such that $t_{j}\tilde{u}_{j}(\frac{\cdot}{t_{j}}):= \tilde{w}_j\in \cM_1$ satisfies $t_j\leq 1$. On the other hand, it follows by \eqref{splliting formulae} and  Fatou's Lemma that
		\begin{equation}\label{energy obligatory}
			\begin{aligned}
				c_1\leq \cI(\tilde{w}_j)&=\frac{\a+2p}{N+\a+2p}\Psi_{0}(\tilde{w}_j)+\frac{\a+2p-2}{2(N+\a+2p)}\lVert \tilde{w}_j\rVert_{2}^2\\
				&=\frac{\a+2p}{N+\a+2p}t_{j}^{N}\Psi_{0}(\tilde{u}_j)+\frac{\a+2p-2}{2(N+\a+2p)}t_{j}^{N+2}\lVert \tilde{u}_j\rVert_{2}^2\\
				&\leq \frac{\a+2p}{N+\a+2p}\Psi_{0}(\tilde{u}_j)+\frac{\a+2p-2}{2(N+\a+2p)}\lVert \tilde{u}_j\rVert_{2}^2\\
				&\leq \liminf_{n\to \infty}\left[\frac{\a+2p}{N+\a+2p}\Psi_{0}(u_{n}(\cdot+y_{n}^{j}))+\frac{\a+2p-2}{2(N+\a+2p)}\lVert u_n(\cdot+y_{n}^{j})\rVert_{2}^2\right]\\
				&=\liminf_{n\to \infty}\cI(u_n)= c_1.
			\end{aligned}
		\end{equation}
		Therefore, we conclude that $t_j=1$ and $\tilde{u}_j\in \cM_1$ with $\cI(\tilde{u}_j)=c_1$, which indicates that $\tilde{u}_j$ is a minimizer for $c_1$. Moreover, from the last two formulae in \eqref{splliting formulae} and \eqref{energy obligatory} it is clear that no other nontrivial profile can occur. For convenience, we relabel $\tilde{u}_j$ by $u_1$ and set $y_n:=y_n^j$.
		
		We next prove the strong convergence. From \eqref{energy obligatory} and the convergence $\cI(u_n)\to c_1$, we have
		\[
		\Psi_{0}(u_n(\cdot+y_n))\to \Psi_{0}(u_1)
		\qquad\text{and}\qquad
		\lVert u_n(\cdot+y_n)\rVert_{2}\to \lVert u_1\rVert_{2}.
		\]
		Since $u_n(\cdot+y_n)\weakto u_1$ in $X_1$, the convergence in $X_1$
		follows from \cite[Lemma 2.2]{BIM}. Thus every minimizing sequence
		admits, up to translations, a strongly convergent subsequence whose
		limit belongs to $\cM_1$ and achieves $c_1$.
		
		It remains to show that this minimizer is a classical ground state solution. By Lemma \ref{minimizer=critical point}, $u_1$ is a critical point of $\cI$. Equivalently,
		\[
		\Psi_0(v)-\Psi_0(u_1)+\langle u_1,v-u_1\rangle_{L^2(\RN)}
		-\int_{\RN}(I_{\a}*|u_1|^p)|u_1|^{p-2}u_1(v-u_1)\,dx\geq0
		\]
		for all $v\in X_1$. Hence $u_1$ is a minimizer of
		\[
		I_{\rho}(\psi):= \irn (1-\sqrt{1-|\n \psi|^2})\, dx -\langle \rho, \psi\rangle_{L^2(\RN)}
		\]
		with
		\[
		\rho=(I_{\a}*|u_1|^{p})|u_1|^{p-2}u_1-u_1.
		\]
		By Proposition \ref{embedding} \ref{embedding of Lp} and Proposition
		\ref{differentiable of nonlocal}, we have $u_1,I_{\a}*|u_1|^p\in
		L^\infty(\RN)$. Therefore $\rho\in L^\infty(\RN)$, and
		\cite[Proposition 2.10]{BIMM} gives that $u_1$ is a weak solution of
		\[
		-{\rm div}\left(\frac{\nabla u_1}{\sqrt{1-|\nabla u_1|^2}}\right)+u_1
		=(I_\alpha*|u_1|^p)|u_1|^{p-2}u_1\qquad\text{in }\RN.
		\]
		
		Finally, we aim at the regularity of the solution. Arguing as in the proof of Lemma \ref{Pohozaev lemma}, one has $u_1\in W^{2,q}_{\mathrm{loc}}(\RN)$ for each $q\in[2,\infty)$ and $|\nabla u_1|<1$ on $\RN$. Hence $u_1\in \cC_{\mathrm{loc}}^{1,\lambda}$ for some $0<\lambda\leq1$ and the coefficients
		\[
		a_{ij}(x)=\frac{\delta_{ij}}{\sqrt{1-|\n u_1|^2}}+\frac{\partial_{i}u_1 \partial_{j}u_1}{\left(1-|\n u_1|^2\right)^{3/2}}
		\]
		belong to $\cC^{0,\lambda}_{\mathrm{loc}}(\RN)$. On the other hand, since $|u_1|^{p}\in L^{s}(\RN)$ for all $s\in[\frac{2N}{N+\a},\infty]$, \cite[Theorem 2]{DuP} yields $I_{\a}*|u_1|^{p}\in \cC_{\mathrm{loc}}^{0,\lambda}(\RN)$. Thus $(I_{\a}*|u_1|^{p})|u_1|^{p-2}u_1\in \cC_{\mathrm{loc}}^{0,\lambda}(\RN)$, and Schauder's regularity theory applied to
		\[
		\sum_{i,j=1}^{N}a_{ij}(x)\partial_{ij}u_1
		=(I_{\a}*|u_1|^{p})|u_1|^{p-2}u_1-u_1
		\]
		implies that $u_1\in \cC^2(\RN)$. Since $u_1\in\cM_1$, it is nontrivial. Moreover, if $w$ is any nontrivial weak solution of \eqref{eq} with $\omega=1$, then Lemma \ref{Pohozaev lemma} gives $w\in\cM_1$, and hence $\cI(w)\geq c_1=\cI(u_1)$. Therefore $u_1$ has the least energy among all nontrivial solutions, and so it is a classical ground state solution of \eqref{eq}.
	\end{proof}
	
	\section{Qualitative properties of ground state solutions}\label{sec Qualitative properties}
	In this section we show that all the ground state solutions to \eqref{eq} must have constant sign and are radially symmetric. 
	\begin{proposition}\label{constant sign}
		Let $p>\frac{N+\a}{N}$. If $u\in \cM_1$ is a ground state solution to \eqref{eq}, then either $u>0$ or $u<0$ on $\RN$. 
	\end{proposition}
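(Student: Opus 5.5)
The plan is to show that $|u|$ is again a minimizer for $c_1$, and then to use the strong maximum principle on the equation satisfied by $|u|$.

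\emph{Step 1: $|u|$ is a minimizer.} Let $u\in\cM_1$ be a ground state, so $\cI(u)=c_1$. Since $u\in X_1\hookrightarrow \cC_0(\RN)$, we have $|u|\in X_1$ and $|\n|u||=|\n u|$ a.e. Hence
\[
\Psi_0(|u|)=\Psi_0(u),\qquad \lVert |u|\rVert_2=\lVert u\rVert_2,\qquad \cF_{\a,p}(|u|)=\cF_{\a,p}(u).
\]
Therefore $\cP_1(|u|)=\cP_1(u)=0$ and $\cI(|u|)=\cI(u)=c_1$. Also $|u|\in D(\Psi_0)\setminus\{0\}$, so $|u|\in\cM_1$ is a minimizer for $c_1$.

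\emph{Step 2: $|u|$ is a classical solution.} By Lemma \ref{minimizer=critical point}, $|u|$ is a critical point of $\cI$. The regularity argument at the end of the proof of Theorem \ref{Thm 1} then applies verbatim. It shows that $w:=|u|$ is a $\cC^2$ solution of
\[
-{\rm div}\left(\frac{\n w}{\sqrt{1-|\n w|^2}}\right)+w=(I_\a*w^p)w^{p-1}\ge0 \quad\text{in }\RN,
\]
with $\lVert\n w\rVert_\infty\le 1-\delta$ for some $\delta\in(0,1)$.

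\emph{Step 3: $w>0$.} Write the equation in nondivergence form $\sum a_{ij}(x)\partial_{ij}w-w\le 0$. The coefficients $a_{ij}$ are uniformly elliptic and bounded, because $|\n w|\le 1-\delta$. Since $w\ge0$ and $w\not\equiv0$, the strong maximum principle (Hopf, or Gilbarg--Trudinger Thm.~3.5 applied to $-Lw+w\ge 0$ with $c=-1\le0$) gives $w>0$ everywhere on $\RN$. This means $u$ never vanishes. Since $u$ is continuous on the connected set $\RN$, either $u>0$ on $\RN$ or $u<0$ on $\RN$.

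\emph{Main obstacle.} The delicate point is justifying the maximum principle for the quasilinear operator: one needs $\cC^2$ regularity of $w=|u|$ and uniform ellipticity. Both come from applying \cite[Proposition 2.10]{BIMM} to $|u|$ itself, which is legitimate because $|u|$ is a minimizer and hence a critical point. It is not obtained from $u$, since $|\cdot|$ destroys smoothness at zeros of $u$. Alternatively, if needed, one can use a weak Harnack inequality for the divergence-form linear equation obtained by viewing $\frac{1}{\sqrt{1-|\n w|^2}}$ as a bounded uniformly positive coefficient.
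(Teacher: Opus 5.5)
Your proposal is correct and follows the paper's proof essentially line by line. You show that $|u|\in\cM_1$ attains $c_1$, then use Lemma \ref{minimizer=critical point} together with the regularity part of the proof of Theorem \ref{Thm 1} to see that $|u|$ is a classical nonnegative solution, and finally obtain $|u|>0$ from the strong maximum principle and the continuity of $u$. Your extra remarks on uniform ellipticity (from $\|\nabla w\|_\infty\le 1-\delta$) and on the sign convention $\sum a_{ij}\partial_{ij}w-w\le 0$, with zeroth-order coefficient $-1\le 0$, only make explicit what the paper leaves implicit.
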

	\begin{proof}
		Let $u\in \cM_1$ be a ground state solution to \eqref{eq}. Then $\cI(u)=c_1$. Since
		\begin{equation*}
			\Psi_0(|u|)=\Psi_0(u),\qquad \lVert |u|\rVert_2=\lVert u\rVert_2,\qquad
			\cF_{\a,p}(|u|)=\cF_{\a,p}(u),
		\end{equation*}
		we have $|u|\in \cM_1$ and $\cI(|u|)=c_1$. Hence, by Lemma \ref{minimizer=critical point}, $|u|$ is also a critical point of $\cI$. Arguing as in the proof of Theorem \ref{Thm 1}, $|u|$ is a classical nonnegative solution to \eqref{eq}. By the strong maximum principle, $|u|>0$ on $\RN$. Since $u$ is continuous, it cannot change sign. Therefore either $u>0$ or $u<0$ on $\RN$.
	\end{proof}
	
	In order to examine the symmetry of the ground state solution, we introduce the useful tool of so-called {\it polarization}(see e.g.\cite{Weth}). Let $H\subset \RN$ be a closed half-space. We denote $\sigma_{H}$ the reflection with respect to the hyperplane $\partial H$. Then the polarization $u_{H}:\RN\to\R$ of a function $u:\RN\to \R$ with respect to $H$ is defined by 
	\begin{equation*}
		u_{H}(x):=\begin{cases}
			\max \{u(x), u(\sigma_{H}(x))\}\qquad\text{if }x\in H,\\
			\min \{u(x), u(\sigma_{H}(x))\}\qquad\text{if }x\notin H.
		\end{cases}
	\end{equation*}
	
	Recall the following two crucial lemmas from \cite[Lemma 5.3 and 5.4]{MVJFA}, which reflect a strong symmetric effect for nonlocal term. 
	\begin{lemma}\label{polarization inequality}
		Let $u\in L^{\frac{2N}{N+\a}}(\RN)$ and $H$ be a closed half-space in $\RN$. If $u\geq0$ and 
		\begin{equation*}
			\int_{\mathbb{R}^N} \int_{\mathbb{R}^N} \frac{u(x)u(y)}{|x - y|^{N-\alpha}} \, dx \, dy \geqslant \int_{\mathbb{R}^N} \int_{\mathbb{R}^N} \frac{u_H(x)u_H(y)}{|x - y|^{N-\alpha}} \, dx \, dy,
		\end{equation*}
		then either $u_H=u$ or $u_H=u\circ\sigma_{H}$.
	\end{lemma}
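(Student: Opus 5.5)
The plan follows Moroz--Van Schaftingen: reduce to a two-point rearrangement inequality and then to a strict-inequality characterization for the Riesz kernel. Write $K(z):=|z|^{-(N-\alpha)}$, which is radially symmetric and strictly decreasing in $|z|$. Set $\sigma=\sigma_H$, abbreviate $x':=\sigma(x)$, $y':=\sigma(y)$, and note that $\sigma$ is measure preserving. We split $\RN\times\RN$ into the four pieces $H\times H$, $H\times H^c$, $H^c\times H$, $H^c\times H^c$. Reflecting the variables that lie in $H^c$, both double integrals (for $u$ and for $u_H$) become integrals over $H\times H$ of
\[
K(x-y)\big[v(x)v(y)+v(x')v(y')\big]+K(x-y')\big[v(x)v(y')+v(x')v(y)\big],
\]
with $v=u$ or $v=u_H$. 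Here we use $|x'-y'|=|x-y|$ and $|x'-y|=|x-y'|$. For $x,y\in H$ we also have $|x-y|\le|x-y'|$, with strict inequality when $x,y$ lie in the interior of $H$, since $\partial H$ has measure zero. So $K(x-y)\ge K(x-y')$ there.

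Next comes the pointwise comparison. Fix $x,y\in H$ and put $a=u(x)$, $b=u(x')$, $c=u(y)$, $d=u(y')$, all nonnegative. Polarization replaces $(a,b)$ by $(\max,\min)$ and $(c,d)$ by $(\max,\min)$. The quantity $ac+bd+ad+bc=(a+b)(c+d)$ is invariant under this operation. Meanwhile $ac+bd$ can only increase: it increases strictly exactly when $(a-b)(c-d)<0$, and otherwise it stays fixed. Writing the integrand as
\[
\big(K(x-y)-K(x-y')\big)(ac+bd)+K(x-y')(a+b)(c+d),
\]
we see that the integrand for $u_H$ is pointwise $\ge$ that for $u$. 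This gives the standard polarization inequality. Combined with the hypothesis, the two double integrals are equal, and both are finite by Hardy--Littlewood--Sobolev since $u\in L^{2N/(N+\alpha)}$.

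Equality of the integrals, together with the strict positivity of $K(x-y)-K(x-y')$ for a.e. $(x,y)\in H\times H$, forces $(a-b)(c-d)\ge 0$ for a.e. $(x,y)\in H\times H$. In other words, $(u(x)-u(x'))(u(y)-u(y'))\ge0$ a.e. on $H\times H$. Define $A_+=\{x\in H: u(x)>u(x')\}$ and $A_-=\{x\in H:u(x)<u(x')\}$. The condition says $|A_+\times A_-|=0$, so one of the two sets is null. If $|A_-|=0$, then $u\ge u\circ\sigma$ a.e. on $H$, which gives $u_H=u$. If $|A_+|=0$, then $u\le u\circ\sigma$ a.e. on $H$, which gives $u_H=u\circ\sigma$.

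The main delicate point is justifying that equality of the integrals implies pointwise a.e. equality. We need the difference integrand to be nonnegative and integrable, so that integrating it to zero forces it to vanish a.e. Integrability follows from the HLS finiteness of each piece. Nonnegativity, and the strict positivity of the kernel difference off a null set, come from the geometric observation above.
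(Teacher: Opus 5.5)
Your proof is correct and is essentially the argument of Moroz--Van Schaftingen's Lemma 5.3, which the paper cites without reproducing: fold everything onto $H\times H$ by the reflection, compare pointwise using the two-point rearrangement inequality and the strictly positive kernel difference, and in the equality case deduce $(u(x)-u(\sigma_H x))(u(y)-u(\sigma_H y))\ge 0$ a.e. on $H\times H$. Your explicit step that $|A_+|\,|A_-|=0$ is a clean way to pass from this a.e. condition to the dichotomy $u_H=u$ or $u_H=u\circ\sigma_H$.
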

	
	\begin{lemma}\label{symmetrizaton}
		Let $ s \geqslant 1 $ and $ u \in L^s(\mathbb{R}^N)$. If $ u \geq 0$  and for every closed half-space $ H \subset \mathbb{R}^N$, 
		$u_H = u $ or  $u_H = u \circ \sigma_H$, then there exist $ x_0 \in \mathbb{R}^N$  and $ v : (0, \infty) \to \mathbb{R} $ a nonincreasing function such that for almost every $ x \in \mathbb{R}^N$, $u(x) = v(|x - x_0|)$.
	\end{lemma}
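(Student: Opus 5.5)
The plan is to first extract, for every direction, a hyperplane of reflection symmetry, then show all these hyperplanes pass through one point $x_0$, and finally read off monotonicity from the half-spaces not containing $x_0$ on their boundary. If $u\equiv0$ there is nothing to prove, so assume $u\not\equiv0$. For $e\in S^{N-1}$ and $\lambda\in\R$ put $H_{e,\lambda}:=\{x: x\cdot e\geq\lambda\}$ and $\sigma_{e,\lambda}:=\sigma_{H_{e,\lambda}}$. Define
\[
\Lambda_e^+:=\{\lambda: u_{H_{e,\lambda}}=u\},\qquad \Lambda_e^-:=\{\lambda: u_{H_{e,\lambda}}=u\circ\sigma_{e,\lambda}\}.
\]
By hypothesis $\Lambda_e^+\cup\Lambda_e^-=\R$. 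Both sets are closed, since $\lambda\mapsto u\circ\sigma_{e,\lambda}$ is continuous into $L^s(\RN)$ (continuity of translations in $L^s$) and $\lambda\mapsto u_{H_{e,\lambda}}$ is continuous as well, using $|\max\{a,b\}-\max\{c,d\}|\le|a-c|+|b-d|$.

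\textbf{Step 1: behaviour of $\Lambda_e^\pm$ for large $|\lambda|$.} Note that $\lambda\in\Lambda_e^+$ means $u(x)\ge u(\sigma_{e,\lambda}x)$ a.e. on $H_{e,\lambda}$. If this held for arbitrarily large $\lambda$, then most of the mass of $u$, which lies in $\{x\cdot e<\lambda\}$, would be dominated by $u$ on the thin far region $H_{e,\lambda}$, whose $L^s$ mass tends to $0$. Concretely, fix a ball $B$ with $\int_B u^s>0$; for $\lambda$ large, $\sigma_{e,\lambda}B\subset H_{e,\lambda}$ and $\int_{\sigma_{e,\lambda} B}u^s\to0$ as $\lambda\to\infty$, which contradicts $u\circ\sigma_{e,\lambda}\le u$ there. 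So $\Lambda_e^-\supset[\Lambda,\infty)$, and symmetrically $\Lambda_e^+\supset(-\infty,-\Lambda]$. By connectedness of $\R$ and closedness, there is $\lambda_e\in\Lambda_e^+\cap\Lambda_e^-$. At such a $\lambda_e$ we have $u=u\circ\sigma_{e,\lambda_e}$, so $u$ is symmetric about $\{x\cdot e=\lambda_e\}$.

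\textbf{Step 2: a common center $x_0$.} The symmetry value $\lambda_e$ is unique. If $u$ were symmetric about two parallel hyperplanes, then composing the two reflections would make $u$ invariant under a nontrivial translation, impossible for $0\ne u\in L^s$. Take $x_0$ to be the intersection of the symmetry hyperplanes for $e_1,\dots,e_N$. For a general $e$, I would show that $\lambda_e=x_0\cdot e$. The point is that the reflection $\rho$ through $x_0$ (the composition of the $N$ coordinate reflections) preserves $u$ and maps the symmetry hyperplane for $e$ to a parallel symmetry hyperplane. Uniqueness then forces that hyperplane to pass through $x_0$. Hence $u$ is invariant under every reflection through a hyperplane containing $x_0$, and since these generate $O(N)$ about $x_0$, $u(x)=v(|x-x_0|)$ a.e.

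\textbf{Step 3: monotonicity.} Given $|x-x_0|<|y-x_0|$, let $H$ be the half-space bounded by the bisector of $x$ and $y$ that contains $x$, and hence $x_0$ in its interior. I must show $H$ falls in the case $u_H=u$, which gives $u(x)\ge u(y)$ and so $v$ nonincreasing. Suppose instead $u_H=u\circ\sigma_H\ne u$. Writing $H=H_{e,\lambda}$ with $\lambda<x_0\cdot e=\lambda_e$, I would compare with Step 1: the set $\Lambda_e^-\cap(-\infty,\lambda_e)$ and the radial symmetry should give $u(\sigma_H z)\ge u(z)$ for $z\in H$, for all $\lambda$ in an interval. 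Iterating the resulting monotonicity along $e$ would produce a contradiction with $u\in L^s$, mirroring Step 1.

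I expect this last step to be the main obstacle. The first attempt would be a direct mass comparison between $H\cap B(x_0,R)$ and its reflection, exploiting radial symmetry, since reflections of balls centered at $x_0$ are balls of the same radius that are shifted away. Everything is understood a.e., and $v$ can be taken as a nonincreasing representative.
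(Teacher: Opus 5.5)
The paper gives no proof of this lemma; it quotes it from \cite[Lemma 5.4]{MVJFA}, so your argument has to stand on its own. Your Steps 1 and 2 are sound:
\begin{itemize}
\item closedness of $\Lambda_e^{\pm}$;
\item the escape-of-mass argument as $\lambda\to\pm\infty$;
\item uniqueness of the symmetry value $\lambda_e$;
\item the common center $x_0$ obtained from the point reflection;
\item radial symmetry from invariance under all reflections through $x_0$.
\end{itemize}
The genuine gap is Step 3, which is where the \emph{nonincreasing} part of the conclusion comes from. As written it is only a heuristic (``should give'', ``would produce''). The mechanism you suggest---an interval of $\lambda$'s and an iteration along $e$---is not yet an argument, and you flag it yourself as unresolved.

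The missing observation is already contained in what you proved. Uniqueness of $\lambda_e$ means $\Lambda_e^+\cap\Lambda_e^-=\{\lambda_e\}$, since any $\lambda$ in the intersection gives $u=u_{H_{e,\lambda}}=u\circ\sigma_{e,\lambda}$. Hence $(-\infty,\lambda_e)$ is the disjoint union of the relatively closed sets $\Lambda_e^{+}\cap(-\infty,\lambda_e)$ and $\Lambda_e^{-}\cap(-\infty,\lambda_e)$. By connectedness, one of them is empty. Your Step 1 actually shows $\Lambda_e^-\cap(-\infty,-\Lambda]=\emptyset$. So $\lambda_e>-\Lambda$, the first set contains $(-\infty,-\Lambda]$, and therefore $(-\infty,\lambda_e]\subset\Lambda_e^+$. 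Because $\lambda_e=x_0\cdot e$, this says precisely that $u_H=u$ for every closed half-space $H$ with $x_0\in H$, which is exactly what your bisector argument needs. Applying it to a countable dense family of such half-spaces and using Fubini gives $v(r)\geq v(r')$ for a.e.\ $0<r<r'$, hence a nonincreasing representative of $v$.

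Your mass-comparison idea can also be completed, and with a single half-space. Take $x_0=0$ and $H=\{x\cdot e\geq-\mu\}$ with $\mu>0$; then $|\sigma_H x|^2=|x|^2+4\mu(x\cdot e+\mu)$. So $u_H=u\circ\sigma_H$ gives $v(r')\geq v(r)$ for a.e.\ $r\geq\mu$ and $r'\in[r,r+2\mu]$. Thus $v$ is essentially nondecreasing on $[\mu,\infty)$, hence $v=0$ there by integrability. Then $v=0$ on $(0,\mu)$ as well, because those radii are compared with radii $r'\geq 2\mu-r>\mu$. So $u\equiv0$, and no interval of $\lambda$'s or iteration is needed.
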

	
	\begin{proposition}\label{symmetry of ground state}
		Let $p>\frac{N+\a}{N}$. If $u\in \cM_1$ is a ground state solution to \eqref{eq}, then there exist $x_0\in\RN$ and $ v : (0, \infty) \to \mathbb{R} $ a monotone function such that for almost every $ x \in \mathbb{R}^N$, $u(x) = v(|x - x_0|)$ and $v(r)\to 0$ as $r\to+\infty$.
	\end{proposition}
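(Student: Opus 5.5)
By Proposition~\ref{constant sign} we may assume $u>0$; otherwise we replace $u$ by $-u$, which is again a ground state in $\cM_1$. The plan is to show that for every closed half-space $H$ either $u_H=u$ or $u_H=u\circ\sigma_H$. Lemma~\ref{symmetrizaton}, applied with $s=2$, then yields $x_0$ and a nonincreasing $v$ with $u(x)=v(|x-x_0|)$ a.e. Since $u$ is continuous, this holds everywhere after redefining $v$ on a null set. Because $u\in\cC_0(\RN)$ by Proposition~\ref{embedding}, we also get $v(r)\to0$ as $r\to+\infty$.

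Fix $H$. Polarization is a rearrangement on pairs of points $\{x,\sigma_H(x)\}$, so $u_H\geq0$ and $\lVert u_H\rVert_2=\lVert u\rVert_2$. Moreover $u_H\in X_1$ with $|\nabla u_H|$ equal a.e. to either $|\nabla u|$ or $|\nabla u|\circ\sigma_H$, arranged pointwise along reflected pairs. This is standard: $u_H$ is built from $\max/\min$ of $W^{1,q}$ functions, and its gradient is the gradient of one of them a.e. Consequently $u_H\in D(\Psi_0)$ and $\Psi_0(u_H)=\Psi_0(u)$. For the nonlocal term, the Riesz potential inequality for polarizations (the inequality underlying Lemma~\ref{polarization inequality}) gives $\cF_{\a,p}(u_H)\ge\cF_{\a,p}(u)$, because $(u_H)^p=(u^p)_H$.

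Suppose the reverse inequality fails strictly, i.e. $\cF_{\a,p}(u_H)>\cF_{\a,p}(u)$. Then $\cP_1(u_H)<\cP_1(u)=0$, so Lemma~\ref{uniqueness of parametre on Pohozaev} gives $t_{u_H}<1$. Using the identity $\cI(w)=\frac{\a+2p}{N+\a+2p}\Psi_0(w)+\frac{\a+2p-2}{2(N+\a+2p)}\lVert w\rVert_2^2$ on $\cM_1$, as in \eqref{energy obligatory}, we compute
\[
c_1\le \cI\big((u_H)_{t_{u_H}}\big)=\tfrac{\a+2p}{N+\a+2p}t_{u_H}^N\Psi_0(u)+\tfrac{\a+2p-2}{2(N+\a+2p)}t_{u_H}^{N+2}\lVert u\rVert_2^2<\cI(u)=c_1 ,
\]
which is a contradiction. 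The strict inequality uses $u\neq0$ and $p>\frac{N+\a}{N}$. Hence $\cF_{\a,p}(u_H)\le\cF_{\a,p}(u)$, which is exactly the hypothesis of Lemma~\ref{polarization inequality} applied to $f=u^p\in L^{\frac{2N}{N+\a}}$. That lemma gives $(u^p)_H=u^p$ or $(u^p)_H=u^p\circ\sigma_H$, and taking $p$-th roots gives the same alternative for $u$.

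The main obstacle is the claim $u_H\in X_1$ with the gradient identity, in particular that $\Psi_0(u_H)=\Psi_0(u)$ and $\lVert\nabla u_H\rVert_r=\lVert\nabla u\rVert_r$. I would handle it by writing $u_H=\max\{u,u\circ\sigma_H\}$ on $H$ and $\min\{u,u\circ\sigma_H\}$ on the complement. The function is continuous across $\partial H$ since $u=u\circ\sigma_H$ there. The chain rule for $\max/\min$ in $W^{1,1}_{\rm loc}$ then gives the gradient a.e., and the change of variables $x\mapsto\sigma_H(x)$ on the set where the two functions are swapped shows that integrals of any function of $|\nabla u|$, and of $|u|$, are preserved.
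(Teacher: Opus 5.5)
Your proposal is correct and follows the paper's proof essentially step for step. You reduce to $u>0$, use $\Psi_0(u_H)=\Psi_0(u)$, $\lVert u_H\rVert_2=\lVert u\rVert_2$ and the Riesz polarization inequality, and rule out strict inequality via $t_{u_H}<1$ and the energy identity on $\cM_1$. You then apply Lemma~\ref{polarization inequality} to $u^p$ and conclude with Lemma~\ref{symmetrizaton}. The only differences are matters of detail. You justify $u_H\in X_1$ and the gradient identity directly through the $\max/\min$ chain rule, where the paper simply cites \cite[Lemma 3.1 (ii)]{Weth}. You also make explicit that $v(r)\to 0$ follows from $u\in\cC_0(\RN)$.
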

	\begin{proof}
		Let $u\in \cM_1$ be a ground state solution to \eqref{eq} such that $\cI(u)=c_1$ and $H$ be any closed half-space. From Proposition \ref{constant sign} $u$ has constant sign. If $u<0$, it is clear that $-u>0$ is also a ground state solution to \eqref{eq}. Therefore, we need only to consider the case of positive solution. Recall that if $u$ is a weak solution to \eqref{eq}, by \cite[Proposition 2.10]{BIMM} we know that \( u \in W^{2,q}_{\mathrm{loc}}(\mathbb{R}^N) \) for each \( q \in [2, \infty) \), and \( |\nabla u| < 1 \) on \( \mathbb{R}^N \). Therefore, it is sufficient to exploit \cite[Lemma 3.1 (ii)]{Weth} for $\Psi_{0}(u)$ to get that 
		\begin{equation*}
			\Psi_{0}(u_{H})=\Psi_{0}(u).
		\end{equation*}
		In addition, there holds that
		\begin{equation*}
			\lVert u_{H}\rVert_{2}^2=\lVert u\rVert_{2}^2.
		\end{equation*}
		By Proposition \ref{embedding} \ref{embedding of Lp}, one has $|u|^{p}\in L^{\frac{2N}{N+\a}}(\RN)$. The polarization inequality for the Riesz kernel (see e.g., \cite{VanWill04}) gives
		\begin{equation*}
			\int_{\mathbb{R}^N} \int_{\mathbb{R}^N} \frac{|u_H(x)|^{p}|u_H(y)|^{p}}{|x - y|^{N-\alpha}} \, dx \, dy
			\geqslant
			\int_{\mathbb{R}^N} \int_{\mathbb{R}^N} \frac{|u(x)|^{p}|u(y)|^{p}}{|x - y|^{N-\alpha}} \, dx \, dy .
		\end{equation*}
		If the above inequality were strict, then $\cP_1(u_H)<0$. Thus, by Lemma \ref{uniqueness of parametre on Pohozaev} there would exist $t_H<1$ such that $(u_H)_{t_H}\in \cM_1$. Since $\Psi_0(u_H)=\Psi_0(u)$ and $\lVert u_H\rVert_2=\lVert u\rVert_2$, we would get
		\[
		c_1\leq \cI((u_H)_{t_H})
		<\frac{\a+2p}{N+\a+2p}\Psi_{0}(u)+\frac{\a+2p-2}{2(N+\a+2p)}\lVert u\rVert_{2}^2
		=\cI(u)=c_1,
		\]
		a contradiction. Hence equality holds, namely
		\begin{equation*}
			\int_{\mathbb{R}^N} \int_{\mathbb{R}^N} \frac{|u(x)|^{p}|u(y)|^{p}}{|x - y|^{N-\alpha}} \, dx \, dy = \int_{\mathbb{R}^N} \int_{\mathbb{R}^N} \frac{|u_H(x)|^{p}|u_H(y)|^{p}}{|x - y|^{N-\alpha}} \, dx \, dy,
		\end{equation*}
		Thus, it follows from Lemma \ref{polarization inequality} applied to $u^p$ that either $(u^p)_H=u^p$ or $(u^p)_H=u^p\circ\sigma_{H}$. Since $u>0$, this is equivalent to saying that either $u_H=u$ or $u_H=u\circ\sigma_{H}$. Since $H$ is arbitrary, the conclusion is derived from Lemma \ref{symmetrizaton}.
	\end{proof}
	\begin{proof}[Proof of Theorem \ref{thm of qualitative}]
		It follows from Proposition \ref{constant sign} and \ref{symmetry of ground state}.
	\end{proof}
	
	\section{Zero mass case and Sobolev-type inequality}\label{sec zero mass}
	Similar to Theorem \ref{Thm 1}, we first sketch the proof of the existence of minimizer for $c_0$. Throughout this section, the functional space is set by $X_0$, as well as $D(\Psi_{0})$ in the same meaning. Using the same notation in \eqref{scaling}, we define that 
	\begin{equation}\label{fiber function0mass}
		\begin{aligned}
			\cI_0(u_{t})&= t^N\int_{\mathbb{R}^N} 1 - \sqrt{1 - |\nabla u|^2}\, dx-\frac{t^{N+\a+2p}}{2p}\cF_{\a,p}(u),\\
			\frac{d\cI_0(u_{t})}{dt}&=t^{N-1}\left(N\int_{\mathbb{R}^N} 1 - \sqrt{1 - |\nabla u|^2}\, dx-\frac{N+\a+2p}{2p}t^{\a+2p}\cF_{\a,p}(u)\right),
		\end{aligned}
	\end{equation}
	and have an observation
	\begin{equation*}
		\frac{d\cI_0(u_{t})}{dt}\Big\rvert_{t=1}=\cP_0(u)\qquad\text{for }u\in D(\Psi_0)\backslash\{0\}.
	\end{equation*}
	It holds that
	\begin{lemma}\label{uniqueness of parametre on Pohozaev0mass}
		For each $u\in D(\Psi_0)\backslash\{0\}$, there exists unique $\tilde{t}_{u}\in (0,+\infty)$, which is a maximum of function $t\mapsto\cI_0(u_t)$, such that $\cP_0(u_{\tilde{t}_{u}})=0$, i.e., $u_{\tilde{t}_{u}}\in \cM_0$. Therefore, it holds that $\cP_0(u_t)>0$ for $0<t<\tilde{t}_u$ and $\cP_0(u_t)<0$ for $t>\tilde{t}_u$. Moreover, the map $u\mapsto \tilde{t}_{u}$ is continuous on $D(\Psi_0)\backslash\{0\}$.
	\end{lemma}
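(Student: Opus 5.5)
Following the proof of Lemma \ref{uniqueness of parametre on Pohozaev}, but now with an explicit solution, I will fix $u\in D(\Psi_0)\backslash\{0\}$ and study
\[
\cH_0(t,u):= N\Psi_0(u)-\frac{N+\a+2p}{2p}t^{\a+2p}\cF_{\a,p}(u),
\]
so that $\frac{d}{dt}\cI_0(u_t)=t^{N-1}\cH_0(t,u)$. Since $\Psi_0(u_t)=t^N\Psi_0(u)$ and $\cF_{\a,p}(u_t)=t^{N+\a+2p}\cF_{\a,p}(u)$ by change of variables, this also gives $\cP_0(u_t)=t^N\cH_0(t,u)$. Hence the sign of $\cP_0(u_t)$ is exactly the sign of $\cH_0(t,u)$.

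I first check that both coefficients are strictly positive. Since $u\nequiv0$ and $u\in X_0\hookrightarrow \cC_0(\RN)$, we have $\n u\not\equiv0$, so $\Psi_0(u)\geq\frac12\lVert\n u\rVert_2^2>0$ by \eqref{equivalent of Psi0}. Also $|u|^p\geq0$ is nontrivial, so $\cF_{\a,p}(u)>0$ because the Riesz kernel is positive. Finiteness of $\cF_{\a,p}(u)$ comes from Proposition \ref{differentiable of nonlocal}, which is where $p>\frac{N+\a}{N-2}$ is used.

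With both coefficients positive, $t\mapsto\cH_0(t,u)$ is strictly decreasing on $(0,\infty)$. It tends to $N\Psi_0(u)>0$ as $t\to0$ and to $-\infty$ as $t\to\infty$. Its unique zero is therefore given explicitly by
\[
\tilde t_u=\left(\frac{2Np\,\Psi_0(u)}{(N+\a+2p)\cF_{\a,p}(u)}\right)^{\frac{1}{\a+2p}}.
\]
It follows that $\cP_0(u_t)>0$ for $t<\tilde t_u$ and $\cP_0(u_t)<0$ for $t>\tilde t_u$. Consequently $t\mapsto \cI_0(u_t)$ increases on $(0,\tilde t_u)$ and decreases afterwards, so $\tilde t_u$ is its unique maximum point and $u_{\tilde t_u}\in\cM_0$.

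For continuity, I will use that $\cF_{\a,p}$ is continuous on $X_0$ (Proposition \ref{differentiable of nonlocal}) and positive on $D(\Psi_0)\backslash\{0\}$. It then remains to show that $\Psi_0$ is continuous on $D(\Psi_0)$ with respect to the $X_0$-norm, which I expect to be the only delicate point, since $\Psi_0$ is merely lower semicontinuous in general. I would argue as follows. If $u_n\to u$ in $X_0$ with $u_n,u\in D(\Psi_0)$, then $\n u_n\to\n u$ in $L^2$. On $D(\Psi_0)$ the integrand $\phi(\xi)=1-\sqrt{1-|\xi|^2}$, restricted to $|\xi|\leq1$, satisfies $\phi(\xi)\leq|\xi|^2$. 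Passing to an a.e. convergent subsequence, the generalized dominated convergence theorem with dominating functions $|\n u_n|^2$, which converge in $L^1$, gives $\Psi_0(u_n)\to\Psi_0(u)$. The subsequence principle then gives convergence of the whole sequence. The explicit formula for $\tilde t_u$ then yields continuity; equivalently, one can apply the implicit function theorem to $\cH_0$ at $\tilde t_u$, where $\partial_t\cH_0<0$, exactly as in Lemma \ref{uniqueness of parametre on Pohozaev}.
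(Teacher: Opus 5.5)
Your proof is correct and follows the same scheme the paper intends: its proof only says the argument is analogous to Lemma \ref{uniqueness of parametre on Pohozaev}, i.e.\ a sign analysis of the reduced fiber derivative $\cH$ followed by the implicit function theorem. In the zero-mass case your explicit formula for $\tilde t_u$ makes uniqueness and continuity immediate, and your generalized dominated convergence argument shows that $\Psi_0$ is continuous on $D(\Psi_0)$ in the $X_0$-norm. The paper leaves that continuity implicit, but either route needs it, since $\Psi_0$ is not $\cC^1$ on $X_0$.
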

	\begin{proof}
		The proof is analogous to Lemma \ref{uniqueness of parametre on Pohozaev}.
	\end{proof}
	\begin{lemma}\label{minimizer=critical point0mass}
		Let $u_0\in \cM_0$ satisfy $\cI_0(u_0)=c_0$. Then $u_0$ is a critical point of $\cI_0$.
	\end{lemma}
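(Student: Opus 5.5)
We follow the proof of Lemma \ref{minimizer=critical point} and apply the abstract Lemma \ref{abstract critical point} with $X=X_0$. Set
\[
\mathfrak A(u):=\Psi_0(u),\qquad \mathfrak B(u):=\frac{1}{2p}\cF_{\a,p}(u),
\]
so that $\cI_0=\mathfrak A-\mathfrak B$. Since $p>\frac{N+\a}{N-2}$, Proposition \ref{differentiable of nonlocal} gives $\mathfrak B\in\cC^1(X_0,\R)$. The functional $\mathfrak A$ is proper, convex and lower semicontinuous on $X_0$, and its effective domain $D(\Psi_0)$ is convex. For the scaling we again take $S_tu=u_t$ from \eqref{scaling}. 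Then $S_1u=u$, $S_tS_r=S_{tr}$, and $S_tu\nequiv0$ whenever $u\nequiv0$. Moreover $\mathfrak A(S_tu)=t^N\mathfrak A(u)$, so (S1)--(S3) hold with $\alpha(t)=t^N$. By \eqref{fiber function0mass} the fiber maps are polynomial in $t$, hence $\cC^2$.

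Next we identify the abstract objects. We have
\[
\mathfrak R(t,u)=\frac{d}{dt}\mathfrak B(u_t)=\frac{N+\a+2p}{2p}\,t^{N+\a+2p-1}\cF_{\a,p}(u),
\]
which is $\cC^1$ on $(0,\infty)\times X_0$ because $\cF_{\a,p}$ is $\cC^1$. It follows that $\mathfrak P(u)=N\mathfrak A(u)-\mathfrak R(1,u)=\cP_0(u)$ and $\mathfrak M=\cM_0$. Conditions (A1) and (A2), namely the unique maximizing fiber parameter and the sign change of $\cP_0(u_t)$ across $\tilde t_u$, together with continuity of $u\mapsto\tilde t_u$, are exactly Lemma \ref{uniqueness of parametre on Pohozaev0mass}.

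It remains to verify (A3) at $u_0$. We compute
\[
\frac{\mathfrak R(t,u_0)}{\alpha'(t)}=\frac{N+\a+2p}{2Np}\,t^{\a+2p}\cF_{\a,p}(u_0),
\]
so its derivative at $t=1$ equals $\frac{(N+\a+2p)(\a+2p)}{2Np}\cF_{\a,p}(u_0)$. Since $u_0\in\cM_0$, this equals $(\a+2p)\Psi_0(u_0)$. This quantity is strictly positive because $u_0\nequiv0$ and $u_0\in X_0\hookrightarrow\cC_0(\RN)$ force $\nabla u_0\nequiv0$, while $\Psi_0(u_0)\ge\frac12\|\nabla u_0\|_2^2$ by \eqref{equivalent of Psi0}. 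Hence (A3) holds, and Lemma \ref{abstract critical point} yields that $u_0$ is a critical point of $\cI_0$.

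The only potentially delicate step is making sure that the abstract lemma's hypotheses, stated for a general Banach space, transfer to $X_0$, where no $L^2$ control is available. We expect this to cause no trouble: the nonlocal term is $\cC^1$ on $X_0$ precisely in the range $p>\frac{N+\a}{N-2}$, and the positivity in (A3) is even simpler than in the case $\omega=1$, since the mass term is absent.
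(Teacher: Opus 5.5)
Your proposal is correct and follows essentially the same route as the paper. The paper also applies Lemma \ref{abstract critical point} with $\mathfrak A=\Psi_0$, $\mathfrak B=\frac{1}{2p}\cF_{\a,p}$, $S_tu=u_t$ and $\alpha(t)=t^N$, takes (A1)--(A2) from Lemma \ref{uniqueness of parametre on Pohozaev0mass}, and computes the (A3) quantity as $\frac{(N+\a+2p)(\a+2p)}{2Np}\cF_{\a,p}(u_0)$. Your only deviation is minor: you establish positivity through the Poho\v{z}aev identity, $(\a+2p)\Psi_0(u_0)>0$, whereas the paper uses $\cF_{\a,p}(u_0)>0$ directly, and both are valid.
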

	\begin{proof}
		We apply Lemma \ref{abstract critical point}. Let $X=X_0$,
		\[
		\mathfrak B(u):=\frac{1}{2p}\cF_{\a,p}(u),
		\qquad
		\mathfrak A(u):=\Psi_0(u),
		\]
		and let $S_tu=u_t$ be the scaling in \eqref{scaling}. Then
		$\cI_0=\mathfrak A-\mathfrak B$, $\mathfrak B\in\cC^1(X_0,\R)$ by Proposition
		\ref{differentiable of nonlocal}, and $\mathfrak A$ is proper, convex and
		lower semicontinuous on $X_0$, with $D(\mathfrak A)=D(\Psi_0)$ convex.
		Moreover, $S_1u=u$, $S_tu\nequiv0$ for $u\nequiv0$, $S_t(S_ru)=S_{tr}u$, and
		\[
		\mathfrak A(S_tu)=\Psi_0(u_t)=t^N\Psi_0(u)=t^N\mathfrak A(u).
		\]
		Thus {\rm (S1)--(S3)} hold with $\alpha(t)=t^N$. The fiber maps are of
		class $\cC^2$ by \eqref{fiber function0mass}.
		
		In this case,
		\[
		\mathfrak R(t,u):=\frac{d}{dt}\mathfrak B(u_t)
		=\frac{N+\a+2p}{2p}t^{N+\a+2p-1}\cF_{\a,p}(u),
		\]
		so $\mathfrak R\in\cC^1((0,+\infty)\times X_0,\R)$.  Therefore
		\[
		\mathfrak P(u)=\left.\frac{d}{dt}\cI_0(u_t)\right|_{t=1}
		=N\mathfrak A(u)-\mathfrak R(1,u)=\cP_0(u),
		\qquad
		\mathfrak M=\cM_0.
		\]
		Thus {\rm (A1)} and {\rm (A2)} follow directly from Lemma
		\ref{uniqueness of parametre on Pohozaev0mass}. Finally,
		\[
		\left.\frac{d}{dt}\left(\frac{\mathfrak R(t,u_0)}{\alpha'(t)}\right)\right|_{t=1}
		=\frac{(N+\a+2p)(\a+2p)}{2Np}\cF_{\a,p}(u_0)>0,
		\]
		so {\rm (A3)} is satisfied. Hence all assumptions of Lemma
		\ref{abstract critical point} hold, and $u_0$ is a critical point of
		$\cI_0$.
	\end{proof}
	
	\begin{lemma}\label{positive energy level0mass}
		Let $m_0$ be the mountain pass level defined by
		\begin{equation}\label{mountain pass level0mass}
			m_0:=\inf_{\gamma\in \Gamma} \max_{\tau \in [0,1]} \cI_0(\gamma(\tau)),\qquad  \Gamma := \{\gamma\in \C\left([0,1]; X_0\right): \gamma(0)=0,\,  \cI_0(\gamma(1))<0\}.
		\end{equation}
		Then it holds that $c_0= m_0>0$.
	\end{lemma}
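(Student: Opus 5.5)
We follow the scheme of Lemma \ref{positive energy level}, with three steps: the class $\Gamma$ is nonempty, $m_0>0$, and $c_0=m_0$. For nonemptiness, \eqref{fiber function0mass} shows that for every $u\in D(\Psi_0)\backslash\{0\}$ we have $\cI_0(u_t)\to 0$ as $t\to0^+$ and $\cI_0(u_t)\to-\infty$ as $t\to+\infty$, since $\a+2p>0$. The scaled path $\tau\mapsto u_{\tau T}$, with $T$ large and the value at $\tau=0$ set to $0$, therefore lies in $\Gamma$. Continuity at $\tau=0$ in $X_0$ holds because $\lVert\n u_t\rVert_2^2=t^N\lVert \n u\rVert_2^2$ and $\lVert \n u_t\rVert_r^r=t^N\lVert\n u\rVert_r^r$.

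For $m_0>0$, points with $u\notin D(\Psi_0)$ have $\cI_0(u)=+\infty$, so we work on $D(\Psi_0)$. There $|\n u|\le1$ gives $\lVert\n u\rVert_r^r\le\lVert\n u\rVert_2^2$, and by \eqref{equivalent of Psi0}, $\Psi_0(u)\ge\frac14(\lVert\n u\rVert_2^2+\lVert\n u\rVert_r^r)$. The main obstacle is to compare this with $\lVert u\rVert_{X_0}^2=\lVert\n u\rVert_2^2+\lVert\n u\rVert_r^2$, since $\lVert\n u\rVert_r^r$ is not $\gtrsim \lVert\n u\rVert_r^2$ for large norms. We restrict to the small sphere $\lVert u\rVert_{X_0}=\beta\le1$. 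Then $\lVert \n u\rVert_r\le1$, hence $\lVert\n u\rVert_r^r\le \lVert\n u\rVert_r^2$. To get a lower bound, note that the sphere condition forces either $\lVert\n u\rVert_2^2\ge\beta^2/2$ or $\lVert\n u\rVert_r^2\ge\beta^2/2$, so that $\Psi_0(u)\ge\frac14\min\{\beta^2/2,(\beta^2/2)^{r/2}\}\gtrsim\beta^r$. On the other side, Hardy–Littlewood–Sobolev together with Proposition \ref{embedding} \ref{embedding of Lp} gives $\cF_{\a,p}(u)\le C\lVert u\rVert_{2Np/(N+\a)}^{2p}\le C\beta^{2p}$, which is valid because $\frac{2Np}{N+\a}>2^*$ when $p>\frac{N+\a}{N-2}$. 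This is the delicate point: we need $2p>r$. If $r\ge 2p$, we instead interpolate. Since $|\n u|\le 1$ and $\lVert \n u\rVert_2^2\le 4\Psi_0(u)$, the embedding $X_0\hookrightarrow L^{q}$ can be applied only through the $D^{1,2}$ part: $\lVert u\rVert_{2^*}\lesssim \lVert\n u\rVert_2$. Moreover, $|u|$ is controlled in $L^\infty$ by Morrey applied with $\lVert \n u\rVert_r^r\le \lVert \n u\rVert_2^2$, giving $\lVert u\rVert_\infty\lesssim \lVert \n u\rVert_2^{\theta}$ for some $\theta>0$. Hence $\cF_{\a,p}(u)\lesssim \lVert\n u\rVert_2^{2+\eps}\lesssim\Psi_0(u)^{1+\eps/2}$ for some $\eps>0$, using $\frac{2Np}{N+\a}>2^*$. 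This yields $\cI_0(u)\ge \Psi_0(u)-C\Psi_0(u)^{1+\eps/2}>0$ on the set $\{\Psi_0(u)=\rho\}$ for small $\rho>0$. Since every path in $\Gamma$ starts at $0$ and ends where $\cI_0<0$ (so that $\Psi_0$ is positive there), and $\Psi_0$ is continuous along paths in $D(\Psi_0)$ (paths leaving $D(\Psi_0)$ have $\max \cI_0=+\infty$), each path crosses this level set. Hence $m_0\ge\inf_{\Psi_0=\rho}\cI_0>0$. I would present this $\Psi_0$-level version, since it avoids the mismatch of $r$ and $2p$.

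For $c_0\ge m_0$, take $u\in\cM_0$ and use the scaled path $\gamma(\tau)=u_{\tau T_u}$. By Lemma \ref{uniqueness of parametre on Pohozaev0mass}, the maximum of $\cI_0$ along this path is attained at $t=1$, giving $m_0\le\cI_0(u)$. For $c_0\le m_0$, we show that every $\gamma\in\Gamma$ meets $\cM_0$. Near $\tau=0$, the same estimate gives $\cP_0(u)\ge N\Psi_0(u)-C\Psi_0(u)^{1+\eps/2}>0$ for $\gamma(\tau)\neq0$ small. The point $\gamma(\tau)$ is nonzero for $\tau$ close to the last zero of $\gamma$, so we restrict to the subpath after that last zero. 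At the endpoint, $\cP_0(\gamma(1))=N\cI_0(\gamma(1))-\frac{\a+2p}{2p}\cF_{\a,p}(\gamma(1))<0$. Continuity of $\cP_0$ along the path and the intermediate value theorem then give $\tau_0$ with $\gamma(\tau_0)\in\cM_0$, so $\max\cI_0\circ\gamma\ge c_0$.
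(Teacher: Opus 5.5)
Your proof is correct, and it follows the paper's overall scheme: nonemptiness of $\Gamma$ via scaling, then $m_0>0$, then $m_0\le c_0$ via the path $\tau\mapsto u_{\tau T_u}$, and $c_0\le m_0$ by showing every path meets $\cM_0$. The positivity estimate, however, is genuinely different, and here your route is the one that works.

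The paper argues on a small sphere $\lVert u\rVert_{X_0}=\beta$ with the chain $\Psi_0(u)\ge\frac14(\lVert\n u\rVert_2^2+\lVert\n u\rVert_r^r)\ge\frac14\lVert u\rVert_{X_0}^2$. The second inequality requires $\lVert\n u\rVert_r\ge1$, so it fails exactly in the small-norm regime where it is used. You wrote ``for large norms'', but your computation correctly places the problem at small norms, where one only gets $\Psi_0\gtrsim\beta^r$ against $\cF_{\a,p}\lesssim\beta^{2p}$.

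Your replacement bounds $\cF_{\a,p}$ through $\lVert\n u\rVert_2$ alone. By interpolation, $\cF_{\a,p}(u)\lesssim\lVert u\rVert_{2^*}^{2(N+\a)/(N-2)}\lVert u\rVert_\infty^{2p-2(N+\a)/(N-2)}$. Here $\frac{2(N+\a)}{N-2}>2$. The local Morrey estimate together with $\lVert\n u\rVert_r^r\le\lVert\n u\rVert_2^2$ gives $\lVert u\rVert_\infty\lesssim\lVert\n u\rVert_2+\lVert\n u\rVert_2^{2/r}$. Hence $\cF_{\a,p}\lesssim\Psi_0^{\kappa/2}$ with $\kappa>2$ on $\{\Psi_0\le\frac12\}$.

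Working on the level set $\{\Psi_0=\rho\}$ then makes the argument independent of how $r$ compares with $2p$. The same bound gives $\cP_0>0$ near $0$. Your restriction to the subpath after the last zero of $\gamma$ also settles a point the paper skips: otherwise the intermediate value theorem could return $\gamma(\tau_0)=0\notin\cM_0$.

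The paper's version would be shorter, using only HLS plus the embedding, if its norm comparison held. Yours costs one interpolation but is valid for every admissible $r$. The same remedy also applies to the identical step \eqref{lower estimate of energy} in Lemma \ref{positive energy level}.

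Two small points should be written out.
\begin{enumerate}
\item From $\cI_0\ge\Psi_0-C\Psi_0^{\kappa/2}$ you should conclude $\Psi_0(\gamma(1))>\rho$, not merely $\Psi_0(\gamma(1))>0$. That is what forces the path to cross $\{\Psi_0=\rho\}$.
\item You use continuity of $\Psi_0$ along $X_0$-continuous paths in $D(\Psi_0)$, both for the crossing and for the intermediate value theorem applied to $\cP_0$. This deserves a line. Since $0\le1-\sqrt{1-|\xi|^2}\le|\xi|^2$ for $|\xi|\le1$ and $|\n u_n|^2\to|\n u|^2$ in $L^1$, the generalized dominated convergence theorem, applied along a.e.-convergent subsequences, gives $\Psi_0(u_n)\to\Psi_0(u)$.
\end{enumerate}
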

	\begin{proof}
		From \eqref{fiber function0mass} one has that $\cI_0(u_t)\to -\infty$ as $t\to +\infty$ for any nonzero $u\in D(\Psi_{0})$, so that the definition for $m_0$ is well-defined.
		
		Using Hardy-Littlewood-Sobolev inequality and Proposition \ref{embedding} \ref{embedding of Lp}, one has, with the notation of $\frac{2Np}{N+\a}>2^*$, that
		\begin{equation*}
			\cF_{\a,p}(u)\leq  C \left(\irn |u|^{\frac{2Np}{N+\a}}\right)^{\frac{N+\a}{N}}\leq C\lVert u\rVert_{X_0}^{2p}\qquad \text{for } u\in X_0.
		\end{equation*}
		Therefore, we exploit \eqref{equivalent of Psi0}  to get 
		\begin{equation*}
			\begin{aligned}
				\cI_0(u) &= \Psi_0(u)- \frac{1}{2p}\cF_{\a,p}(u)\\
				&\geq \frac{1}{4}\left(\lVert \n u\rVert_{2}^2 + \lVert \n u\rVert_{r}^r\right)  - C\lVert u\rVert_{X_0}^{2p}\\
				& \geq \frac{1}{4}\lVert u\rVert_{X_0}^{2} - C\lVert u\rVert_{X_0}^{2p}
			\end{aligned}
		\end{equation*}
		for $u\in D(\Psi_{0})\backslash\{0\}$. Thus, we infer that there holds $\inf_{\lVert u\rVert_{X_0}=\beta}\cI_0(u)>0$ for some $\beta>0$, which implies that $m_0>0$.
		
		The proof for the identity of $c_0$ with $m_0$ is same as that of Lemma \ref{positive energy level} and we omit the detail here.
	\end{proof}
	
	We divide the proof of Theorem \ref{thm 0mass&sobolev ineq} into two parts, the existence of minimizer  and the Sobolev-type inequality, respectively.
	\begin{proof}[Proof of first part of Theorem \ref{thm 0mass&sobolev ineq}]
		Let $(u_n)_{n\in\N}\subset \cM_0$ be a minimizing sequence for $c_0$. Then we get from Lemma \ref{positive energy level0mass} and $\cP_0(u_n)=0$ that
		\begin{equation*}
			\cI_0(u_n) = \frac{\a+2p}{N+\a+2p}\Psi_{0}(u_n)\to c_0>0,
		\end{equation*}
		which implies that $\{\Psi_{0}(u_n)\}_{n\in \N}$ is bounded. Moreover, from \eqref{equivalent of Psi0} and $|\n u_n|\leq1$ a.e on $\RN$ for every $n\in \N$ we can infer that $(u_n)_{n\in\N}$ is bounded in $X_0$. By Lemma \ref{profile decomposition}, there is $k\in \N\cup \{+\infty\}$, $(y_{n}^{i})_{n\in\N}$ and $\tilde{u}_{i}\in X_0$ such that 
		\begin{equation}\label{splliting formulae0mass}
			\begin{aligned}
				u_{n}(\cdot+y_{n}^{i})&\weakto \tilde{u}_{i}\qquad\text{{\rm weakly in }}X_0,\, \text{{\rm as }}n\to\infty; \\
				u_{n}(x+y_{n}^{i})&\to \tilde{u}_{i}(x)\qquad\text{{\rm a.e. on }}\RN,\, \text{{\rm as }}n\to\infty;\\ 
				\lim_{n\to \infty}|y_{n}^{i}-y_{n}^{j}|&= +\infty\qquad\text{{\rm for }}i\neq j;\\
				\liminf_{n\to\infty}\Psi_0(u_n)&\geq \sum_{i=0}^{k}\Psi_{0}(\tilde{u}_{i});\\
				\lim_{n\to \infty}\cF_{\a,p}(u_n) &= \sum_{i=0}^{k}\cF_{\a,p}(\tilde{u}_{i}),
			\end{aligned}
		\end{equation}
		If $k=0$ and $\tilde{u}_0=0$, then 
		\begin{equation*}
			\cI_0(u_n) = \frac{\a+2p}{N+\a+2p}\Psi_{0}(u_n)\to 0
		\end{equation*}
		and $c_0=0$, which gives a contradiction. Thus, either $k>0$ or $k=0$ with $\tilde{u}_0\neq0$. Since $D(\Psi_{0})$ is convex and closed, then it is weakly closed and all $\tilde{u}_{i}$ are located in $D(\Psi_{0})$. Set $J:=\{i: \tilde{u}_i\nequiv 0\}\neq\emptyset$. By \eqref{splliting formulae0mass} and $\cP_0(u_n)=0$ we get
		\begin{equation*}
			\sum_{i\in J}N\Psi_{0}(\tilde{u}_i)\leq \liminf_{n\to \infty} N\Psi_{0}(u_n) = \liminf_{n\to \infty} \frac{N+\a+2p}{2p} \cF_{\a,p}(u_n)= \frac{N+\a+2p}{2p}\sum_{i\in J}\cF_{\a,p}(\tilde{u}_i).
		\end{equation*}
		This suggests us that there is $j\in J$ such that $\cP_0(\tilde{u}_j)\leq 0$, as well the parameter $t_j$ such that $t_{j}\tilde{u}_{j}(\frac{\cdot}{t_{j}}):= \tilde{w}_j\in \cM_0$ satisfies $t_j\leq 1$. On the other hand, by \eqref{splliting formulae0mass} and  Fatou's Lemma one has
		\begin{equation}\label{energy obligatory0mass}
			\begin{aligned}
				c_0\leq \cI_0(\tilde{w}_j)&=\frac{\a+2p}{N+\a+2p}\Psi_{0}(\tilde{w}_j)=\frac{\a+2p}{N+\a+2p}t_{j}^{N}\Psi_{0}(\tilde{u}_j)\\
				&\leq \frac{\a+2p}{N+\a+2p}\Psi_{0}(\tilde{u}_j)\leq \liminf_{n\to \infty}\frac{\a+2p}{N+\a+2p}\Psi_{0}(u_{n}(\cdot+y_{n}^{j}))=\liminf_{n\to \infty}\cI_0(u_n)= c_0.
			\end{aligned}
		\end{equation}
		Therefore, we conclude that $t_j=1$ and $\tilde{u}_j\in \cM_0$ with $\cI_0(\tilde{u}_j)=c_0$. Thus, $\tilde{u}_j$ is a minimizer for $c_0$. Moreover, combining \eqref{splliting formulae0mass} and \eqref{energy obligatory0mass} yields that $k=0$ and we relabel $\tilde{u}_j$ by $u_0$. Therefore, $u_0$ is a critical point of $\cI_0$ by Lemma \ref{minimizer=critical point0mass}. Then a similar discussion from the proof of Theorem \ref{Thm 1} is exploited to deduce that $u_0$ is a classical ground state solution to \eqref{eq} when $\omega=0$.
	\end{proof}
	
	\begin{proof}[Proof of second part of Theorem \ref{thm 0mass&sobolev ineq}]
		Let $u_0$ be the minimizer over $\cM_0$, that is, $\cI_0(u_0)=c_0$. For the assertion (i), let $u$ be a function in $X_0$ and $v=\tilde{t}_{u}u\left(\cdot/\tilde{t}_{u}\right)\in \cM_0$ by Lemma \ref{uniqueness of parametre on Pohozaev0mass}. Therefore, using $\cP_0(v)=0$ we have
		\begin{equation}\label{comparison bt c0  any}
			c_0=\cI_0(u_0) \leq \cI_0(v)= \frac{\a+2p}{N+\a+2p}\Psi_{0}(v) = \tilde{t}_{u}^{N}\frac{\a+2p}{N+\a+2p}\Psi_{0}(u).
		\end{equation}
		Moreover, $\cP_0(v)= 0$ yields that 
		\begin{equation}\label{resolve of tu}
			N\tilde{t}_{u}^{N}\Psi_{0}(u)= \frac{N+\a+2p}{2p}\tilde{t}_{u}^{N+\a+2p}\cF_{\a,p}(u).
		\end{equation}
		Consequently, we can infer \eqref{Sobolev-type inequality} by resolving $\tilde{t}_u$ from \eqref{resolve of tu} and plugging it into \eqref{comparison bt c0  any}.
		
		Furthermore, the equality holds if and only if $v=\tilde{t}_{u}u\left(\cdot/\tilde{t}_{u}\right)\in \cM_0$ is a minimizer for $c_0$ and the assertion of (ii) holds. Then we complete the entire proof.
	\end{proof}
	
	\begin{proof}[Proof of Theorem \ref{thm:zeroradial}]
		The argument is same as the proof of Theorem \ref{thm of qualitative}.
	\end{proof}

	{\bf Acknowledgements.}
	The authors were partly supported by the National Science Centre, Poland (Grant No. 2023/51/B/ST1/00968).
	
	
	\appendix
	
	\section{Minimizers are critical points -- abstract result}\label{Appendix}
	
	\begin{lemma}\label{abstract critical point}
		Let $X$ be a Banach space and let
		\[
		\mathfrak I=\mathfrak A-\mathfrak B,
		\]
		where $\mathfrak A:X\to(-\infty,+\infty]$ is proper, convex and lower semicontinuous, and
		$\mathfrak B\in\mathcal{C}^1(X,\mathbb{R})$. Set
		\[
		D(\mathfrak A):=\{u\in X:\mathfrak A(u)<+\infty\}
		\]
		and assume that $D(\mathfrak A)$ is convex.
		
		Let $S:(0,+\infty)\times X\to X$ be a scaling map, denoted by
		\[
		S_tu:=S(t,u),
		\]
		such that 
		\begin{enumerate}[label={\rm (S\arabic*)}]
			\item $S_1u=u$ and $S_tu\neq0$ whenever $u\neq0$;
			\item for any $t,r>0$ and $u\in X$, it holds $S_{tr}u=S_{t}(S_{r}u)$;
			\item it holds that $\mathfrak A(S_t u) = \alpha(t)\mathfrak A(u)$ for every $t>0$ and $u\in D(\mathfrak A)\setminus\{0\}$, where $\alpha\in \cC^2(0,+\infty)$ satisfies $\alpha(1)=1$ and $\alpha'(t)>0$ for all $t>0$.
		\end{enumerate}
		Suppose that, for every
		$u\in D(\mathfrak A)\setminus\{0\}$, the fiber map
		\[
		\gamma_u(t):=\mathfrak I(S_tu)
		\]
		is of class $\cC^1$ with respect to $t>0$. Moreover, assume that 
		$\mathfrak R :(0,+\infty)\times X\to \R$ given by
		\begin{equation*}
			\mathfrak R(t,u):=\frac{d}{dt}\mathfrak B(S_t u),\quad\hbox{for }(t,u)\in (0,+\infty)\times X
		\end{equation*}
		is of class $\cC^1$.
		Define the Pohožaev functional by
		\[
		\mathfrak P(u):=\left.\frac{d}{dt}\gamma_u(t)\right|_{t=1}
		=\alpha'(1)\mathfrak A(u)-\mathfrak R(1,u),
		\]
		and the Pohožaev set by
		\[
		\mathfrak M:=
		\{u\in D(\mathfrak A)\setminus\{0\}:\mathfrak P(u)=0\}.
		\]
		Let $u_0\in\mathfrak M$ satisfy
		\[
		\mathfrak I(u_0)=\inf_{\mathfrak M}\mathfrak I.
		\]
		Assume that the following conditions hold:
		\begin{enumerate}[label={\rm (A\arabic*)}]
			\item For every $u\in D(\mathfrak A)\setminus\{0\}$, there exists a unique $t_u>0$ such that
			\[
			S_{t_u}u\in\mathfrak M.
			\]
			Moreover, $t_u$ is the unique maximum point of the map
			\[
			t\mapsto \mathfrak I(S_tu).
			\]
			
			\item The map
			\[
			u\mapsto t_u
			\]
			is continuous on $D(\mathfrak A)\setminus\{0\}$.
			
			\item There holds      
			\[
			\left.\frac{d}{dt}\left(\frac{\mathfrak R(t,u_0)}{\alpha'(t)}\right)\right|_{t=1}>0.
			\]
		\end{enumerate}
		
		Then $u_0$ is a critical point of $\mathfrak I$ in the sense of Szulkin, namely
		\[
		\mathfrak A(u)-\mathfrak A(u_0)-\mathfrak B'(u_0)(u-u_0)\geq0
		\qquad\text{for all }u\in X.
		\]
	\end{lemma}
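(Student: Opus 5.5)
The plan is a direct variational argument along convex perturbations, projected back onto $\mathfrak M$ by the scaling. The projection is controlled through the fiber maps rather than through differentiability of $u\mapsto S_tu$, which is not assumed.

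\emph{Setup.} Fix $v\in X$. If $v\notin D(\mathfrak A)$, the desired inequality is trivial because $\mathfrak A(v)=+\infty$ and $\mathfrak A(u_0)<\infty$. So let $v\in D(\mathfrak A)$ and set $w_s:=u_0+s(v-u_0)$ for $s\in(0,1]$. By convexity of $D(\mathfrak A)$ we have $w_s\in D(\mathfrak A)$. Since $w_s\to u_0\neq0$ in $X$, also $w_s\neq0$ for $s$ small. By (A1) let $t_s:=t_{w_s}$; by (A2) and $t_{u_0}=1$ (since $u_0\in\mathfrak M$ and $t_{u_0}$ is unique), we get $t_s\to1$ as $s\to0^+$.

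\emph{Comparison on $\mathfrak M$.} Since $S_{t_s}w_s\in\mathfrak M$, minimality gives $\mathfrak I(S_{t_s}w_s)\geq\mathfrak I(u_0)$. By (A1), $t=1$ is the maximum point of $t\mapsto\mathfrak I(S_tu_0)$, so $\mathfrak I(u_0)\ge\mathfrak I(S_{t_s}u_0)$. Combining these with (S3) and convexity of $\mathfrak A$, namely $\mathfrak A(w_s)-\mathfrak A(u_0)\le s(\mathfrak A(v)-\mathfrak A(u_0))$, we obtain
\[
\alpha(t_s)\,s\big(\mathfrak A(v)-\mathfrak A(u_0)\big)\;\ge\;\mathfrak B(S_{t_s}w_s)-\mathfrak B(S_{t_s}u_0).
\]

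\emph{Expanding the $\mathfrak B$-difference (the main obstacle).} The right-hand side cannot be expanded via a chain rule in $u$. Instead, I will use the definition of $\mathfrak R$ together with (S1):
\[
\mathfrak B(S_tw)-\mathfrak B(S_tu_0)=\mathfrak B(w)-\mathfrak B(u_0)+\int_1^{t}\big(\mathfrak R(\tau,w)-\mathfrak R(\tau,u_0)\big)\,d\tau .
\]
Apply this with $w=w_s$ and $t=t_s$.

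First, $\mathfrak B(w_s)-\mathfrak B(u_0)=s\,\mathfrak B'(u_0)(v-u_0)+o(s)$, since $\mathfrak B\in\cC^1$.

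Second, because $\mathfrak R\in\cC^1$, the mean value theorem in the $u$-variable gives
\[
|\mathfrak R(\tau,w_s)-\mathfrak R(\tau,u_0)|\le C s\|v-u_0\|
\]
uniformly for $\tau$ in a neighbourhood of $1$ and $s$ small, using continuity of $\partial_u\mathfrak R$ near $(1,u_0)$. Hence the integral term is bounded by $Cs|t_s-1|=o(s)$.

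\emph{Conclusion.} Dividing the comparison inequality by $s$ and letting $s\to0^+$, with $\alpha(t_s)\to\alpha(1)=1$, yields
\[
\mathfrak A(v)-\mathfrak A(u_0)-\mathfrak B'(u_0)(v-u_0)\ge0,
\]
which is the claim. In this route (A3) does not seem necessary. If the uniform estimate on $\mathfrak R$ turned out delicate, I would use (A3) together with the implicit function theorem applied to $\mathfrak P(S_tw)=0$ to obtain $|t_s-1|=O(s)$, which makes the integral term harmless even with a cruder bound. That is where I would expect (A3) to enter the authors' argument.
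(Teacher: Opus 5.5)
Your proof is correct, and it takes a genuinely different and shorter route than the paper.

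\textbf{The paper's route.} It compares $\mathfrak I(w_n)$ with $\mathfrak I(v_n)$ along the fiber of the perturbed point $v_n=(1-\tau_n)u_0+\tau_n u$. It writes $\mathfrak I(v_n)-\mathfrak I(w_n)=(1-t_n)\gamma_{v_n}'(\xi_n)$ with $\gamma_{v_n}'(\xi_n)\to\mathfrak P(u_0)=0$. This yields the Szulkin inequality only after proving that $(1-t_n)/\tau_n$ stays bounded. That bound is where (A3) enters, through two separate contradiction arguments: one uses the auxiliary map $\mathfrak E$ when $t_n<1$; the other uses $G=\mathfrak R/\alpha'$ when $t_n>1$, with a further split according to the sign of $\mathfrak A(v_n)-\mathfrak A(u_0)$.

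\textbf{Your route.} You sandwich $\mathfrak I(u_0)$ between two values at the same scale $t_s$:
\begin{itemize}
\item $\mathfrak I(S_{t_s}w_s)\ge\mathfrak I(u_0)$, by minimality on $\mathfrak M$;
\item $\mathfrak I(u_0)\ge\mathfrak I(S_{t_s}u_0)$, by maximality of $t=1$ on the fiber of $u_0$, since $t_{u_0}=1$ by the uniqueness in (A1).
\end{itemize}
Then (S3) turns the $\mathfrak A$-part exactly into $\alpha(t_s)\bigl(\mathfrak A(w_s)-\mathfrak A(u_0)\bigr)$. Note that you need $\alpha(t_s)>0$, true for small $s$, before inserting the convexity bound. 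The identity $\mathfrak B(S_tw)=\mathfrak B(w)+\int_1^t\mathfrak R(\tau,w)\,d\tau$, together with local boundedness of $\partial_u\mathfrak R$ near $(1,u_0)$, reduces the effect of rescaling to an error $O(s|t_s-1|)=o(s)$. So only $t_s\to1$, i.e.\ (A2), is needed, and no rate for $|t_s-1|$. This is the familiar sandwich argument from the smooth Nehari-manifold method, transplanted to the nonsmooth setting via homogeneity and convexity of $\mathfrak A$.

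\textbf{Comparison.} Your approach removes the case analysis entirely. It shows that (A3) is superfluous, and that only continuity of $\alpha$ at $1$ is used rather than $\alpha\in\mathcal{C}^2$ with $\alpha'>0$. Consequently, the verification of (A3) in the Born--Infeld--Choquard applications and in the Nehari--Pankov application could be dropped. The paper's argument additionally yields the transversality estimate $|1-t_n|=O(\tau_n)$, but the stated conclusion does not require it. Your closing fallback via the implicit function theorem is therefore unnecessary.
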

	
	\begin{proof}
		Let $u\in X$. If $u\notin D(\mathfrak A)$, then $\mathfrak A(u)=+\infty$ and the conclusion is satisfied. Hence we may assume that $u\in D(\mathfrak A)$.
		
		Let $\tau_n\to0^+$ and set
		\[
		v_n:=(1-\tau_n)u_0+\tau_n u.
		\]
		By the convexity of $D(\mathfrak A)$, we have $v_n\in D(\mathfrak A)$. We may also assume that \(v_n\neq0\). By {\rm (A1)}, let
		\[
		t_n:=t_{v_n},
		\qquad
		w_n:=S_{t_n}v_n\in\mathfrak M.
		\]
		Since $u_0$ minimizes $\mathfrak I$ on $\mathfrak M$, by (A1) we obtain
		\[
		0\leq \mathfrak I(w_n)-\mathfrak I(u_0)=\mathfrak I(w_n)-\mathfrak I(v_n)
		+
		\mathfrak I(v_n)-\mathfrak I(u_0).
		\]
		Therefore, by the convexity of $\mathfrak A$ and the differentiability of $\mathfrak B$, we get
		\begin{equation}\label{minimization equality}
			\begin{aligned}
				\mathfrak I(v_n)-\mathfrak I(w_n) \leq \mathfrak I(v_n)-\mathfrak I(u_0) &=
				\mathfrak A(v_n)-\mathfrak A(u_0)-\mathfrak B(v_n)+\mathfrak B(u_0)\\
				&\leq
				\tau_n\big(\mathfrak A(u)-\mathfrak A(u_0)\big)
				-\tau_n\mathfrak B'(u_0)(u-u_0)+o(\tau_n).
			\end{aligned}
		\end{equation}
		On the other hand, from the definition of fiber map $\gamma_u$ and mean value theorem, one has 
		\begin{equation*}
			\mathfrak I(v_n)-\mathfrak I(w_n) = \int_{t_n}^{1}\frac{d}{dt} \gamma_{v_n}(t)\, dt = (1-t_n)\left.\frac{d}{dt}\gamma_{v_n}(t)\right|_{t=\xi_n},
		\end{equation*}
		where $\xi_n$ lies between $t_n$ and 1. Therefore, dividing \eqref{minimization equality} by $\tau_n$, we obtain
		\begin{equation}\label{quatio inequality}
			\frac{1- t_n}{\tau_n}\left.\frac{d}{dt}\gamma_{v_n}(t)\right|_{t=\xi_n}\leq \mathfrak A(u)-\mathfrak A(u_0)-\mathfrak B'(u_0)(u-u_0) +o(1).
		\end{equation}
		From assumption {\rm (A2)}, it is clear that $t_n\to 1 $ as $n\to \infty$. Since $\xi_n$ lies between $t_n$ and $1$, we also have $\xi_n\to1$. By the lower semicontinuity of $\mathfrak A$,
		\[
		\mathfrak A(u_0)\leq \liminf_{n\to\infty}\mathfrak A(v_n).
		\]
		On the other hand, the convexity of $\mathfrak A$ gives
		\[
		\limsup_{n\to\infty}\mathfrak A(v_n)\leq
		\limsup_{n\to\infty}\big((1-\tau_n)\mathfrak A(u_0)+\tau_n\mathfrak A(u)\big)=\mathfrak A(u_0).
		\]
		Thus $\mathfrak A(v_n)\to\mathfrak A(u_0)$. Together with the continuity of $\mathfrak R$, this gives
		\begin{equation}\label{eq:appendgamma}
			\lim_{n\to \infty}\left.\frac{d}{dt}\gamma_{v_n}(t)\right|_{t=\xi_n}=\lim_{n\to \infty} (\alpha'(\xi_n)\mathfrak A(v_n)-\mathfrak R(\xi_n,v_n))=\mathfrak P(u_0)=0.
		\end{equation}
		We first show that
		\begin{equation}\label{upper bound for quotient}
			\limsup_{n\to\infty}\frac{1-t_n}{\tau_n}<+\infty.
		\end{equation}
		Indeed, suppose by contradiction that, up to a subsequence,
		\[
		\frac{1-t_n}{\tau_n}\to+\infty.
		\]
		Then $t_n<1$ for all large $n$ and $\tau_n/(1-t_n)\to0$. Since
		$w_n=S_{t_n}v_n\in\mathfrak M$, by (A1), 
		\begin{equation}\label{eq:appengamma}
			\frac{d}{dt}\gamma_{v_n}(t)|_{t=t_n}=0,
		\end{equation} and we obtain
		\[
		\mathfrak I(w_n)=\mathfrak E(t_n,v_n),
		\qquad
		\mathfrak I(u_0)=\mathfrak E(1,u_0),
		\]
		where
		\[
		\mathfrak E(t,u):=
		\frac{\alpha(t)}{\alpha'(t)}\mathfrak R(t,u)-\mathfrak B(S_tu).
		\]
		Notice that, under the preceding assumptions, the map $\mathfrak{E}$ is of class \(\mathcal C^1\) on \((0,+\infty)\times X\). Moreover
		using the minimality of $u_0$ on $\mathfrak M$, and $v_n-u_0=\tau_n(u-u_0)$, we obtain
		\begin{equation*}
			\begin{aligned}
				0&\leq \mathfrak I(w_n)-\mathfrak I(u_0)
				=\mathfrak E(t_n,v_n)-\mathfrak E(1,u_0)\\
				&=(t_n-1)\left.\frac{d}{dt}\mathfrak E(t,u_0)\right|_{t=1}
				+o(|t_n-1|)+O(\tau_n).
			\end{aligned}
		\end{equation*}
		Moreover,
		\[
		\left.\frac{d}{dt}\mathfrak E(t,u_0)\right|_{t=1}
		=\alpha(1)\left.\frac{d}{dt}\left(\frac{\mathfrak R(t,u_0)}{\alpha'(t)}\right)\right|_{t=1}>0
		\]
		by {\rm (A3)}. Dividing the preceding inequality by $1-t_n>0$ and
		letting $n\to\infty$, we get a contradiction. Hence
		\eqref{upper bound for quotient} holds. Therefore, either $\{(1-t_n)/\tau_n\}_{n\in\N}$ is bounded or
		\begin{equation}\label{explosion}
			\liminf_{n\to \infty}\frac{1-t_n}{\tau_n}= -\infty.
		\end{equation}
		If the first case holds, then the conclusion is obtained from \eqref{quatio inequality} and \eqref{eq:appendgamma}. Suppose that \eqref{explosion} holds. Without loss of generality, we assume that $t_n>1$ for all $n\in\N$. Set
		\[
		G(t,v):=\frac{\mathfrak R(t,v)}{\alpha'(t)},\quad\hbox{for }t>0,\;v\in X.
		\]
		In view of \eqref{eq:appengamma}, we have
		\[
		0=\left.\frac{d}{dt}\mathfrak I(S_tv_n)\right|_{t=t_n}
		=\alpha'(t_n)\mathfrak A(v_n)-\mathfrak R(t_n,v_n),
		\]
		and hence
		\[
		\mathfrak A(v_n)=G(t_n,v_n).
		\]
		Assume first that $\mathfrak A(v_n)\leq \mathfrak A(u_0)$ for all $n\in\mathbb{N}$. Then by $u_0\in \mathfrak M$ and the preceding identity, we get from definition of Poho\v{z}aev set that
		\begin{equation*}
			G(1,u_0)=\frac{\mathfrak R(1,u_0)}{\alpha'(1)} = \mathfrak A(u_0) \geq \mathfrak A(v_n) = \frac{\mathfrak R(t_n,v_n)}{\alpha'(t_n)}
			=G(t_n,v_n).
		\end{equation*}
		Noticing that $t_n\to1$, $v_n\to u_0$ in $X$, and $\tau_n/(t_n-1)\to0$ along this subsequence, we obtain
		\begin{equation*}
			\begin{aligned}
				0&\geq G(t_n,v_n)-G(1,u_0)\\
				&=G(t_n,u_0)-G(1,u_0)+G(t_n,v_n)-G(t_n,u_0)\\
				&=(t_n-1)\left.\frac{d}{dt}G(t,u_0)\right|_{t=1}+o(t_n-1)+O(\tau_n).
			\end{aligned}
		\end{equation*}
		Dividing by $t_n-1>0$ and passing to the limit, we obtain
		\[
		0\geq \left.\frac{d}{dt}G(t,u_0)\right|_{t=1},
		\]
		which contradicts {\rm (A3)}. Thus, up to a subsequence, we may assume that $\mathfrak A(v_n)>\mathfrak A(u_0)$. In this case, by convexity of $\mathfrak A$, we have
		\begin{equation*}
			0\leq \mathfrak A(v_n)-\mathfrak A(u_0) \leq \tau_n \mathfrak A(u)+ (1-\tau_n)\mathfrak A(u_0) -\mathfrak A(u_0)=\tau_n\left(\mathfrak A(u)- \mathfrak A(u_0)\right).
		\end{equation*}
		Thus, we get the following estimate:
		\begin{equation*}
			\begin{aligned}
				0&\leq \frac{\mathfrak A(v_n)-\mathfrak A(u_0)}{\tau_n} \\
				&= \frac{G(t_n,v_n)-G(1,u_0)}{\tau_n}\\
				&= \frac{t_n-1}{\tau_n}\left(\left.\frac{d}{dt}G(t,u_0)\right|_{t=1}+o(1)\right)+O(1)\\
				& \leq \mathfrak A(u)- \mathfrak A(u_0)
			\end{aligned}
		\end{equation*}
		for all large $n$. By {\rm (A3)}, this implies
		\[
		\limsup_{n\to\infty} \frac{t_n-1}{\tau_n} < +\infty,
		\]
		which contradicts \eqref{explosion}. Thus, we complete the entire proof.
	\end{proof}

	\subsection{Application to the Nehari-Pankov constraint}
	
	We recall a model case related to the Nehari-type construction used by
	Pankov \cite{Pankov} and Szulkin, Weth \cite{SW}. Let us consider the {\em nonlinear Schr\"odinger equation}
	\begin{equation}\label{eq:Schr}
		-\Delta u+V(x)u=f(x,u)
		\qquad\text{in }\mathbb R^N.
	\end{equation}
	The {\em energy functional} associated with the problem 
	$ I:H^1(\R^N)\to\R$ is defined by
	$ I(u)=\frac12 Q(u)-\int_{\R^N}F(x,u)\,dx,$ 
	where the quadratic form is 
	$$
	Q(u):=
	\int_{\RN}\left(|\nabla u|^2+V(x)|u|^2\right)\,dx,\hbox{
		for }u\in H^1(\RN).$$
	We set
	$
	F(x,s):=\int_0^s f(x,\tau)\,d\tau
	$
	and assume the following assumptions.
	\begin{enumerate}[label={\rm (SW\arabic*)}]
		\item $V$ is continuous, $1$-periodic in $x_1,\ldots,x_N$, and
		\[
		0\notin \sigma(-\Delta+V),
		\]
		where $\sigma(-\Delta+V)$ denotes the spectrum of $-\Delta+V$.
		
		\item $f$ is continuous in $x$, and of $\cC^1$ class in $s$.  Moreover
		there exist $a>0$ and $p\in(2,2^*)$ such that
		\[
		|\partial_s f(x,s)|\leq C\left(1+|s|^{p-2}\right)
		\qquad
		\text{for all } (x,s)\in\mathbb R^N\times\mathbb R.
		\]
		
		\item
		$
		f(x,u)=o(u)$ as $|u|\to 0,
		$
		uniformly in $x\in\mathbb R^N$.
		
		\item
		$
		\frac{F(x,u)}{u^2}\to+\infty$ as $|u|\to+\infty,
		$
		uniformly in $x\in\mathbb R^N$.
		
		\item For every $x\in\mathbb R^N$, the map
		\[
		u\mapsto \frac{f(x,u)}{|u|}
		\]
		is strictly increasing on $(-\infty,0)$ and on $(0,+\infty)$.
	\end{enumerate}

	In view of {\rm (SW1)}, there are closed subspaces $E^+$ and $E^-$ such that $H^1(\R^N)=E^+\oplus E^-$ and $ Q$ is positive definite on $E^+$ and negative definite on $E^-$. Moreover
	$$ Q(u)=\|u^+\|^2-\|u^-\|^2\hbox{ for }u=u^++u^-\in E^+\oplus E^-.$$
	As in \cite{SW,Pankov}, $\mathfrak{I}$ is of class $\cC^1$ and we introduce the following Nehari-Pankov manifold
	\[
	\mathcal N
	:=
	\left\{
	u\in H^1(\mathbb R^N)\setminus\{0\}:
	\int_{\mathbb R^N}\bigl(|\nabla u|^2+V(x)u^2\bigr)\,dx
	=
	\int_{\mathbb R^N}f(x,u)u\,dx,\;\hbox{and } I'(u)|_{E^-}=0
	\right\}.
	\]
	We show the main result of the application of our abstract Lemma \ref{abstract critical point}.
	\begin{theorem}\label{ThNehariPankov}
		Suppose that {\rm (SW1)--(SW5)} are satisfied. 
		Then any $u_0\in\cN$ satisfying
		\[
		I(u_0)=\inf_{u\in\mathcal N} I(u)
		\]
		is a weak solution of \eqref{eq:Schr}.
	\end{theorem}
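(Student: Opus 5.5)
We will deduce Theorem \ref{ThNehariPankov} from Lemma \ref{abstract critical point}, with the scaling replaced by a Nehari--Pankov fibering in the direction of $u_0$. Take $X=H^1(\RN)$ and $\mathfrak A(u):=\frac12\|u^+\|^2$, which is convex, continuous and finite everywhere, so $D(\mathfrak A)=X$ is convex. Take $\mathfrak B(u):=\frac12\|u^-\|^2+\int_{\RN}F(x,u)\,dx$, which is of class $\cC^1$ by (SW2). Then $I=\mathfrak A-\mathfrak B$.

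A plain dilation $S_tu=tu$ only encodes the Nehari condition, not $I'(u)|_{E^-}=0$. We therefore first remove the $E^-$ direction, following Szulkin--Weth \cite{SW}. For $u\notin E^-$, the map $E^-\ni w\mapsto I(u^++w)$ is strictly concave and anticoercive, so it has a unique maximizer $h(u^+)$, depending on $u$ only through $u^+$. By the implicit function theorem and the $\cC^2$-type regularity of $\Phi$ given by (SW2), $h$ is $\cC^1$ away from $0$. On the reduced space $E^+\setminus\{0\}$ the functional $J(v):=I(v+h(v))$ is $\cC^1$, and $\cN$ is exactly $\{v+h(v): v\in E^+\setminus\{0\},\ J'(v)v=0\}$. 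We apply Lemma \ref{abstract critical point} to $J$ on the Banach space $X=E^+$, with $\mathfrak A(v)=\frac12\|v\|^2$, $\mathfrak B(v)=\mathfrak A(v)-J(v)$ and $S_tv=tv$. Then (S1)--(S3) hold with $\alpha(t)=t^2$, and $\mathfrak R(t,v)=\frac{d}{dt}\mathfrak B(tv)$ is $\cC^1$ because $\mathfrak B$ is $\cC^1$ on $E^+\setminus\{0\}$. The abstract proof only uses $v_n$ near $u_0^+\neq0$, so working on $E^+\setminus\{0\}$ is harmless.

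The hypotheses are checked as follows.
\begin{itemize}
\item \textbf{(A1)--(A2).} These are exactly the statements of \cite{SW}. By (SW3)--(SW5), for each $v\in E^+\setminus\{0\}$ the map $t\mapsto J(tv)$ is positive for small $t$, tends to $-\infty$ by (SW4), and has a unique critical point $t_v$, which is its maximum. The map $v\mapsto t_v$ is continuous.
\item \textbf{Minimizer.} $u_0^+$ minimizes $J$ on the reduced Nehari set, since $I(u_0)=J(u_0^+)$.
\item \textbf{(A3).} Here $\mathfrak R(t,v)/\alpha'(t)=\frac{1}{2t}\frac{d}{dt}\mathfrak B(tv)$, and (A3) requires $\frac{d}{dt}\big(t^{-1}\frac{d}{dt}\mathfrak B(tv)\big)|_{t=1}>0$ at $v=u_0^+$. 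We expect this to follow from the strict monotonicity (SW5): $t^{-1}\int f(x,t u)u$ is strictly increasing, and the concave $E^-$ part is handled via the maximality of $h$. We will verify it by expanding $\mathfrak B(tv)=\frac12\|h(tv)\|^2+\int F(x,tv+h(tv))$ and using the first-order condition for $h$, which kills the derivative of the $h$-variation.
\end{itemize}

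We expect (A3) to be the \textbf{main obstacle}. The strict inequality requires a second-order computation through $h$, and we only have a derivative $\partial_s f$ rather than a clean homogeneity. If the direct computation is messy, we will use the strict-increase argument of \cite[Proposition 39]{SW}: $J(tv)<J(v)$ for $t\neq1$, with a quantitative second-derivative bound coming from the $\cC^1$ assumption on $f$.

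Once Lemma \ref{abstract critical point} applies, $u_0^+$ satisfies $\mathfrak A(v)-\mathfrak A(u_0^+)-\mathfrak B'(u_0^+)(v-u_0^+)\geq0$ for all $v$. Since $\mathfrak A$ is differentiable, replacing $v$ by $u_0^++\tau w$ and letting $\tau\to0^\pm$ gives $J'(u_0^+)=0$ on $E^+$. Together with $I'(u_0)|_{E^-}=0$, which holds because $u_0\in\cN$, and the chain rule for $J(v)=I(v+h(v))$, this yields $I'(u_0)=0$. Hence $u_0$ is a weak solution of \eqref{eq:Schr}.
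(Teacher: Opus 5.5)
Your reduction is the paper's own: you pass to $X=E^+$ through the $E^-$-maximizer (your $h$ is the paper's $w$), take $\mathfrak B(v)=\frac12\|h(v)\|^2+\int_{\RN}F(x,v+h(v))\,dx$, $S_tv=tv$ and $\alpha(t)=t^2$, import (A1)--(A2) from Szulkin--Weth, and finish with the chain rule. The gap is (A3), the only new ingredient, which you leave at ``we expect''. Since $\mathfrak P(u_0^+)=0$ means $\mathfrak R(1,u_0^+)=\|u_0^+\|^2$, one has
\[
\left.\frac{d}{dt}\frac{\mathfrak R(t,u_0^+)}{2t}\right|_{t=1}=\frac12\big(\partial_t\mathfrak R(1,u_0^+)-\|u_0^+\|^2\big)=-\frac12\,\gamma_{u_0^+}''(1).
\]
So (A3) is exactly the nondegeneracy $\gamma''_{u_0^+}(1)<0$ of the fiber maximum. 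None of your suggested routes gives this strict inequality:
\begin{itemize}
\item Your fallback, $J(tv)<J(v)$ for $t\neq1$, yields only $\gamma''_{u_0^+}(1)\le0$. The same holds for any ``strictly increasing quotient'' argument, since a strictly increasing function can have zero derivative.
\item The first-order condition for $h$ removes $h'$ from $\mathfrak R(t,v)=\int_{\RN}f(x,tv+h(tv))v\,dx$. It does not remove it from $\partial_t\mathfrak R(1,v)=\int_{\RN}f_s(x,v+h(v))\,(v+h'(v)[v])\,v\,dx$.
\item The monotonicity of $t\mapsto t^{-1}\int_{\RN}f(x,tu)u\,dx$ coming from (SW5) gives no control here, because $h(tv)\neq t\,h(v)$.
\end{itemize}

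The missing idea is a second-order identity combined with a definiteness argument. Set $q:=u_0^++h'(u_0^+)[u_0^+]$. Differentiating $I'(tv+h(tv))|_{E^-}=0$ at $t=1$, $v=u_0^+$ gives $I''(u_0)[q,\phi]=0$ for all $\phi\in E^-$. Hence $\gamma''_{u_0^+}(1)=I''(u_0)[q,u_0^+]=I''(u_0)[q,q]$, with $0\neq q\in\R u_0\oplus E^-$.

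One then shows that $I''(u_0)$ is negative definite on $\R u_0\oplus E^-$. Take $z=\tau u_0+\phi$ and use the Nehari--Pankov relations $Q(u_0,u_0)=\int_{\RN}f(x,u_0)u_0\,dx$, $Q(u_0,\phi)=\int_{\RN}f(x,u_0)\phi\,dx$ and $Q(\phi,\phi)=-\|\phi\|^2$. These give
\[
I''(u_0)[z,z]=-\|\phi\|^2-\int_{\RN}\Big(f_s(x,u_0)-\frac{f(x,u_0)}{u_0}\Big)(\tau u_0+\phi)^2\,dx-\int_{\RN}\frac{f(x,u_0)}{u_0}\,\phi^2\,dx .
\]
By (SW3) and (SW5), $f_s(x,s)\ge f(x,s)/s\ge0$, so every term is nonpositive. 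Strictness comes from $-\|\phi\|^2$ when $\phi\neq0$, and from $\int_{\RN}\big(f_s(x,u_0)u_0^2-f(x,u_0)u_0\big)\,dx>0$ when $\phi=0$.

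This is where (SW2) is really needed, since you need $I\in\cC^2$. It also corrects a smaller slip in your write-up. $\mathfrak R(t,v)=\mathfrak B'(tv)[v]$ is not $\cC^1$ merely because $\mathfrak B$ is $\cC^1$. It is $\cC^1$ because $\mathfrak B'(v)=\Phi'(v+h(v))|_{E^+}$, with $\Phi(u):=\int_{\RN}F(x,u)\,dx$ of class $\cC^2$ and $h\in\cC^1$.
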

	
	We shall give the proof after recalling some preliminary properties.
	
	Let 
	$\mathfrak W:H^1(\R^N)\to\R$ be given by $$\mathfrak W(u):=\frac12 \|u^-\|^2+\int_{\R^N}F(x,u)\,dx,\hbox{ for }u\in H^1(\R^N).$$
	Observe that $\mathfrak W$ is continuous, strictly convex (see \cite[Lemma 2.2]{SW}) and coercive. Then, let
	$w(u)\in E^-$ be a unique minimizer of 
	$$E^-\ni w\mapsto \mathfrak W(u+w)\in\R.$$
	One can show that the map $u\mapsto \mathfrak W(u+w(u))$ is of class $\cC^1$, see e.g.  (i)--(iii) in the proof of \cite[Theorem 4.4]{BM_JFA}. Moreover, it follows that
	\begin{equation}\label{eq:w-characterization}
		\mathfrak{W}'(u+ w(u))[v] =  \langle w(u), v\rangle + \irn f(x,u+w(u))v\, dx =0 \qquad\hbox{for }v\in E^-.
	\end{equation}
	We set now,
	$X:=E^+$, $\mathfrak A(u)=\frac12 \|u^+\|^2$, $\mathfrak B(u):=\mathfrak W(u+w(u))$ and functional $\mathfrak{I}: X\to\R$ defined by $\mathfrak{I}:= \mathfrak{A}-\mathfrak{B}$. 
	\begin{lemma}\label{lem:w-C1} 
		The following statements hold:
		\begin{enumerate}[label={\rm (\roman*)}, ref={\rm (\roman*)}]
			\item $w:X\to E^-$ is of $\cC^1$-class,
			\item The functional $\mathfrak{B}: X\to \R$ is of $\cC^1$ class,
			\item $\mathfrak{B}'(u) = \left.\mathfrak{W}'(u+w(u))\right|_{X}: X\to \R$ for every $u\in X$.
		\end{enumerate}
	\end{lemma}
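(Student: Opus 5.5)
The plan is to obtain all three statements from the implicit function theorem applied to the equation characterizing $w(u)$, namely \eqref{eq:w-characterization}. Define
\[
\mathcal G:X\times E^-\to (E^-)^*,\qquad \mathcal G(u,w):=\left.\mathfrak W'(u+w)\right|_{E^-},
\]
so that $\mathcal G(u,w)[v]=\langle w,v\rangle+\irn f(x,u+w)v\,dx$ for $v\in E^-$. Here $\langle\cdot,\cdot\rangle$ is the inner product on $E^-$ associated with $\|\cdot^-\|^2$. By (SW2) and the standard Nemytskii estimates on $H^1(\RN)$ with $p\in(2,2^*)$, the functional $u\mapsto\irn F(x,u)\,dx$ is of class $\cC^2$ on $H^1(\RN)$. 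Its second derivative is $(h,k)\mapsto\irn\partial_sf(x,u)hk\,dx$. Hence $\mathfrak W\in\cC^2(H^1(\RN))$ and $\mathcal G$ is $\cC^1$. By strict convexity and coercivity of $\mathfrak W$ on the affine space $u+E^-$, we have $\mathcal G(u,w)=0$ if and only if $w=w(u)$.

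\textbf{Step (i).} Compute the partial derivative
\[
\partial_w\mathcal G(u,w(u))[h][v]=\langle h,v\rangle+\irn\partial_sf(x,u+w(u))hv\,dx,\qquad h,v\in E^-.
\]
To invert it I need $\partial_sf\ge0$. This is the main obstacle, since (SW5) only states that $s\mapsto f(x,s)/|s|$ is strictly increasing. The route I would try first is this: (SW5) together with (SW3) gives $f(x,s)s\ge0$, and the monotonicity of $f(x,s)/|s|$ gives, for $s>0$, $\partial_sf(x,s)\ge f(x,s)/s\ge0$; the case $s<0$ is analogous, since there $\partial_s f(x,s) = \partial_s\big(|s|\cdot f(x,s)/|s|\big)$ and both factors contribute nonnegatively. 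Granting $\partial_sf\ge0$, the bilinear form is coercive on $E^-$: $\partial_w\mathcal G[h][h]\ge\|h\|^2$. The Lax--Milgram theorem then shows that $\partial_w\mathcal G(u,w(u))$ is an isomorphism $E^-\to(E^-)^*$. The implicit function theorem yields a $\cC^1$ local solution $\tilde w$ near each $u$. By uniqueness of the zero of $\mathcal G(u,\cdot)$, $\tilde w$ coincides with $w$, so $w\in\cC^1(X,E^-)$. If the pointwise sign argument failed, I would fall back on the convexity of $\mathfrak W$, which already gives $\partial_w\mathcal G\ge0$, and then use the $\|h\|^2$ term for coercivity.

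\textbf{Steps (ii)--(iii).} We have $\mathfrak B=\mathfrak W\circ(\mathrm{id}+w)$, a composition of $\cC^1$ maps by (i), hence $\mathfrak B\in\cC^1(X,\R)$. The chain rule gives
\[
\mathfrak B'(u)[v]=\mathfrak W'(u+w(u))[v+w'(u)v].
\]
Since $w'(u)v\in E^-$, the second contribution vanishes by \eqref{eq:w-characterization}. This leaves $\mathfrak B'(u)=\mathfrak W'(u+w(u))|_X$, which is (iii).
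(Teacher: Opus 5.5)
Your proposal is correct and follows the paper's proof. You apply the implicit function theorem to $\mathcal G(u,w)=\mathfrak W'(u+w)|_{E^-}$ and invert $\partial_w\mathcal G(u,w(u))$ via Lax--Milgram using $\partial_s f\ge0$, which you derive from (SW3) and (SW5) where the paper only asserts it; you also justify that the local implicit branch coincides with $w$. For (ii)--(iii), your chain-rule computation for $\mathfrak W\circ(\mathrm{id}+w)$ combined with \eqref{eq:w-characterization} is a direct, self-contained replacement for the paper's reference to \cite[Theorem 4.4]{BM_JFA}. Your fallback remark is unnecessary and would not work as stated: convexity of $\mathfrak W$ alone only yields $\|h\|^2+\irn \partial_s f\,h^2\,dx\ge0$, not coercivity.
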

	\begin{proof}
		Define
		\[
		G:E^+\times E^-\to (E^-)',
		\qquad
		G(u,w)[v]
		:=
		\langle w,v\rangle
		+
		\int_{\mathbb R^N}f(x,u+w)v\,dx .
		\]
		Then \eqref{eq:w-characterization} is equivalent to
		\[
		G(u,w(u))=0.
		\]
		By the assumptions on \(f\), the map \(G\) is of class \(\cC^1\), and
		\[
		G''(u,w)[\eta][v]
		=
		\langle \eta,v\rangle
		+
		\int_{\mathbb R^N}f_s(x,u+w)\eta v\,dx,
		\qquad \eta,v\in E^-.
		\]
		Moreover, by {\rm (SW3)} and {\rm (SW5)}, one has \(f_s(x,s)\ge0\). Therefore
		\[
		G''(u,w(u))[\eta][\eta]
		=
		\|\eta\|^2
		+
		\int_{\mathbb R^N}f_s(x,u+w(u))\eta^2\,dx
		\ge
		\|\eta\|^2
		\qquad\forall \eta\in E^-.
		\]
		Thus \(G''(u,w(u)):E^-\to(E^-)'\) is an isomorphism by the
		Lax--Milgram theorem. Hence the implicit function theorem applies to
		\(G(u,w)=0\), and we obtain $w\in \cC^1(X,E^-)$, so that (i) holds. Finally, one can follow the proof of \cite[Theorem 4.4]{BM_JFA} to show (ii) and (iii).
	\end{proof}
	Let us define
	\[
	S_tu:=tu,\quad t>0.
	\]	
	Note that
	$
	\mathfrak A
	$
	is proper, convex, continuous, and therefore lower semicontinuous. Moreover,$D(\mathfrak A)=E^+$ is convex.
	For $t>0$ and $u\in X$, we have
	\[
	\mathfrak A(S_tu)
	=
	\mathfrak A(tu)
	=
	t^2\mathfrak A(u).
	\]
	Thus in Lemma \ref{abstract critical point} we take
	\[
	\alpha(t)=t^2,
	\qquad
	\alpha'(t)=2t,
	\qquad 
	\gamma_u(t) = \mathfrak{I}(S_t u)=t^2\mathfrak A(u)- \mathfrak{B}(S_t u)
	\]
	Clearly, $S_1u=u$, $S_tu\neq0$ whenever $u\neq0$, and
	\[
	S_{tr}u=tr\,u=S_t(S_ru)
	\qquad
	\text{for all }t,r>0.
	\]
	Therefore {\rm (S1)}--{\rm (S3)} in Lemma \ref{abstract critical point}
	are satisfied. Then from Lemma \ref{lem:w-C1} we have 
	\begin{equation*}
		\mathfrak R(t,u):= \frac{d}{dt} \mathfrak{B}(S_t u)=\mathfrak{W}'(tu+w(tu))[u] =\irn f(x,tu+w(tu))u\, dx,\qquad\hbox{ for }u\in X
	\end{equation*}
	Thus, as in Lemma \ref{abstract critical point}, we can define the following functional by
	\begin{equation*}
		\mathfrak{P}(u):= 2\mathfrak{A}(u) - \irn f(x,u+w(u))u\, dx,
	\end{equation*}
	and the following constraint by
	\begin{equation*}
		\mathfrak{M}:= \{u\in X\setminus\{0\}: \mathfrak{P}(u)=0\}.
	\end{equation*}
	\begin{lemma}\label{lem:homeomorphism}
		The map $m:\mathfrak M \to \cN$ defined by $u\mapsto u+ w(u)$ is a homeomorphism between $\mathfrak{M}$ and the Nehari-Pankov constraint $\mathcal{N}$. Moreover, $u_0=u_0^++u_0^-$ is a minimizer of $I$ on $\mathcal N$ if and only if $u_0^+\in\mathfrak M$ is a minimizer of $\mathfrak{J}$ on $\mathfrak M$.
	\end{lemma}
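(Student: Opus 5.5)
The plan is to show that $m$ is well defined, bijective and bicontinuous, and that it preserves energy in the sense that $I(u+w(u))=\mathfrak I(u)$ for $u\in X=E^+$. Here I read $\mathfrak J$ as $\mathfrak I$. The minimizer correspondence then follows at once.

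\emph{Energy identity.} For $u\in E^+$, $w\in E^-$ we have $I(u+w)=\frac12\|u\|^2-\frac12\|w\|^2-\int F(x,u+w)\,dx=\mathfrak A(u)-\mathfrak W(u+w)$. Hence
\[
\mathfrak I(u)=\mathfrak A(u)-\mathfrak W(u+w(u))=I(u+w(u))=\max_{w\in E^-}I(u+w).
\]
The maximum is unique because $\mathfrak W$ is strictly convex.

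\emph{$m$ maps $\mathfrak M$ into $\cN$.} Take $u\in\mathfrak M$ and set $z=u+w(u)$. By \eqref{eq:w-characterization}, $\langle w(u),v\rangle+\int f(x,z)v\,dx=0$ for all $v\in E^-$. Since $I'(z)v=-\langle w(u),v\rangle-\int f(x,z)v$ on $E^-$, this gives $I'(z)|_{E^-}=0$. Testing with $v=w(u)$ gives $\|w(u)\|^2=-\int f(x,z)w(u)$. Combining this with $\mathfrak P(u)=0$, i.e. $\|u\|^2=\int f(x,z)u$, we obtain
\[
Q(z)=\|u\|^2-\|w(u)\|^2=\int f(x,z)(u+w(u))\,dx,
\]
which is the Nehari identity. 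Finally $z\ne0$ because $u\ne0$. So $z\in\cN$.

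\emph{$m$ is onto and has a continuous inverse.} Let $z=z^++z^-\in\cN$. Then $I'(z)|_{E^-}=0$ means $w\mapsto\mathfrak W(z^++w)$ has a critical point at $z^-$. By strict convexity this critical point is its unique minimizer, so $z^-=w(z^+)$. Reversing the computation above, the Nehari identity together with the $E^-$ identity yields $\mathfrak P(z^+)=0$. It remains to check $z^+\neq0$. If $z^+=0$, then $z=z^-\in E^-$ and Nehari gives $-\|z^-\|^2=\int f(x,z^-)z^-\ge0$, since (SW3)--(SW5) imply $f(x,s)s\ge0$. This forces $z=0$, a contradiction. Thus $m^{-1}(z)=z^+$, which is the restriction of the continuous projection $P^+$. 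Continuity of $m$ follows from Lemma \ref{lem:w-C1}(i), and injectivity is clear since $P^+\circ m=\mathrm{id}$.

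\emph{Minimizer correspondence.} By the energy identity, $I\circ m=\mathfrak I$ on $\mathfrak M$, and $m$ is a bijection onto $\cN$. Hence $\inf_{\cN}I=\inf_{\mathfrak M}\mathfrak I$, and $u_0$ minimizes $I$ on $\cN$ exactly when $u_0^+$ minimizes $\mathfrak I$ on $\mathfrak M$, with $u_0^-=w(u_0^+)$. I do not expect a serious obstacle. The most delicate point is the identification $z^-=w(z^+)$, which needs the strict convexity of $\mathfrak W$ along $E^-$ from \cite[Lemma 2.2]{SW}, and the sign fact $f(x,s)s\ge0$ used to exclude $z^+=0$.
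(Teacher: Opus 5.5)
Your proof is correct and follows essentially the paper's route. You check $m(\mathfrak M)\subset\cN$ from \eqref{eq:w-characterization} and $\mathfrak P(u)=0$, identify $z^-=w(z^+)$ for $z\in\cN$ by strict convexity of $\mathfrak W$ along $E^-$, deduce $\mathfrak P(z^+)=0$, and conclude with $I(u+w(u))=\mathfrak I(u)$; beyond the paper, you also supply the bicontinuity of $m$ (via Lemma \ref{lem:w-C1}(i) and the projection onto $E^+$) and the nontriviality $z^+\neq0$ (via $f(x,s)s\ge0$), both of which the paper leaves implicit.
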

	\begin{proof}
		Indeed, for $u\in \mathfrak{M}$, there holds $\mathfrak{P}(u)=0$. Moreover, from \eqref{eq:w-characterization} we have 
		$$\mathfrak{W}'(u+w(u))[v]=I'(u+w(u))[v]=0$$ 
		for all $v\in E^-$. Plus $\mathfrak{P}(u)=0$ with \eqref{eq:w-characterization} yields that $I'(u+w(u))[u+w(u)]=0$. These imply that $u+w(u)\in \mathcal{N}$. Conversely, let \(z\in\mathcal N\) and write \(z=z^++z^-\). Since
		\(I'(z)|_{E^-}=0\), for every \(v\in E^-\) we have
		\[
		0=I'(z)[v]
		=
		-\langle z^-,v\rangle
		-
		\int_{\mathbb R^N}f(x,z)v\,dx.
		\]
		Equivalently,
		\[
		\mathfrak W'(z^+ + z^-)[v]=0
		\qquad\hbox{ for every }v\in E^-.
		\]
		Thus \(z^-\) is the unique minimizer of
		\[
		E^-\ni w\mapsto \mathfrak W(z^+ + w),
		\]
		and therefore $z^-=w(z^+)$.
		Hence \(z=z^+ + w(z^+)\). By \(z\in\mathcal N\), we also have
		\(I'(z)[z]=0\). Together with \(I'(z)[z^-]=0\), this gives
		\[
		0=I'(z)[z^+]
		=
		I'(z^+ + w(z^+))[z^+]
		=
		\mathfrak P(z^+).
		\]
		Therefore \(z^+\in\mathfrak M\).
		Moreover, since $I(u+w(u))= \mathfrak{I}(u)$, the correspondence of these two different minimizers immediately holds.
	\end{proof} 
	\begin{proof}[Proof of Theorem \ref{ThNehariPankov}]
		It remains to verify the hypotheses (A1)--(A3) of Lemma \ref{abstract critical point}. In view of \cite[Proposition 2.3]{SW}, we see that there exists a unique maximum point $t_u>0$ of the fiber map $\gamma_u(t):=\mathfrak I(S_tu)$ for every $u\in X$, so that (A1) holds. Moreover, $S_{t_u}u= t_u u\in \mathfrak{M}$. Next, one infer from \cite[Lemma 2.8]{SW} and Lemma \ref{lem:homeomorphism} that the map $u\mapsto t_u$ is continuous, which implies (A2). It remains to show (A3). Suppose $u_0=u_0^++u_0^-$ is a minimizer over $\mathcal N$ where $u_{0}^+\in X$ and $u_0^-= w(u_0^+)\in E^-$ from Lemma \ref{lem:homeomorphism}.  
		Differentiating \eqref{eq:w-characterization} at $ u_{0}^+ $ in the direction $ u_{0}^+ $, we get 
		\begin{equation}\label{eq:intersection}
			\langle w'( u_{0}^+ )[ u_{0}^+ ],v\rangle - \irn f_s(x, u_{0}^+ + w(u_{0}^+))(u_{0}^+ + Dw(u_{0}^+)[u_{0}^+ ])v\, dx=0
		\end{equation} 
		for all $v\in E^-$. Besides, we have
		\begin{equation}\label{eq:second derivative}
			\begin{aligned}
				\left.\frac{d^2}{dt^2}\gamma_{u_{0}^+}(t)\right|_{t=1} &= \mathfrak{A}(u_{0}^+) - \frac{d}{dt}\mathfrak{R}(1,u_{0}^{+})\\
				&= \lVert u_{0}^+\rVert^2 - \irn f_s (x, u_{0}^+ + w(u_{0}^+))(u_{0}^++Dw(u_{0}^+)[u_{0}^+])u_{0}^+.
			\end{aligned}
		\end{equation}
		Let $v= w'(u_{0}^+)[u_{0}^+ ]$ in \eqref{eq:intersection} and $q:= u_{0}^+ +v$. Then combining \eqref{eq:intersection} and \eqref{eq:second derivative} yields that 
		\begin{equation}\label{quadratic form}
			\left.\frac{d^2}{dt^2}\gamma_{u_{0}^+}(t)\right|_{t=1} = I''(u_0)[q,q].
		\end{equation} 
		We next show that $I''(u_0)$ is negative definite on $\R u_0\oplus E^-$. Let
		$h=\tau u_0+v$ with $\tau\in\R$ and $v\in E^-$. Since $u_0\in\cN$,
		\[
		I'(u_0)[u_0]=0,\qquad I'(u_0)[v]=0.
		\]
		Thus
		\[
		Q(u_0,u_0)=\irn f(x,u_0)u_0\,dx,\qquad
		Q(u_0,v)=\irn f(x,u_0)v\,dx,
		\]
		and $Q(v,v)=-\lVert v\rVert^2$. Therefore
		\[
		\begin{aligned}
			I''(u_0)[h,h]
			&=
			Q(h,h)-\irn f_s(x,u_0)h^2\,dx\\
			&=
			-\lVert v\rVert^2
			-\irn
			\left(f_s(x,u_0)-\frac{f(x,u_0)}{u_0}\right)
			(\tau u_0+v)^2\,dx
			-\irn \frac{f(x,u_0)}{u_0}v^2\,dx.
		\end{aligned}
		\]
		By {\rm (SW3)} and {\rm (SW5)}, we have $I''(u_0)[h,h]\leq 0$. Moreover, the strict
		differential monotonicity condition gives
		\[
		\irn
		\left(f_s(x,u_0)u_0^2-f(x,u_0)u_0\right)\,dx>0.
		\]
		Hence
		\[
		I''(u_0)[h,h]<0
		\qquad
		\hbox{for every }0\neq h\in \R u_0\oplus E^-.
		\]
		Applying this to \eqref{quadratic form} we obtain that \eqref{eq:second derivative} is strictly negative. Consequently,
		we arrive at 
		\begin{equation*}
			\begin{aligned}
				\frac{d}{dt}\left.\frac{\mathfrak{R}(t,u_{0}^+)}{\alpha'(t)}\right|_{t=1} &= \frac{d}{dt}\left.\frac{\irn f(x,tu_{0}^++w(tu_{0}^+))\, dx}{2t}\right|_{t=1} \\
				&= \frac 12\irn f_s(x,u_{0})(u_{0}^+ +Dw(u_{0}^+)[u_{0}^+])u_{0}^+\, dx -\frac 12 \irn f(x,u_{0}))u_{0}^+] \, dx\\
				& > \frac{1}{2} \lVert u_{0}^+\rVert^2 -\frac 12 \irn f(x,u_{0})u_{0}^+ \, dx\\
				&= \frac{1}{2}\left[\lVert u_0\rVert^2- \irn f(x,u_{0})u_{0}\, dx \right] \\
				&= I'(u_0)[u_0]=0,
			\end{aligned}
		\end{equation*}
		which is the required condition {\rm (A3)}. Thus, $u_0^+$ is a critical point of $\mathfrak I$ on $X= E^+$. Since $\mathfrak A$ is differentiable in
		this model case, this means
		\[
		I'(u_0^+)[v]=\mathfrak I'(u_0^+)[v]=0
		\qquad\hbox{for all }v\in E^+.
		\]
		By \eqref{eq:w-characterization}, we
		also have
		$\mathfrak I'(u_0)[\phi]=0$ for all $\phi\in E^-$.
		Combining the two relations and writing every $z\in H^1(\RN)$ as
		$z=z^++z^-$, we conclude that
		\[
		I'(u_0)[z]=\mathfrak I'(u_0)[z]=0\qquad\hbox{for all }z\in H^1(\RN).
		\]
		Thus $u_0$ is a weak solution of \eqref{eq:Schr}.
	\end{proof}

	The conclusion of Theorem \ref{ThNehariPankov} was proved in \cite{SW}
	under the additional assumption that \(f(\cdot,s)\) is \(1\)-periodic in
	\(x\), but under a weaker regularity hypothesis than our {\rm (SW2)}:
	namely, it was assumed only that \(f\) is continuous and satisfies
	\[
	|f(x,s)|\leq C\left(1+|s|^{p-1}\right).
	\]
	In that setting, Szulkin and Weth also established the existence of a
	ground state by means of Ekeland's variational principle. Our purpose here
	is different: we observe that the fact that any minimizer on the constraint
	is a critical point can be obtained by an alternative method based on Lemma
	\ref{abstract critical point}. The separate problem of proving the existence
	of such a minimizer may then be studied independently by other tools, for
	instance by concentration--compactness arguments. We also see potential for
	applying Lemma \ref{abstract critical point} to nonsmooth nonlinear
	Schr\"odinger equations; this will be addressed in future work.

\end{document}